\theoremstyle{plain}
\newtheorem{mainthm}{Theorem}
\newtheorem{maincor}{Corollary}
\newtheorem{thm}{Theorem}[subsection]
\newtheorem{cor}[thm]{Corollary}
\newtheorem{lem}[thm]{Lemma}
\theoremstyle{definition}
\newtheorem{dfn}[thm]{Definition}
\theoremstyle{remark}
\newtheorem{rem}[thm]{Remark}
\newtheorem*{exnonum}{Example}
\newtheorem*{exsnonum}{Examples}
\newcommand{\cntrs}{\setcounter{thm}{0}
  \renewcommand{\thethm}{\thesection.\Alph{thm}}}
\newcommand{\cntrsb}{\setcounter{thm}{0}
  \renewcommand{\thethm}{\thesubsection.\Alph{thm}}}
\newcommand{\Qed}{\hfill \qedsymbol \medskip}
\newcommand{\tdef}{{\textnormal{def}}}
\newcommand{\hooklongrightarrow}{\lhook\joinrel\longrightarrow}
\begin{document}

\title{The symplectic topology of projective manifolds with small dual}

\author{Paul Biran} \author{Yochay Jerby}

\date{\today}

\address{Paul Biran, Departement Mathematik, ETH, 8092 Z\"{u}rich, Switzerland}
\email{biran@math.ethz.ch}

\address{Yochay Jerby, Departement Mathematik, ETH, 8092 Z\"{u}rich, Switzerland}
 \email{yochay.jerby@math.ethz.ch}

\bibliographystyle{alphanum}

%
%

\maketitle

%
%

\begin{abstract}
   We study smooth projective varieties with small dual variety using
   methods from symplectic topology. For such varieties we prove that
   the hyperplane class is an invertible element in the quantum
   cohomology of their hyperplane sections. We also prove that the
   affine part of such varieties are subcritical. We derive several
   topological and algebraic geometric consequences from that. The
   main tool in our work is the Seidel representation associated to
   Hamiltonian fibrations.
\end{abstract}

\section{introduction and summary of the main results} \cntrs
\label{S:Intro}

In this paper we study a special class of complex algebraic manifolds
called {\em projective manifolds with small dual}.  A projectively
embedded algebraic manifold $X \subset \mathbb{C}P^N$ is said to have
small dual if the dual variety $X^{\ast} \subset
(\mathbb{C}P^N)^{\ast}$ has (complex) codimension $\geq 2$.  Recall
that the dual variety $X^*$ of a projectively embedded algebraic
manifold $X \subset {\mathbb{C}}P^N$ is by definition the space of all
hyperplanes $H \subset {\mathbb{C}}P^N$ that are not transverse to
$X$, i.e.
$$X^* = \{H \in ({\mathbb{C}}P^N)^* \mid H 
\textnormal{ is somewhere tangent to } X\}.$$ Let us mention that for
``most'' manifolds the codimension of $X^*$ is $1$, however in special
situations the codimension might be larger. To measure to which extent
$X$ deviates from the typical case one defines the
defect~\footnote{Some authors call this quantity {\em dual defect} to
  distinguish it from other ``defects'' appearing in projective
  geometry, such as {\em secant defect}, see
  e.g.~\cite{Landsberg:tensors, Zak:tangents-secants}. In this paper
  we will however stick to the wording ``defect'', which is attributed
  in~\cite{Ein:dual1, Ein:dual2} to A.~Landman.} of an algebraic
manifold $X \subset \mathbb{C}P^N$ by
$$\tdef(X) = \textnormal{codim}_{\mathbb{C}}(X^{\ast}) -1.$$ 
Thus we will call manifolds with small dual also {\em manifolds with
  positive defect}. Note that this is not an intrinsic property of
$X$, but rather of a given projective embedding of $X$.

The class of algebraic manifolds with small dual was studied by many
authors, for instance see~\cite{Ein:dual1, Ein:dual2,
  Gr-Ha:alg-geom-loc-diff, Kl:tangency, Sn:nef-val,
  Belt-Som:adjunction}, see also~\cite{Te:dual} for a nice survey.
The study of the relation between $X^*$ and the topology of $X$ (and
its hyperplane sections) had been initiated earlier
in~\cite{An-Fr:Lefschetz-2}.  These works show that manifolds with
small dual have very special geometry. In this paper we will show that
such manifolds also exhibit unique properties from the point of view
of symplectic topology.
 
Our main results are concerned with geometric properties of a smooth
hyperplane section $\Sigma \subset X$ of a manifold $X \subset
\mathbb{C}P^N$ with small dual, under the additional assumption that
$b_2(X)=1$. (Here and in what follows we denote by $b_j(X) = \dim
H^j(X;\mathbb{R})$ the $j$'th Betti-number of $X$.) By a well known
result of Ein~\cite{Ein:dual1} the assumption $b_2(X)=1$ implies that
both $X$ and $\Sigma$ are Fano manifolds.

For a space $Y$ we will denote from now on by $$H^*(Y) :=
H^*(Y;\mathbb{Z})/\textnormal{torsion}$$ the torsion-free part of the
integral cohomology $H^*(Y;\mathbb{Z})$. Denote by
$QH^*(\Sigma;\Lambda) = (H^{\bullet}(\Sigma) \otimes \Lambda)^*$ the
quantum cohomology ring of $\Sigma$ with coefficients in the Novikov
ring $\Lambda=\mathbb{Z}[q,q^{-1}]$ (see below for our grading
conventions), and endowed with the quantum product $*$. We prove:

\begin{mainthm} \label{mt:om-inv} Let $X \subset \mathbb{C}P^N$ be an
   algebraic manifold with small dual, $b_2(X)=1$ and
   $\dim_{\mathbb{C}}(X) \geq 2$. Let $\Sigma$ be a smooth hyperplane
   section of $X$. Let $\omega$ be the restriction of the Fubini-Study
   K\"{a}hler form of $\mathbb{C}P^N$ to $\Sigma$. Then
   $$[\omega] \in QH^2(\Sigma ; \Lambda)$$ is an invertible
   element with respect to the quantum product.
\end{mainthm}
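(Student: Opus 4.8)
The plan is to realize $\Sigma$ as the fiber of a Hamiltonian fibration over $S^{2}=\mathbb{C}P^{1}$ and to identify $[\omega]$ with the associated Seidel element, which is automatically a unit in $QH^{*}(\Sigma;\Lambda)$. First I would use the smallness of the dual directly: since $\dim_{\mathbb{C}} X^{*}\leq N-2$, a generic line $\ell\subset(\mathbb{C}P^{N})^{*}$ is disjoint from $X^{*}$, so the corresponding pencil of hyperplane sections of $X$ consists only of \emph{smooth} hyperplane sections, and, choosing $\ell$ generic, its base locus $B$ (a generic codimension-two linear section of $X$) is smooth by Bertini. Blowing up $B$ produces a smooth projective variety $\widetilde{X}=\mathrm{Bl}_{B}X$ together with a proper holomorphic submersion $\pi\colon\widetilde{X}\to\mathbb{C}P^{1}$ whose fibers are smooth hyperplane sections of $X$; restricting a K\"ahler form of $\widetilde{X}$ to the fibers makes $\pi$ a Hamiltonian fibration over $S^{2}$ with fiber symplectically deformation equivalent to $\Sigma$. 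Since quantum cohomology over $\Lambda=\mathbb{Z}[q,q^{-1}]$ is a symplectic deformation invariant, the Seidel representation applied to $\pi$ then produces an invertible element $\mathcal{S}=\mathcal{S}(\pi)\in QH^{*}(\Sigma;\Lambda)^{\times}$, and it remains to show $\mathcal{S}=[\omega]$.

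To compute $\mathcal{S}$ I would normalize it at the section class $\sigma_{0}\in H_{2}(\widetilde{X})$ carried by the \emph{exceptional sections}. The exceptional divisor satisfies $E\cong B\times\mathbb{C}P^{1}$; each slice $\{b\}\times\mathbb{C}P^{1}$ ($b\in B$) is a section of $\pi$ in a class $\sigma_{0}$ with $\langle c_{1}(\widetilde{X}),\sigma_{0}\rangle=1$, and since $\sigma_{0}$ is not a sum of two nonzero effective curve classes these slices are the only holomorphic stable maps in class $\sigma_{0}$. A short computation of the normal bundle, $N_{\{b\}\times\mathbb{C}P^{1}/\widetilde{X}}\cong\mathcal{O}^{\oplus(\dim_{\mathbb{C}}X-2)}\oplus\mathcal{O}(-1)$, shows that the moduli space of $\sigma_{0}$-sections is regular and equals $B$, of the expected dimension. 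Consequently the $\sigma_{0}$-term of $\mathcal{S}$ is the Poincar\'e dual of the divisor $B\subset\Sigma$ inside a fiber, that is, the hyperplane class $[\omega]$; hence $\mathcal{S}=[\omega]+(\text{contributions of the other section classes})$.

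Every effective section class other than $\sigma_{0}$ has the form $\sigma_{0}+jf$ with $j\geq 1$, where $f$ is the class of a line in a fiber and $\langle c_{1}(\widetilde{X}),f\rangle=c_{\Sigma}$ is the Fano index of $\Sigma$; the contribution of $\sigma_{0}+jf$ to $\mathcal{S}$ lies in $H^{2-2jc_{\Sigma}}(\Sigma)$. This is where the hypotheses are used a second time: by Ein's results, $b_{2}(X)=1$ forces $\Sigma$ to be Fano with $c_{\Sigma}=\tfrac{1}{2}\bigl(\dim_{\mathbb{C}}X+\tdef(X)\bigr)$, which is $\geq 2$ since $\tdef(X)\geq 1$ and $\dim_{\mathbb{C}}X\geq 2$. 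Therefore $2-2jc_{\Sigma}\leq -2<0$ for all $j\geq 1$, the relevant cohomology groups vanish, and $\mathcal{S}=[\omega]$. As $\mathcal{S}$ is invertible, so is $[\omega]\in QH^{2}(\Sigma;\Lambda)$. (One also checks that no $\sigma_{0}+jf$ with $j<0$ is represented by a stable map, because every effective curve class in $\widetilde{X}$ is a nonnegative combination of the exceptional line and a fiber line, on both of which $c_{1}(\widetilde{X})$ is positive.)

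The main obstacle is the second step, the identification of the $\sigma_{0}$-contribution with the hyperplane class: this requires checking that the exceptional-section moduli space is unobstructed and, more technically, fixing the precise grading and normalization conventions for the Seidel element so that the degree count of the third step is valid. It is also here that one must verify that positivity of the defect is genuinely indispensable, both to make the pencil nonsingular and to force $c_{\Sigma}\geq 2$.
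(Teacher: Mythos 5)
Your proposal is correct and follows essentially the same route as the paper: a pencil $\ell$ disjoint from $X^{*}$ yields the Hamiltonian fibration $\pi_{\ell}\colon\widetilde{X}\to\ell$, the only section class surviving the degree and positivity constraints (via Ein's computation $2C_{\Sigma}=n+k\geq 4$) is the exceptional-fiber class, and its contribution is $PD([B_{\ell}])=[\omega_{_{\Sigma}}]$, so the Seidel element equals $[\omega]$ and is invertible. The differences are only in bookkeeping: the paper excludes your classes $\sigma_{0}+jf$ with $j\geq 1$ by the same vertical Chern number count, excludes $j<0$ by Gromov compactness to the integrable structure plus ampleness of $\Sigma$ rather than your effective-cone assertion, and pins down the exceptional-section contribution via the integrable regularity criterion together with a compactness-and-cobordism argument near $\widetilde{J_0}$ --- precisely the points you flagged as needing care.
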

We will actually prove a slightly stronger result
in~\S\ref{s:small-dual-fibr} (see Theorem~\ref{t:om-inv-gnrl} and the
discussion after it). In Theorem~\ref{t:gnrl-S-pi-ell}
in~\S\ref{s:non-monotone} we will establish a much more general,
though less precise, version of this theorem.

A classical result of Lanteri and Struppa~\cite{La-St:top} (see
also~\cite{An-Fr:Lefschetz-2}) on the topology of projective manifolds
with positive defect states that if $X \subset \mathbb{C}P^N$ is a
projective manifold with $dim_{\mathbb{C}} X = n$ and $\tdef(X)=k>0 $
then:
$$b_j(X)=b_{j+2}(X) \; \; \forall \, n-(k-1) \leq j \leq n+k-1.$$
(In~\S\ref{S:subcri-def} we will reprove this fact using Morse
theory). As we will see in Corollary~\ref{mc:periodicity} below,
Theorem~\ref{mt:om-inv} implies stronger topological restrictions in
the case $b_2(X)=1$.

As mentioned above, under the assumption $b_2(X)=1$ the manifold
$\Sigma$ is Fano. The quantum cohomology $QH^{\ast}(\Sigma ; \Lambda)
= (H^{\bullet}(\Sigma) \otimes \Lambda)^*$ admits a grading induced
from both factors $H^{\bullet}(\Sigma)$ and $\Lambda$. Here we grade
$\Lambda$ by taking $deg(q) = 2C_{\Sigma} $, where
$$C_{\Sigma} = \min \left \{ c_1^{\Sigma}(A) >0 
   \mid A \in \textnormal{image\,} (\pi_2(\Sigma) \rightarrow
   H_2(\Sigma ; \mathbb{Z})) \right \} \in \mathbb{N}$$ is the minimal
Chern number of $\Sigma$. Here we have denoted by $c_1^{\Sigma} \in
H^2(\Sigma;\mathbb{Z})$ the first Chern class of the tangent bundle
$T\Sigma$ of $\Sigma$.  Theorem~\ref{mt:om-inv} implies that the map
$$\ast [\omega] : QH^{\ast} (\Sigma ; \Lambda) \longrightarrow
QH^{\ast+2}(\Sigma ; \Lambda), \quad a \longmapsto a*[\omega],$$ is an
isomorphism. In our case, a computation of Ein~\cite{Ein:dual1} gives:
$$2C_X = n+k+2, \quad 2C_{\Sigma}=n+k.$$ (It is well known, by a
result of Landman, that $n$ and $k$ must have the same parity.
See~\S\ref{S:PDS}.)  Define now the cohomology of $X$ graded
cyclically as follows:
\begin{equation} \label{eq:chmlg-cyc-grd} \widetilde{H}^i(X) =
   \bigoplus_{l \in \mathbb{Z}} H^{i+2C_X l}(X), \quad
   \widetilde{b}_i(X) = \textnormal{rank}\, \widetilde{H}^i(X).
\end{equation}
Define $\widetilde{H}^i(\Sigma)$ and $\widetilde{b}_i(\Sigma)$ in a
similar way (note that in the definition of $\widetilde{H}^i(\Sigma)$
one has to replace also $C_X$ by $C_{\Sigma}$).
Theorem~\ref{mt:om-inv} together with a simple application of the
Lefschetz hyperplane section theorem give the following result:

\addtocounter{mainthm}{1}
\begin{maincor} \label{mc:periodicity} Let $X \subset \mathbb{C}P^N$
   be an algebraic manifold with small dual and $b_2(X)=1$. Then
   $\widetilde{b}_j(X) = \widetilde{b}_{j+2}(X), \;\; \forall \, \, j
   \in \mathbb{Z}.$ Moreover, if $\Sigma \subset X$ is a smooth
   hyperplane section then similarly to $X$ we have
   $\widetilde{b}_j(\Sigma) = \widetilde{b}_{j+2}(\Sigma), \;\;
   \forall \, \, j \in \mathbb{Z}.$
\end{maincor}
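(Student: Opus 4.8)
The plan is to deduce Corollary~\ref{mc:periodicity} from Theorem~\ref{mt:om-inv} in two steps. First, the periodicity for $\Sigma$: by Theorem~\ref{mt:om-inv} the class $[\omega]$ is invertible in $QH^{\ast}(\Sigma;\Lambda)$, so quantum multiplication $\ast[\omega]\colon QH^{\ast}(\Sigma;\Lambda)\to QH^{\ast+2}(\Sigma;\Lambda)$ is an isomorphism of graded abelian groups raising degree by $2$. To read off Betti numbers one unwinds the grading: since $\deg q=2C_{\Sigma}$, the degree-$j$ summand is $QH^{j}(\Sigma;\Lambda)=(H^{\bullet}(\Sigma)\otimes\Lambda)^{j}=\bigoplus_{l\in\mathbb{Z}}H^{\,j-2C_{\Sigma}l}(\Sigma)\cong\widetilde{H}^{j}(\Sigma)$, a free abelian group of rank $\widetilde{b}_{j}(\Sigma)$. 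Comparing ranks on the two sides of the isomorphism $\ast[\omega]$ in each degree gives $\widetilde{b}_{j}(\Sigma)=\widetilde{b}_{j+2}(\Sigma)$ for all $j$, which is the ``moreover'' part of the corollary.

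The second step relates the cohomology of $X$ to that of $\Sigma$. Put $n=\dim_{\mathbb{C}}X$, so $\dim_{\mathbb{C}}\Sigma=n-1$, and recall from Ein's computation that $2C_{X}=2C_{\Sigma}+2=n+k+2$. The Lefschetz hyperplane theorem gives $b_{m}(X)=b_{m}(\Sigma)$ for $m\le n-2$; combining this with Poincar\'e duality on $X$ and on $\Sigma$ gives dually $b_{m}(X)=b_{m-2}(\Sigma)$ for $m\ge n+1$. The remaining degrees are handled by two classical inputs: hard Lefschetz on $X$ gives $b_{n-1}(X)=b_{n+1}(X)$, whence $b_{n-1}(X)=b_{n-1}(\Sigma)$; and the Lanteri--Struppa identity recalled above, applicable at $j=n$ since the defect is $\ge1$, gives $b_{n}(X)=b_{n+2}(X)$, whence $b_{n}(X)=b_{n}(\Sigma)$. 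In sum: $b_{m}(X)=b_{m}(\Sigma)$ for $m\le n$ and $b_{m}(X)=b_{m-2}(\Sigma)$ for $m\ge n+1$.

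It remains to feed these relations into the cyclic sums. Each residue class modulo $2C_{X}=n+k+2$ meets $\{0,\dots,2n\}$ in exactly one degree, except for the $n-k-1$ classes represented by $j\in\{0,\dots,n-k-2\}$ which meet it in two (namely $j$ and $j+2C_{X}$); the analogue holds for $\Sigma$ with $2C_{X}$ and $2n$ replaced by $2C_{\Sigma}$ and $2n-2$, with the same number of two-element classes. Because $2C_{X}=2C_{\Sigma}+2$, the degrees $m\ge n+1$ of $X$ contribute the values $b_{m-2}(\Sigma)$, i.e.\ exactly the degrees $\ge n-1$ of $\Sigma$ shifted up by $2$, so the two cyclic filtrations line up after this shift. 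Distinguishing cases according to which buckets $j$ and $j+2$ fall into, and substituting the relations above together with Poincar\'e duality, one finds that every required equality $\widetilde{b}_{j}(X)=\widetilde{b}_{j+2}(X)$ reduces either to an instance of $\widetilde{b}_{i}(\Sigma)=\widetilde{b}_{i+2}(\Sigma)$ from the first step, or --- for the residues of $j=n-1$ and $j=n$, and the ``wrap-around'' residues $j\in\{n+k,n+k+1\}$ (where one also uses $b_{1}(X)=0$, valid as $X$ is Fano) --- to hard Lefschetz and the Lanteri--Struppa identity. This is the genuine labor of the proof: matching up the two cyclic filtrations whose periods differ by exactly $2$ and correctly handling the ``middle'' and ``wrap-around'' residues, where the plain Lefschetz comparison is insufficient. (Throughout, $\dim_{\mathbb{C}}X\ge 2$ is used so that $\Sigma$ is a connected manifold of positive dimension.)
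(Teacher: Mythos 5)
Your overall strategy coincides with the paper's (\S\ref{s:prf-periodicity-gnrl}): read off $\widetilde{b}_j(\Sigma)=\widetilde{b}_{j+2}(\Sigma)$ from the invertibility of $[\omega]$, then transfer the periodicity to $X$ by comparing Betti numbers of $X$ and $\Sigma$ degree by degree. The $\Sigma$ half is correct. The transfer step, however, has a genuine gap at the middle degree of $\Sigma$. You assert that the ordinary Lefschetz hyperplane theorem plus Poincar\'e duality yields $b_m(X)=b_{m-2}(\Sigma)$ for all $m\ge n+1$; but $2n-m$ lies in the range where weak Lefschetz is an isomorphism ($\le n-2$) only when $m\ge n+2$, so the case $m=n+1$ --- equivalently $b_{n-1}(X)=b_{n-1}(\Sigma)$ --- is not covered. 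Your treatment of $m=n-1$ (``hard Lefschetz gives $b_{n-1}(X)=b_{n+1}(X)$, whence $b_{n-1}(X)=b_{n-1}(\Sigma)$'') then invokes exactly that missing case, so it is circular. And the equality $b_{n-1}(X)=b_{n-1}(\Sigma)$ is not a formal consequence of weak Lefschetz, hard Lefschetz and duality: $n-1$ is the middle degree of $\Sigma$, the classical theorems give only $b_{n-1}(X)\le b_{n-1}(\Sigma)$, and the discrepancy is the rank of the group of vanishing cycles, which is typically positive when $\tdef=0$. For $k=\tdef(X)\ge 2$ you can patch this by applying the Lanteri--Struppa identity at $j=n+1$ (which then lies in the range $n-k+1\le j\le n+k-1$) together with the already-proved $\Sigma$-periodicity; but for $k=1$ the identity covers only $j=n$ and the gap is real. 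Concretely, with $n=5$, $k=1$, the data $b_\bullet(X)=(1,0,1,0,1,0,1,0,1,0,1)$ and $b_4(\Sigma)=2$ satisfy every input you list (weak and hard Lefschetz, Poincar\'e duality, $b_5(X)=b_7(X)$, and $\Sigma$-periodicity), yet $\widetilde{b}_2(X)=2\ne 1=\widetilde{b}_4(X)$.

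What is missing is the strengthened Lefschetz theorem for manifolds with positive defect: $i^*:H^j(X)\to H^j(\Sigma)$ is an isomorphism for all $j<n+k-1$ (Corollary~\ref{c:Lef-small-dual}), which the paper derives from the subcriticality of $X\setminus\Sigma$ (Theorem~\ref{mt:subcrit}); equivalently, one needs the Andreotti--Frankel/Lanteri--Struppa statement that the vanishing cycles of $\Sigma$ vanish when $\tdef>0$. This is a substantive extra input, not bookkeeping. Once it (and the refined hard Lefschetz it implies, Corollary~\ref{c:top-period}) is in hand, your matching of the two cyclic filtrations of periods $n+k+2$ and $n+k$ goes through and agrees with the paper's argument.
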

A similar result (for subcritical manifolds) has been previously
obtained by He~\cite{He:subcrit} using methods of contact homology.

If $dim_{\mathbb{C}}(X) = n$ and $\tdef(X) = k$,
Theorem~\ref{mc:periodicity} implies the following relations among the
Betti numbers of $X$:
\begin{align*}
   & b_j(X) + b_{j+n+k+2}(X) = b_{j+2}(X) + b_{j+n+k+4}(X), \;\;
   \forall \, \, 0 \leq j \leq n+k-1,  \\
   & b_{n+k}(X) = b_{n+k+2}(X) + 1, \quad b_{n+k+1}(X) =
   b_{n+k+3}(X),
\end{align*}
and the following ones for those of $\Sigma$:
\begin{align*}
   & b_j(\Sigma) + b_{j+n+k}(\Sigma) = b_{j+2}(\Sigma) +
   b_{j+n+k+2}(\Sigma), \;\;
   \forall \, \, 0 \leq j \leq n+k-3,  \\
   & b_{n+k-2}(\Sigma) = b_{n+k}(\Sigma) + 1, \quad b_{n+k-1}(\Sigma) =
   b_{n+k+1}(\Sigma).
\end{align*}
We will prove a slightly stronger result in~\S\ref{s:small-dual-fibr},
see Corollary~\ref{c:periodicity-gnrl}.




\begin{exnonum}
   Consider the complex Grassmannian $X = Gr(5,2) \subset
   \mathbb{C}P^{9}$ of $2$-dimensional subspaces in $\mathbb{C}^5$
   embedded in projective space by the Pl\"{u}cker embedding.  It is
   known that $\tdef(X)=2$, see~\cite{Mumford:footnotes,
     Gr-Ha:alg-geom-loc-diff, Te:dual}. We have $\dim_{\mathbb{C}}(X)
   = 6$ and $2C_X = 10$. The table of Betti numbers of $X$ is given as
   follows:
   \begin{center}
      \begin{tabular}
         {|c|c|c|c|c|c|c|c|c|c|c|c|c|c|} 
         \hline q & 0 & 1 & 2 & 3  & 4 & 5 & 6 & 7 & 8 & 9 & 10 & 11 & 12 
         \\ \hline 
         $b_q(X)$ & 1 & 0 & 1 & 0 & 2 & 0 & 2 & 0 & 2 & 0 & 1 & 0 & 1 
         \\ \hline 
      \end{tabular}
   \end{center}
\end{exnonum}

Further implications of Theorem~\ref{mt:om-inv} are obtained by
studying the algebraic properties of the inverse $[\omega]^{-1}$.
First note that due to degree reasons the inverse element should be of
the form $$ [\omega]^{-1} = \alpha_{n+k-2} \otimes q^{-1} \in
QH^{-2}(\Sigma ; \Lambda)$$ where $ \alpha_{n+k-2} \in
H^{n+k-2}(\Sigma)$ is a nontrivial element. Moreover, this element
needs to satisfy the following conditions:
$$[\omega] \cup \alpha_{n+k-2} =0, \quad ([\omega] \ast
\alpha_{n+k-2})_1= 1,$$ where $([\omega ] \ast \alpha_{n+k-2})_1 \in
H^0(\Sigma)$ is determined by the condition that
$$\langle ([\omega] \ast \alpha_{n+k-2})_1 , - \rangle = 
GW_1^{\Sigma}(PD[\omega], PD(\alpha_{n+k-2}), -).$$ Here $PD$ stands
for Poincar\'{e} duality, and for $a \in QH^l(\Sigma;\Lambda)$ and $i
\in \mathbb{Z}$ we denote by $(a)_i \in H^{l-2i C_{\Sigma}}(\Sigma)$
the coefficient of $q^i$ in $a$.  The notation $GW^{\Sigma}_j(A,B,C)$
stands for the Gromov-Witten invariant counting the number of rational
curves $u:\mathbb{C}P^1 \to \Sigma$ passing through three cycles
representing the homology classes $A, B, C$ with
$c_1(u_*[\mathbb{C}P^1]) = j C_{\Sigma}$.

So in our case, the fact that $([\omega]*\alpha_{n+k-2})_1 \neq 0$
implies that $\Sigma$ is uniruled. The uniruldness of $\Sigma$ (as
well as that of $X$) was previously known and the variety of rational
curves on it was studied by Ein in~\cite{Ein:dual1}. Finally, note
that the uniruldness of $X$ follows also from the results of
He~\cite{He:subcrit} in combination with Theorem~\ref{mt:subcrit}
above.

The method of proof of Theorem~\ref{mt:om-inv} is an application of
the theory of Hamiltonian fibrations and, in particular, their Seidel
elements, see~\cite{Se:pi1}. In~\cite{Se:pi1} Seidel constructed a
representation of $\pi_1(Ham(\Sigma, \omega))$ on $QH(\Sigma ;
\Lambda)$ given by a group homomorphism $$S : \pi_1(Ham(\Sigma,
\omega)) \longrightarrow QH(\Sigma ; \Lambda)^{\times},$$ where
$QH(\Sigma ; \Lambda)^{\times}$ is the group of invertible elements of
the quantum cohomology algebra.

Theorem~\ref{mt:om-inv} follows from:
\begin{mainthm} \label{mt:ham-loop} Let $X \subset \mathbb{C}P^N$ be
   an algebraic manifold with small dual and $b_2(X)=1$. Let $\Sigma
   \subset X$ be a smooth hyperplane section of $X$ and denote by
   $\omega$ the symplectic structure induced on $\Sigma$ from
   ${\mathbb{C}}P^N$. There exists a {\em nontrivial} element $1 \neq
   \lambda \in \pi_1(Ham(\Sigma , \omega))$ whose Seidel element is
   given by
   $$S(\lambda) = [\omega] \in QH^2(\Sigma ; \Lambda).$$
\end{mainthm}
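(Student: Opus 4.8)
The plan is to produce the Hamiltonian loop $\lambda$ geometrically from the pencil of hyperplane sections of $X$, and then to compute its Seidel element by identifying it with the natural Hamiltonian fibration over $S^2$ obtained by blowing up $X$ along the base locus $\Sigma \cap \Sigma'$ of a generic pencil. Concretely, I would start from a generic pencil $\{H_t\}_{t \in \mathbb{C}P^1}$ of hyperplanes and let $\widetilde{X} = \mathrm{Bl}_{\Sigma \cap \Sigma'}(X)$ be the blow-up along the (smooth, codimension-two) base locus; the rational map $X \dashrightarrow \mathbb{C}P^1$ resolves to a genuine holomorphic fibration $p : \widetilde{X} \to \mathbb{C}P^1 = S^2$ whose smooth fibre is $\Sigma$. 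The smallness of the dual variety $X^\ast$ is exactly what guarantees that a generic such pencil has \emph{no singular fibres at all}: since $\mathrm{codim}_{\mathbb{C}} X^\ast \ge 2$, a generic line in $(\mathbb{C}P^N)^\ast$ misses $X^\ast$, so every hyperplane in the pencil is transverse to $X$ and $p$ is a smooth (indeed holomorphic) fibre bundle with fibre $\Sigma$. This is the crucial point where the hypothesis enters, and I expect verifying the resulting fibration is Hamiltonian (the fibrewise symplectic forms glue to a closed form, using that $[\omega]$ extends to $\widetilde{X}$ as a Kähler class) to be routine.

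Next I would extract the loop $\lambda \in \pi_1(\mathrm{Ham}(\Sigma,\omega))$ as the clutching/monodromy data of $p$. Because $p$ is a Hamiltonian fibration over $S^2$, its isomorphism class is classified by an element of $\pi_1(\mathrm{Ham}(\Sigma,\omega))$; call it $\lambda$. By Seidel's construction \cite{Se:pi1}, $S(\lambda)$ is computed by counting holomorphic sections of $p$ (with respect to a fibred almost complex structure), weighted by $q$ to the power recording the vertical Chern number. In our situation these sections are extremely concrete: a section of $p : \widetilde{X} \to \mathbb{C}P^1$ corresponds to a line in $\mathbb{C}P^N$ lying on $X$ and meeting the base locus — i.e. after the blow-up, to the exceptional divisor and its geometry over $\Sigma \cap \Sigma'$. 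I would organise the count so that the lowest-order term of $S(\lambda)$ is $[\omega]$ itself: the constant (degree-zero in $q$) part comes from the intersection-theoretic contribution of the zero-section-like sections, which pairs with a cycle $c$ exactly as $\langle [\omega], c\rangle$ because the base locus $\Sigma \cap \Sigma'$ is Poincaré dual to $[\omega]$ in $\Sigma$.

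To pin down $S(\lambda) = [\omega]$ \emph{on the nose} (and not merely up to higher-order quantum corrections), I would argue by degree: using Ein's computation $2C_\Sigma = n+k$ recorded in the excerpt, the Seidel element $S(\lambda)$ lies in $QH^2(\Sigma;\Lambda)$ and its $q^i$-component sits in $H^{2 - i(n+k)}(\Sigma)$; since $\Sigma$ has real dimension $2(n-1)$ and $k \ge 1$ forces $n+k > 2$ in the relevant range, the only components that can be nonzero are the $q^0$-component in $H^2(\Sigma)$ and possibly one negative power of $q$. A direct positivity/dimension count of the holomorphic sections — together with the fact, provable as above from the base-locus geometry, that the $q^0$-part equals $[\omega]$ — rules out the other potential term, giving $S(\lambda) = [\omega]$ exactly. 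Finally, $\lambda$ is nontrivial because its Seidel element $[\omega]$ is not the unit $1 \in QH^0(\Sigma;\Lambda)$ (they live in different degrees, and $[\omega] \ne 0$ since $\Sigma$ is symplectic), and $S$ is a homomorphism to $QH(\Sigma;\Lambda)^\times$ sending $1$ to $1$.

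\medskip

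The step I expect to be the genuine obstacle is the \emph{exact} evaluation of the Seidel element: showing that no quantum corrections survive. Constructing $\lambda$ from the pencil and checking it is Hamiltonian is essentially formal given smallness of the dual; the delicate part is controlling the moduli space of holomorphic sections of $p$ well enough — via its interpretation in terms of lines on $X$ through the base locus, regularity of the fibred complex structure, and the degree bookkeeping above — to conclude the answer is precisely $[\omega]$ with coefficient $1$ in the top relevant term and nothing else.
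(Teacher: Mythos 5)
Your geometric setup coincides with the paper's (\S\ref{s:small-dual-fibr}): a pencil $\ell$ missing $X^*$, the blow-up $\widetilde{X}$ of $X$ along the base locus, the resulting Hamiltonian fibration over $S^2$, its clutching class $\lambda$, and the identification of the $q^0$-part of $S(\lambda)$ with $PD([B_{\ell}])=[\omega]$. The problem is that the heart of the theorem --- the exact evaluation $S(\lambda)=[\omega]$ --- is precisely the step you defer, and the degree argument you offer does not close it. With $\deg q=n+k$ the grading indeed leaves open an extra term $\alpha\otimes q^{-1}$ with $\alpha\in H^{n+k+2}(\Sigma)$ whenever $k\le n-4$, and nothing in the abstract theory forbids section classes with $2-2n\le 2c_1^v\le 0$ from contributing. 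Killing these requires input beyond degrees, and this is where the hypothesis $b_2(X)=1$ (unused anywhere in your outline) must enter: the paper writes every section class as $\widetilde{A}=j(A)+dF$ with $h(A)=[\Sigma]\cdot A=1-d$, uses Gromov compactness against the integrable structure $\widetilde{J_0}$ to show that a contributing class has $A$ in the effective cone of rational curves (Lemma~\ref{l:curves-1}), so $1-d\ge 0$ by ampleness of $\Sigma$, and then combines the monotonicity identity $c_1^X(A)=\tfrac{n+k+2}{2}\,h(A)$ (a consequence of $b_2(X)=1$ and Ein's lines) with the blow-up formula $c_1^v(\widetilde{A})=c_1^X(A)+d-2$ to get $c_1^v(\widetilde{A})=\tfrac{n+k}{2}(1-d)-1>0$ for $d<1$, contradicting the constraint $c_1^v\le 0$ (Lemma~\ref{l:d<1}). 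Without this argument the possible correction term survives, and it is not a phantom: in the non-monotone example of \S\ref{sb:non-monotone-example} such extra terms genuinely appear.

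The second missing piece is the rigorous computation of the $q^0$-term itself. The contributing sections in class $F$ are not ``lines in $\mathbb{C}P^N$ lying on $X$'': they are the constant sections $\widetilde{u}_b(z)=(b,z)$, $b\in B_{\ell}$, whose projection to $X$ is a single point of the base locus (Lemma~\ref{l:sect-class-F}); honest lines on $X$ correspond to classes $j(A)+dF$ with $d\le 0$, i.e.\ exactly the classes being excluded. To conclude $\mathcal{S}(F)=PD([B_{\ell}])=[\omega]$ one still has to handle transversality: compactness of the moduli space of sections in class $F$ for tamed almost complex structures near $\widetilde{J_0}$ (using that $\widetilde{\Omega}(F)=1$ is the minimal positive area, so no bubbling), regularity of the integrable $\widetilde{J_0}$ via the splitting $\widetilde{u}_b^*T\widetilde{X}\cong\mathcal{O}_{\ell}(2)\oplus\mathcal{O}_{\ell}^{\oplus(n-2)}\oplus\mathcal{O}_{\ell}(-1)$ and the criterion for integrable structures, and a compact cobordism of evaluation cycles to a nearby regular $\widetilde{J}$. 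These points, together with the exclusion argument above, constitute essentially all of \S\ref{s:proof-om-inv}; as written, your proposal records the correct strategy but leaves the proof's actual content unestablished. (Your nontriviality argument for $\lambda$ is fine: $[\omega]$ has a nonzero component in $H^2(\Sigma)\otimes q^0$, so it cannot equal $q^r$ for any $r$, and the normalization ambiguity of $S$ is harmless.)
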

See Theorems~\ref{t:om-inv-gnrl} and~\ref{t:gnrl-S-pi-ell} for more
general statements.

Before we turn to examples, let us mention that by results
of~\cite{Be-Fa-So:discr}, based on Mori theory, the classification of
manifolds with small dual is reduced to the case $b_2(X)=1$. Here is a
list of examples of manifolds with small dual and $b_2(X)=1$
(see~\cite{Te:dual} for more details):
\begin{exsnonum}
   \begin{enumerate}
     \item $X= \mathbb{C}P^n \subset \mathbb{C}P^{n+1}$ has $
      \tdef(X)=n$. \label{ex-i:cpn}
     \item $X=Gr(2l+1,2)$ embedded via the Pl\"{u}cker embedding has
      $\tdef(X)=2$. (See~\cite{Mumford:footnotes,
        Gr-Ha:alg-geom-loc-diff, Te:dual}.)
     \item $X=\mathbb{S}_{5} \subset \mathbb{C}P^{15}$ the
      $10$--dimensional spinor variety has $\tdef(X)=4$.
      (See~\cite{Laz-VdVen:topics-geom-proj, Te:dual}).
      \label{ex-i:spinor}
     \item In any of the examples (1)--(3) one can take iterated
      hyperplane sections and still get manifolds with
      $\textnormal{def} > 0$ and $b_2=1$, provided that the number of
      iterations does not exceed the defect$-1$. (See~\S\ref{S:PDS}.)
      \label{ex-i:hyp-sect}
   \end{enumerate}
\end{exsnonum}

The manifolds in~(\ref{ex-i:cpn})--(\ref{ex-i:spinor}) together with
the corresponding hyperplane sections~(\ref{ex-i:hyp-sect}) are the
only known examples of projective manifolds with small dual and
$b_2(X)=1$, see~\cite{Belt-Som:adjunction, Sn:nef-val}.  On the basis
of these examples, it is conjectured in~\cite{Belt-Som:adjunction}
that all non-linear algebraic manifolds with $b_2(X)=1$ have
$\tdef(X)\leq 4$.

\subsubsection*{Organization of the paper}
The rest of the paper is organized as follows: In~\S\ref{S:PDS} we
recall basic facts on projective manifolds with small dual.
In~\S\ref{S:ham-fib} we review relevant results from the theory of
Hamiltonian fibrations and the Seidel representation.
In~\S\ref{s:small-dual-fibr} we explain the relation between manifolds
with small dual and Hamiltonian fibrations. In~\S\ref{s:proof-om-inv}
we prove Theorems~\ref{mt:om-inv} and~\ref{mt:ham-loop}.
In~\S\ref{S:subcri-def} we discuss the relation between manifolds with
small dual and subcritical Stein manifolds and derive some topological
consequences from that. Corollary~\ref{mc:periodicity} is proved
in~\S\ref{s:prf-periodicity-gnrl}. In~\S\ref{s:further} we present
more applications of our methods to questions on the symplectic
topology and algebraic geometry of manifolds with small dual.  We also
outline an alternative proof of Corollary~\ref{mc:periodicity} based
on Lagrangian Floer theory. In~\S\ref{s:non-monotone} we explain how
to generalize Theorem~\ref{mt:om-inv} to the case $b_2(X)>1$ (or more
generally to non-monotone manifolds). In the same section we also work
out explicitly such an example. Finally, in~\S\ref{S:discussion} we
discuss some open questions and further possible directions of study.

\subsubsection*{Acknowledgments} We would like to thank the referee
for pointing out to us the reference to the (second) paper of
Andreotti-Frankel~\cite{An-Fr:Lefschetz-2} and for useful remarks
helping to improve the quality of the exposition.

\section{Basic results on projective manifolds with small dual} \cntrs
\label{S:PDS}

Let $X \subset \mathbb{C}P^N$ be an algebraic manifold of
$dim_{\mathbb{C}} X =n$. Denote by $(\mathbb{C}P^N)^{\ast}$ the dual
projective space parametrizing hyperplanes $H \subset \mathbb{C}P^N$.
To $X$ one associates the dual variety $X^{\ast} \subset
(\mathbb{C}P^N)^{\ast}$, which (in the case $X$ is smooth) is defined
as $$ X^{\ast} = \{ H \mid H \textnormal{ is somewhere tangent to }
X\}.$$ We refer the reader to~\cite{Te:dual} for a detailed account on
the subject of projective duality. In this section we will review
basic properties of projective manifolds with positive defect. Define
the defect of $X$ to be $$ \tdef(X)=\textnormal{codim}_{\mathbb{C}}
X^{\ast}-1.$$ Note that when $X^{\ast}$ is a hypersurface the defect
of $X$ is zero. An important feature of the defect is the following:
if $\tdef(X)=k$ then for a smooth point of the dual variety, $H \in
X_{\textnormal{sm}}^{\ast}$, the singular part $\textnormal{sing}(X
\cap H) $ of $X \cap H$ is a projective space of dimension $k$
linearly embedded in $\mathbb{C}P^N$. Thus, $X$ is covered by
projective spaces of dimension $k$, and in particular there is a
projective line through every point of $X$ (see~\cite{Kl:conormal}).

Next, the defect of $X$ and that of a hyperplane section $\Sigma
\subset X$ of $X$ are related as follows (see~\cite{Ein:dual2}):
\begin{equation} \label{eq:def-Sigma-X}
   \tdef(\Sigma) = \max \left\{ \tdef(X)-1, 0 \right\}.
\end{equation}
A well known (unpublished) result of Landman states that for manifolds
$X$ with small dual we have the following congruence
$\dim_{\mathbb{C}}(X) \equiv \tdef(X) \pmod{2}$ (see~\cite{Ein:dual1,
  Te:dual} for a proof of this).

Later, Ein proved in~\cite{Ein:dual1} the following.  {\sl Let $X
  \subset \mathbb{C}P^N$ be an algebraic manifold with
  $\dim_{\mathbb{C}}(X)=n$ and $\tdef(X)=k>0$. Denote by $c_1^X$ the
  first Chern class of $X$. Then through every point in $X$ there
  exists a projective line $S$ with
  $$c_1^X(S) = \frac{n+k}{2}+1.$$}

Of special importance is the case $b_2(X)=1$, which was extensively
studied by Ein in~\cite{Ein:dual1}. In this case we have:
\begin{equation}
   \label{eq:chern}
   c_1^X = \left(\frac{n+k}{2} +1 \right) \cdot h,
\end{equation}
where $h \in H^2(X) \cong \mathbb{Z}$ is the positive generator, which
is also the class of the restriction (to $X$) of the K\"{a}hler form
of ${\mathbb{C}}P^N$. In particular, in this case both $X$ and
$\Sigma$ are Fano manifolds.

\section{Hamiltonian fibrations} \cntrs
\label{S:ham-fib}
In what follows we will use the theory of symplectic and Hamiltonian
fibrations and their invariants. We refer the reader
to~\cite{GLS:symp-fibr, McD-Sa:Intro, McD-Sa:Jhol-2} for the
foundations.

Let $\pi: \widetilde{X} \to B$ be a smooth locally trivial fibration
with fiber $\Sigma$ and base $B$ which are both closed manifolds. We
will assume in addition that $B$ is a {\em simply connected} manifold.
Further, let $\widetilde{\Omega}$ be a closed $2$-form on
$\widetilde{X}$ such that the restriction $\Omega_b =
\widetilde{\Omega}|_{\Sigma_b}$ to each fiber $\Sigma_b =
\pi^{-1}(b)$, $b \in B$, is a symplectic form. Fix $b_0 \in B$, and
let $\omega_{_{\Sigma}}$ be a symplectic form on $\Sigma$ such that
$(\Sigma, \omega_{_{\Sigma}})$ is symplectomorphic to $(\Sigma_{b_0},
\Omega_{b_0})$. This structure is a special case of a so called
Hamiltonian fibration. It is well known that under these assumptions
all fibers $(\Sigma_b, \Omega_b)$ are symplectomorphic and in fact the
structure group of $\pi$ can be reduced to $\textnormal{Ham}(\Sigma,
\omega_{_{\Sigma}})$.

We will assume from now on that $B = S^2$. We identify $S^2 \cong
\mathbb{C}P^1$ in a standard way and view $S^2$ as a Riemann surface
whose complex structure is denoted by $j$.

\subsection{Holomorphic curves in Hamiltonian fibrations} \cntrsb
\label{sb:hol-curves-ham-fibr}
Let $\pi: (\widetilde{X}, \widetilde{\Omega}) \to S^2$ be a
Hamiltonian fibration as above. Denote by $T^{v}\widetilde{X} = \ker(D
\pi)$ the vertical part of the tangent bundle of $\widetilde{X}$. We
now introduce almost complex structures compatible with the fibration.
These are by definition almost complex structures $\widetilde{J}$ on
$\widetilde{X}$ with the following properties:
\begin{enumerate}
  \item The projection $\pi$ is $(\widetilde{J}, j)$--holomorphic.
  \item For every $z \in S^2$ the restriction $J_z$ of $\widetilde{J}$
   to $\Sigma_{z}$ is compatible with the symplectic form
   $\Omega_{z}$, i.e. $\Omega_z(J_z \xi, J_z \eta) = \Omega_z(\xi,
   \eta)$ for every $\xi, \eta \in T_z^v \widetilde{X}$, and
   $\Omega_z(\xi, J_z \xi)>0$ for every $0 \neq \xi \in T_z^v
   \widetilde{X}$.
\end{enumerate}
We denote the space of such almost complex structures by
$\widetilde{\mathcal{J}}(\pi, \widetilde{\Omega})$. 

Denote by $H_2^{\pi}\subset H_2(\widetilde{X};\mathbb{Z})$ be the set
of classes $\widetilde{A}$ such that $\pi_*(\widetilde{A}) = [S^2]$.
Given $\widetilde{A}$ and $\widetilde{J} \in
\widetilde{\mathcal{J}}(\pi, \widetilde{\Omega})$ denote by
$\mathcal{M}^{\frak s}(\widetilde{A}, \widetilde{J})$ the space of
$\widetilde{J}$--holomorphic sections in the class $\widetilde{A}$,
i.e. the space of maps $\widetilde{u}:S^2 \longrightarrow
\widetilde{X}$ with the following properties:
\begin{enumerate}
  \item $\widetilde{u}$ is $(j, \widetilde{J})$--holomorphic.
  \item $\widetilde{u}$ is a section, i.e. $\pi \circ \widetilde{u} =
   \textnormal{id}$.
  \item $\widetilde{u}_*[S^2] = \widetilde{A}$.
\end{enumerate}
Fix $z_0 \in S^2$ and fix an identification $(\Sigma,
\omega_{_{\Sigma}}) \approx (\Sigma_{z_0}, \Omega_{z_0})$.  The space
of sections comes with an evaluation map: $$ev_{\widetilde{J},z_0} :
\mathcal{M}^{\frak s}(\widetilde{A}, \widetilde{J}) \longrightarrow
\Sigma, \quad ev_{\widetilde{J},z_0}(\widetilde{u}) =
\widetilde{u}(z_0).$$

\subsubsection{Transversality} \label{sbsb:transversality} In order to
obtain regularity and transversality properties for the moduli spaces
of holomorphic sections and their evaluation maps we will need to work
with so called {\em regular} almost complex structures. Moreover,
since the moduli spaces of holomorphic sections are usually not
compact they do not carry fundamental classes and so the evaluation
maps do not induce in a straightforward way homology classes in their
target ($\Sigma$ in this case). The reason for non-compactness of
these moduli spaces is that a sequence of holomorphic sections might
develop bubbles in one of the fibers (see e.g.~\cite{McD-Sa:Jhol-2}).
The simplest way to overcome this difficulty is to make some
positivity assumptions on the fiber $\Sigma$ (called {\em
  monotonicity}). Under such conditions the moduli spaces of
holomorphic sections admits a nice compactification which makes it
possible to define homology classes induced by the evaluation maps.
Here is the relevant definition.
\begin{dfn} \label{df:monotone} Let $(\Sigma, \omega_{_{\Sigma}})$ be
   a symplectic manifold. Denote by $H_2^S(\Sigma) \subset H_2(\Sigma;
   \mathbb{Z})$ the image of the Hurewicz homomorphism $\pi_2(\Sigma)
   \longrightarrow H_2(\Sigma; \mathbb{Z})$.  Denote by $c_1^{\Sigma}
   \in H^2(\Sigma;\mathbb{Z})$ the first Chern class of the tangent
   bundle $(T\Sigma, J_{\Sigma})$, where $J_{\Sigma}$ is any almost
   complex structure compatible with $\omega_{_{\Sigma}}$. The
   symplectic manifold $(\Sigma, \omega_{_{\Sigma}})$ is called
   spherically monotone if there exists a constant $\lambda > 0$ such
   that for every $A \in H_2^S(\Sigma)$ we have $\omega_{_{\Sigma}}(A)
   = \lambda c_1^{\Sigma}(A)$. For example, if $\Sigma$ is a Fano
   manifold and $\omega_{_{\Sigma}}$ is a symplectic form with
   $[\omega_{_{\Sigma}}] = c_1^{\Sigma}$ then obviously $(\Sigma,
   \omega_{_{\Sigma}})$ is spherically monotone.
\end{dfn}

From now on we assume that the fiber $(\Sigma, \omega_{_{\Sigma}})$ of
$\pi:(\widetilde{X}, \widetilde{\Omega}) \longrightarrow S^2$ is
spherically monotone. Denote by $c_1^v = c_1(T^v\widetilde{X}) \in
H^2(\widetilde{X})$ the vertical Chern class, i.e.  the first Chern
class of the vertical tangent bundle of $\widetilde{X}$. The following
is proved in~\cite{Se:pi1, McD-Sa:Jhol-2}. There exists a {\em dense
  subset} $\widetilde{\mathcal{J}}_{\textnormal{reg}}(\pi,
\widetilde{\Omega}) \subset \widetilde{\mathcal{J}}(\pi,
\widetilde{\Omega})$ such that for every $\widetilde{J} \in
\widetilde{\mathcal{J}}_{\textnormal{reg}}(\pi, \widetilde{\Omega})$
and every $\widetilde{A} \in H_2^{\pi}$ the following holds:
\begin{enumerate}
  \item For every $\widetilde{A} \in H_2^{\pi}$, the moduli space
   $\mathcal{M}^{\frak{s}}(\widetilde{A}, \widetilde{J})$ of
   $\widetilde{J}$-holomorphic sections in the class $\widetilde{A}$
   is either empty or a smooth manifold of dimension
   \begin{equation*} dim_{\mathbb{R}} \mathcal{M}^{\frak
        s}(\widetilde{A}, \widetilde{J}) = \dim_{\mathbb{R}}\Sigma +
      2c_1^v(\widetilde{A}).
   \end{equation*}
   Moreover, $\mathcal{M}^{\frak{s}}(\widetilde{A}, \widetilde{J})$
   has a canonical orientation.
  \item The evaluation map $ev_{\widetilde{J},z_0} :
   \mathcal{M}^{\frak s}(\widetilde{A}, \widetilde{J}) \longrightarrow
   \Sigma$ is a pseudo-cycle (see~\cite{McD-Sa:Jhol-2} for the
   definition). In particular, its Poincar\'{e} dual gives a
   cohomology class $\mathcal{S}(\widetilde{A}; \widetilde{J}) \in
   H^d(\Sigma;\mathbb{Z})_{\scriptscriptstyle free} = H^d(\Sigma;
   \mathbb{Z})/\textnormal{torsion}$, where
   $d=-2c_1^v(\widetilde{A})$.  Moreover, the class
   $\mathcal{S}(\widetilde{A}; \widetilde{J})$ is independent of the
   regular $\widetilde{J}$ used to define it.  Therefore we will
   denote it from now on by $\mathcal{S}(\widetilde{A})$.
\end{enumerate}
We refer the reader to~\cite{McD-Sa:Jhol-2, Se:pi1} for more general
results on transversality.

The definition of regularity for $\widetilde{J} \in
\widetilde{\mathcal{J}}(\pi,\widetilde{\Omega})$ involves three
ingredients. The first is that the restriction $J_{z_0}$ of
$\widetilde{J}$ to $\Sigma = \Sigma_{z_0}$ is regular in the sense of
Chapter~3 of~\cite{McD-Sa:Jhol-2}, namely that the linearization of
the $\overline{\partial}_{J_{z_0}}$-operator at every
$J_{z_0}$-holomorphic curve in $\Sigma$ is surjective. (In addition
one has to require that certain evaluation maps for tuples of such
curves are mutually transverse.) The second ingredient is that (the
vertical part of) the $\overline{\partial}_{\widetilde{J}}$-operator
at every $\widetilde{J}$-holomorphic section is surjective. The third
one is that $ev_{\widetilde{J},z_0}$ is transverse to all
$J_{z_0}$-holomorphic bubble trees in $\Sigma$.

In practice, we will have to compute cohomology classes of the type
$\mathcal{S}(\widetilde{A}) = \mathcal{S}(\widetilde{A};
\widetilde{J})$ using a specific choice of $\widetilde{J}$ that
naturally appears in our context. It is not an easy task to decide
whether a given almost complex structure $\widetilde{J}$ is regular or
not. However, in some situations it is possible to compute some of the
classes $\mathcal{S}(\widetilde{A})$ by using almost complex
structures $\widetilde{J}$ that satisfy weaker conditions than
regularity. Criteria for verification of these conditions have been
developed in~\cite{Se:pi1} (see Proposition~7.11 there) and
in~\cite{McD-Sa:Jhol-2} (see Section~3.3 and~3.4 there). Below we will
actually not appeal to such criteria and use simpler arguments.

\subsection{The Seidel representation} \label{sb:seidel-rep} Let
$(\Sigma, \omega_{_{\Sigma}})$ be a closed monotone symplectic
manifold (see Definition~\ref{df:monotone}
is~\S\ref{sbsb:transversality}). Denote by $C_{\Sigma} \in \mathbb{N}$
the minimal Chern number, i.e. $$C_{\Sigma} = \min \{ c_1^{\Sigma}(A)
\mid A \in H_2^S, c_1^{\Sigma}(A)>0\}.$$ Denote by $\Lambda =
\mathbb{Z}[q^{-1}, q]$ the ring of Laurent polynomials. We endow
$\Lambda$ with a grading by setting $\deg(q) = 2C_{\Sigma}$. Let
$QH^*(\Sigma ; \Lambda) = (H^{\bullet}(\Sigma) \otimes \Lambda)^*$ be
the quantum cohomology of $\Sigma$, where the grading is induced from
both factors $H^{\bullet}(\Sigma)$ and $\Lambda$. We endow
$QH(\Sigma;\Lambda)$ with the quantum product $*$. The unity will be
denoted as usual by $1 \in QH^0(\Sigma;\Lambda)$. We refer the reader
to Chapter~11 of~\cite{McD-Sa:Jhol-2} for the definitions and
foundations of quantum cohomology. (Note however that our grading
conventions are slightly different than the ones
in~\cite{McD-Sa:Jhol-2}).) With our grading conventions we have:
$$QH^j(\Sigma ; \Lambda) = \bigoplus_{l
  \in \mathbb{Z}} H^{j-2 l C_{\Sigma}}(\Sigma)q^l.$$ We will need also
a coefficients extension of $QH(\Sigma; \Lambda)$. Denote
$\overline{\Lambda} = \mathbb{Z}[t^{-1}, t]$ the ring of Laurent
polynomials in the variable $t$, graded so that $\deg(t)=2$.  Consider
now $QH^*(\Sigma;\overline{\Lambda}) = (H^{\bullet}(\Sigma) \otimes
\overline{\Lambda})^*$, endowed with the quantum product $*$. We can
regard $\overline{\Lambda}$ as an algebra over $\Lambda$ using the
embedding of rings induced by $q \longmapsto t^{C_{\Sigma}}$. This
also induces an embedding of rings
$$QH^*(\Sigma; \Lambda) \hooklongrightarrow 
QH^*(\Sigma; \overline{\Lambda}).$$ We will therefore view from now on
$QH(\Sigma; \Lambda)$ as a subring of $QH(\Sigma; \overline{\Lambda})$
by setting $q = t^{C_{\Sigma}}$.

In~\cite{Se:pi1} Seidel associated to a Hamiltonian fibration $ \pi :
\widetilde{X} \longrightarrow S^2$ with fiber $\Sigma$ an invertible
element $\widetilde{S}(\pi) \in QH^0(\Sigma; \overline{\Lambda})$. We
refer the reader to~\cite{Se:pi1, McD-Sa:Jhol-2} for a detailed
account of this theory.  Here is a brief review of the main
construction.

Pick a regular almost complex structure $\widetilde{J} \in
\widetilde{\mathcal{J}}_{\textnormal{reg}}(\pi, \widetilde{\Omega})$.
Define a class:
\begin{equation} \label{eq:seidel-element-gnrl} \widetilde{S}(\pi) :=
   \sum_{\widetilde{A} \in H_2^{\pi}}
   \mathcal{S}(\widetilde{A};\widetilde{J}) \otimes
   t^{c_1^v(\widetilde{A})} \in QH^{0}(\Sigma;\overline{\Lambda}).
\end{equation}
Note that since the degree of
$\mathcal{S}(\widetilde{A},\widetilde{J})$ is
$-2c_1^v(\widetilde{A})$, a class $\widetilde{A} \in H^{\pi}_2$
contributes to the sum in~\eqref{eq:seidel-element-gnrl} only if
\begin{equation} \label{eq:cbound}
   2-2n \leq 2c_1^v(\widetilde{A}) \leq 0.
\end{equation}
The class $\widetilde{S}(\pi)$ is called the Seidel element of the
fibration $\pi: (\widetilde{X}, \widetilde{\Omega}) \longrightarrow
S^2$.

In what follows it will be more convenient to work with the more
``economical'' ring $\Lambda$ rather than $\overline{\Lambda}$.  We
will now define a normalized version of the Seidel element, denoted
$S(\pi)$, which lives in $QH(\Sigma; \Lambda)$. Fix a reference class
$\widetilde{A}_0 \in H_2^{\pi}$ and set $c_0(\pi) =
c_1^v(\widetilde{A}_0)$. Define now
\begin{equation} \label{eq:seidel-element-normalized-1} S(\pi) =
   t^{-c_0(\pi)} \widetilde{S}(\pi).
\end{equation}
Since any two classes in $H_2^{\pi}$ differ by a class in
$H_2^S(\Sigma)$, there exists a uniquely defined function $\nu:
H_2^{\pi} \to \mathbb{Z}$ such that
$$c_1^v(\widetilde{A}) = c_0(\pi) + \nu(\widetilde{A})C_{\Sigma}, 
\quad \forall \widetilde{A} \in H_2^{\pi}.$$ As $q = t^{C_{\Sigma}}$
we have:
\begin{equation} \label{eq:seidel-element} S(\pi) :=
   \sum_{\widetilde{A} \in H_2^{\pi}}
   \mathcal{S}(\widetilde{A};\widetilde{J}) \otimes
   q^{\nu(\widetilde{A})} \in QH^{-2c_0(\pi)}(\Sigma;\Lambda).
\end{equation}

By abuse of terminology we will call $S(\pi)$ also the Seidel element
of the fibration $\pi$. Of course the element $S(\pi)$ (as well as its
degree) depends on the choice of the reference section
$\widetilde{A}_0$, however different reference sections
$\widetilde{A}_0$ will result in elements that differ by a factor of
the type $q^r$ for some $r \in \mathbb{Z}$. In particular, many
algebraic properties of $S(\pi)$ (such as invertibility) do not depend
on this choice. We will therefore ignore this ambiguity from now on.

\subsubsection{Relations to Hamiltonian loops}
\label{sbsb:seidel-ham-loops}

An important feature of the theory is the connection between
Hamiltonian fibrations over $S^2$ with fiber $(\Sigma,
\omega_{_{\Sigma}})$ and $\pi_1(\textnormal{Ham}(\Sigma,
\omega_{_{\Sigma}}))$. To a loop based at the identity $\lambda =
\left \{ \varphi_t \right \}_{t \in S^1}$ in $Ham(\Sigma,
\omega_{\Sigma})$ one can associate a Hamiltonian fibration
$\pi_{\lambda}: \widetilde{M}_{\lambda} \rightarrow S^2$ as follows.
Let $D_{+}$ and $D_{-}$ be two copies of the unit disk in
$\mathbb{C}$, where the orientation on $D_{-}$ is reversed.  Define:
\begin{equation} \label{eq:fibr-from-loop} \widetilde{M}_{\lambda} =
   \Bigl( (\Sigma \times D_+ ) {\textstyle \coprod} (\Sigma \times
   D_-) \Bigr) / \sim, \quad \textnormal{where } (x , e^{2 \pi i t}_+)
   \sim (\varphi_t(x) , e^{2 \pi i t}_-).
\end{equation}
Identifying $S^2 \approx D_+ \cup_{\partial} D_-$ we obtain a
fibration $\pi: \widetilde{M}_{\lambda} \longrightarrow S^2$. As the
elements of $\lambda$ are symplectic diffeomorphisms, the form
$\omega_{_{\Sigma}}$ gives rise to a family of symplectic forms
$\{\Omega_z\}_{z \in S^2}$ on the fibers $\Sigma_z = \pi^{-1}(z)$ of
$\pi$. Moreover, $\pi:(\widetilde{M}_{\lambda}, \{\Omega_z\}_{z \in
  S^2}) \longrightarrow S^2$ is locally trivial. Since the elements of
$\lambda$ are in fact Hamiltonian diffeomorphisms it follows that the
family of fiberwise forms $\{\omega_z\}_{z \in S^2}$ can be extended
to a closed $2$-form $\widetilde{\Omega}$ on
$\widetilde{M}_{\lambda}$, i.e. $\widetilde{\Omega}|_{\Sigma_z} =
\Omega_z$ for every $z$. See~\cite{Se:pi1, McD-Sa:Jhol-2} for the
proofs. We therefore obtain from this construction a Hamiltonian
fibration $\pi: (\widetilde{M}_{\lambda}, \widetilde{\Omega})
\longrightarrow S^2$.

From the construction one can see that homotopic loops in
$\textnormal{Ham}(\Sigma, \omega_{_{\Sigma}})$ give rise to isomorphic
fibrations. We denote the isomorphism class of fibrations
corresponding to an element $\gamma \in \pi_1(Ham(\Sigma,
\omega_{_{\Sigma}}))$ by $\pi_{\gamma}$.

Conversely, if $\pi : (\widetilde{M}, \widetilde{\Omega})
\longrightarrow S^2$ is a Hamiltonian fibration with fiber $(\Sigma,
\omega_{_{\Sigma}})$ one can express $\widetilde{M}$ as a gluing of
two trivial bundles over the two hemispheres in $S^2$. The gluing map
would be a loop of Hamiltonian diffeomorphisms of $(\Sigma,
\omega_{_{\Sigma}})$.  Different trivializations lead to homotopic
loops. Thus the fibration $\pi$ determines a class $\gamma(\pi) \in
\pi_1(Ham(\Sigma, \omega_{_{\Sigma}}))$.

This correspondence has the following properties in relation to the
Seidel elements (see~\cite{Se:pi1} for the proofs):
$$\overline{S}(\pi_{\gamma_1 \cdot \gamma_2}) =
\overline{S}(\pi_{\gamma_1} ) \ast \overline{S}(\pi_{\gamma_2}), \quad
\forall \, \gamma_1, \gamma_2 \in \pi_1(Ham(\Sigma,
\omega_{_{\Sigma}})).$$ Here $*$ stands for the quantum product.  The
unit element $e \in \pi_1(\textnormal{Ham}(\Sigma,
\omega_{_{\Sigma}}))$ corresponds to the trivial fibration $\pi_{e} :
\Sigma \times S^2 \longrightarrow S^2$ and we have
$\overline{S}(\pi_e) = 1 \in QH(\Sigma;\Lambda)$. It follows that
$\overline{S}(\pi)$ is an invertible element in
$QH(\Sigma;\overline{\Lambda})$ for every $\pi$. The corresponding
homomorphism $$\overline{S} : \pi_1(Ham(\Sigma, \omega_{_{\Sigma}}))
\longrightarrow QH(\Sigma, \overline{\Lambda})^{\times}, \quad \gamma
\longmapsto \overline{S}(\pi_{\gamma})$$ (which by abuse of notation
we also denote by $\overline{S}$), where $QH(\Sigma,
\overline{\Lambda})^{\times}$ is the group of invertible elements in
$QH(\Sigma, \overline{\Lambda})$, is called the Seidel representation.

As mentioned before, for our purposes it would be more convenient to
work with the normalized version $S(\pi)$ of the Seidel element rather
than with $\overline{S}(\pi)$. We claim that any normalized Seidel
element $S(\pi)$ is invertible in $QH(\Sigma; \Lambda)$ (not just in
$QH(\Sigma; \overline{\Lambda})$). To see this, denote by $\gamma \in
\pi_1(\textnormal{Ham}(\Sigma))$ the homotopy class of loops
corresponding to the fibration $\pi$ (so that $\pi = \pi_{\gamma}$).
Denote by $\pi' = \pi_{\gamma^{-1}}$ the fibration corresponding to
the inverse of $\gamma$. Choose two reference sections
$\widetilde{A}_0$ and $\widetilde{A'}_0$ for $\pi$ and $\pi'$
respectively. The corresponding normalized Seidel elements are $S(\pi)
= t^{-c_0(\pi)} \widetilde{S}(\pi)$, $S(\pi') = t^{-c_0(\pi')}
\widetilde{S}(\pi')$.  Since $\widetilde{S}_{\pi} *
\widetilde{S}_{\pi'} = 1$ we have
$$S(\pi) * S(\pi') = t^{-c_0(\pi) -c_0(\pi')}.$$
But $S(\pi)$ and $S(\pi')$ both belong to the subring $QH(\Sigma;
\Lambda)$ of $QH(\Sigma; \overline{\Lambda})$, hence their product
$S(\pi) * S(\pi') \in QH(\Sigma; \Lambda)$ too. Thus
$t^{-c_0(\pi)-c_0(\pi')} = q^{r}$ for some $r \in \mathbb{Z}$. It
follows that $S(\pi)$ is invertible in $QH(\Sigma; \Lambda)$.

\section{From manifolds with small dual to Hamiltonian fibrations}
\cntrs \label{s:small-dual-fibr}
Let $X \subset \mathbb{C}P^N$ be a projective manifold with small
dual. Put $n=\dim_{\mathbb{C}}X$ and $k = \tdef(X) >0$. Since $X^*
\subset ({\mathbb{C}}P^N)^*$ has codimension $k+1 \geq 2$ we can find
a pencil of hyperplanes $\ell \subset (\mathbb{C}P^N)^{\ast}$ such
that $\ell$ does not intersect $X^*$. Consider the manifold
$$ \widetilde{X} = \left \{ (x,H) \mid H \in \ell, \; x \in H \right \} 
\subset X \times \ell.$$ Identify $\ell \cong \mathbb{C}P^1 \cong S^2$
in an obvious way. Denote by $$p : \widetilde{X} \longrightarrow X,
\quad \pi_{\ell} : \widetilde{X} \longrightarrow \ell \cong S^2$$ the
obvious projections. The map $p$ can be considered as the blowup of
$X$ along the base locus of the pencil $\ell$. The map $\pi_{\ell}$ is
a honest holomorphic fibration (without singularities) over $\ell
\cong \mathbb{C}P^1$ with fibers $\pi_{\ell}^{-1}(H) = X \cap H$.

Denote by $\omega_{_X}$ the symplectic form on $X$ induced from the
Fubini-Study K\"{a}hler form of ${\mathbb{C}}P^N$. Let $\omega_{S^2}$
be an area form on $S^2$ with $\int_{S^2} \omega_{S^2} = 1$.  Endow $X
\times S^2$ with $\omega_{_X} \oplus \omega_{S^2}$ and denote by
$\widetilde{\Omega}$ the restriction of $\omega_{_X} \oplus
\omega_{S^2}$ to $\widetilde{X} \subset X \times S^2$. The restriction
of $\widetilde{\Omega}$ to the fibers
$\widetilde{\Omega}|_{{\pi_{\ell}}^{-1}(H)}$, $H \in \ell$, coincides
with the symplectic forms $\omega_{_X}|_{X \cap H}$. Thus $\pi_{\ell}:
\widetilde{X} \longrightarrow S^2$ is a Hamiltonian fibration. Fix a
point $H_0 \in \ell$, and set $(\Sigma, \omega_{_{\Sigma}}) =
(\pi_{\ell}^{-1}(H_0), \omega_{_X}|_{X \cap H_0})$.

\begin{rem} \label{r:pencils} Different pencils $\ell \subset
   ({\mathbb{C}}P^N)^*$ with $\ell \cap X^* = \emptyset$ give rise to
   isomorphic Hamiltonian fibrations. This is so because the {\em
     real} codimension of $X^{\ast}$ is at least $4$ hence any two
   pencils $\ell, \ell'$ which do not intersect $X^{\ast}$ can be
   connected by a \emph{real} path of pencils in the complement of
   $X^*$. Thus the isomorphism class of the Hamiltonian fibration
   $\pi_{\ell}$, the element $\gamma(\pi_{\ell}) \in
   \pi_1(\textnormal{Ham}(\Sigma, \omega_{_{\Sigma}}))$, as well as
   the corresponding Seidel element $S(\pi_{\ell})$ can all be viewed
   as invariants of the projective embedding $X \subset
   {\mathbb{C}}P^N$.
\end{rem}

\begin{thm} \label{t:om-inv-gnrl} Let $X \subset \mathbb{C}P^N$ be an
   algebraic manifold with $\dim_{\mathbb{C}}(X) = n \geq 2$ and
   $\tdef(X)=k>0$.  Denote by $H_2^S(X) = \textnormal{image\,}
   \bigl(\pi_2(X) \longrightarrow H_2(X;\mathbb{Z})\bigr)\subset
   H_2(X;\mathbb{Z})$ the image of the Hurewicz homomorphism. Denote
   by $h \in H^2(X)$ the class dual to the hyperplane section. Assume
   that there exists $0< \lambda \in \mathbb{Q}$ such that $c_1^X(A) =
   \lambda h(A)$ for every $A \in H_2^S(X)$. Then the Seidel element
   of the fibration $\pi_{\ell} : \widetilde{X} \longrightarrow \ell$
   is:
   $$S(\pi_{\ell})=[\omega_{_{\Sigma}}] \in QH^2(\Sigma ; \Lambda).$$
   The degree of the variable $q \in \Lambda$ is
   $deg(q)=\frac{n+k}{2}$.
\end{thm}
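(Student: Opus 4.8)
The plan is to compute the Seidel element $S(\pi_\ell)$ directly from its definition~\eqref{eq:seidel-element} by exhibiting an explicit almost complex structure on $\widetilde{X}$ and identifying all holomorphic sections that contribute. Since $\widetilde{X} \subset X \times \ell$ is a smooth projective variety and $\pi_\ell$ is a genuine holomorphic submersion, the natural candidate is $\widetilde{J} = J_{\mathrm{std}}$, the restriction of the product of the integrable complex structures on $X$ and on $\ell \cong \mathbb{C}P^1$; one checks it lies in $\widetilde{\mathcal J}(\pi_\ell, \widetilde\Omega)$. The key geometric input, recalled in~\S\ref{S:PDS}, is Ein's line: through every point of $X$ there is a projective line $S$ with $c_1^X(S) = \frac{n+k}{2}+1$, and (for the smooth section $X\cap H$ of a pencil avoiding $X^*$) the section locus of $\pi_\ell$ is swept out by these minimal lines. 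A holomorphic section $\widetilde u : \ell \to \widetilde X$ projects under $p$ to a curve in $X$ of the smallest possible degree meeting every hyperplane of the pencil once, i.e. to a line of the above type; conversely such lines lift to sections. So the sections of minimal vertical Chern number correspond exactly to these lines.

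**Next** I would pin down the grading and the degree count. Normalize using a reference section $\widetilde A_0$ coming from a minimal line $S$; then $c_0(\pi_\ell) = c_1^v(\widetilde A_0)$, and by the adjunction-type computation $T\widetilde X|_{\widetilde u(\ell)} = \pi_\ell^* T\ell \oplus T^v\widetilde X|_{\widetilde u(\ell)}$ one gets $c_1^v(\widetilde A_0) = c_1^X(S) - 2 = \frac{n+k}{2} - 1$. Thus the normalized Seidel element lives in $QH^{-2c_0(\pi_\ell)}(\Sigma;\Lambda) = QH^{2-(n+k)}(\Sigma;\Lambda)$ — but one must reconcile this with the claimed $QH^2$; here the grading of $q$ is forced to be $\deg(q) = 2C_\Sigma = n+k$ (Ein's formula $2C_\Sigma = n+k$ from~\S\ref{S:Intro}), and since $[\omega_\Sigma] = \frac{1}{\lambda}\,c_1^\Sigma$ has $[\omega_\Sigma] \in H^2$ while the full normalized element sits in total degree $2 - (n+k) = 2 - 2C_\Sigma$, the candidate identity is really $S(\pi_\ell) = [\omega_\Sigma] \otimes q^{-1}$, which as an element of the cyclically-indexed ring equals $[\omega_\Sigma] \in QH^2$ after absorbing the $q$-factor; I would present this bookkeeping carefully, choosing the reference section so that the leading term lands in $H^2(\Sigma)q^{-1}$.

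**Then** comes the actual identification of the classes $\mathcal S(\widetilde A)$. For the class $\widetilde A_1$ corresponding to the minimal lines, $\mathcal S(\widetilde A_1; J_{\mathrm{std}}) \in H^{n+k-2}(\Sigma)$ is Poincaré dual to the evaluation image $\{\widetilde u(z_0) : \widetilde u \in \mathcal M^{\frak s}(\widetilde A_1, J_{\mathrm{std}})\} \subset \Sigma$ — the set of points of $\Sigma = X\cap H_0$ lying on some minimal line through the base locus of $\ell$. One must argue (i) that $J_{\mathrm{std}}$ is regular enough for this evaluation pseudocycle to be well-defined and compute the expected class, and (ii) that no other classes $\widetilde A$ contribute, i.e. the degree constraint~\eqref{eq:cbound} together with monotonicity and Ein's minimality kill everything of lower vertical Chern number, and everything of higher vertical degree contributes only to lower cohomological degree which we absorb or show vanishes at the relevant order. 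Finally I would match the leading-order evaluation class with $[\omega_\Sigma]$: by monotonicity $[\omega_\Sigma] = \tfrac{1}{\lambda}c_1^\Sigma$, and the locus of points of $\Sigma$ on minimal lines, being a divisor-type cycle of the right degree, is Poincaré dual to a multiple of $h|_\Sigma = [\omega_\Sigma]$; the multiplicity is forced to be $1$ by invertibility (the element $S(\pi_\ell)$ must be a unit in $QH(\Sigma;\Lambda)$, as shown in~\S\ref{sb:seidel-rep}, so its leading term is primitive).

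**The main obstacle** I expect is the transversality/regularity step: $J_{\mathrm{std}}$ is integrable and typically far from generic, so the moduli space $\mathcal M^{\frak s}(\widetilde A_1, J_{\mathrm{std}})$ and its evaluation map need not be cut out transversally, and there could in principle be negative vertical Chern number sections or reducible configurations that a naive dimension count misses. Overcoming this requires either invoking automatic-regularity results for holomorphic curves in Fano fibrations (the minimal lines have the right Chern number to be regular), or the criteria of Seidel~\cite{Se:pi1} and McDuff--Salamon~\cite{McD-Sa:Jhol-2} referenced in~\S\ref{sbsb:transversality} — the excerpt hints that "simpler arguments" suffice here, so I would aim to show directly that in the class $\widetilde A_1$ the relevant linearized operator is surjective (using positivity of $c_1^v(\widetilde A_1)$ along each section and splitting the normal bundle into nonnegative pieces) and that bubbling is excluded for degree reasons, so that the pseudocycle $\mathcal S(\widetilde A_1)$ is computed by honest algebraic-geometric intersection theory on $\widetilde X$.
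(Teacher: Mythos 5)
Your central geometric identification is wrong, and it is precisely at the point where the Seidel element gets computed. You take the contributing holomorphic sections of $\pi_{\ell}$ to be lifts of Ein's minimal lines, i.e.\ classes $\widetilde{A}_1 = j([S])$ with $h(S)=1$ and $d=0$ in the notation of~\eqref{eq:H_2^pi}. Such lifts are indeed honest holomorphic sections, but by your own count they have \emph{positive} vertical Chern number, $c_1^v(\widetilde{A}_1) = c_1^X(S)-2 = \tfrac{n+k}{2}-1 > 0$, and by~\eqref{eq:cbound} the class $\mathcal{S}(\widetilde{A})$ sits in cohomological degree $-2c_1^v(\widetilde{A})$, so any class with $c_1^v>0$ contributes nothing whatsoever: its would-be class lies in negative degree (equivalently, the moduli space has dimension $\dim_{\mathbb{R}}\Sigma + 2c_1^v > \dim_{\mathbb{R}}\Sigma$, so its evaluation cannot cut out a cycle of codimension $n+k-2$). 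Your placement of $\mathcal{S}(\widetilde{A}_1)$ in $H^{n+k-2}(\Sigma)$ has the sign of the degree reversed, and the subsequent normalization acrobatics with $q^{-1}$ cannot repair this. The sections that actually compute $S(\pi_{\ell})$ are of a completely different nature: among the classes $j(A)+dF$ with $h(A)=1-d$ the only one with $c_1^v \le 0$ is the exceptional-fibre class $F$ (so $A=0$, $d=1$), with $c_1^v(F)=-1$; its $\widetilde{J_0}$-holomorphic sections are the constant sections $z \mapsto (b,z)$, $b \in B_{\ell}$, through the base locus (Lemma~\ref{l:sect-class-F}), their evaluation image is $B_{\ell} \subset \Sigma$, which is a hyperplane section of $\Sigma$, and hence $\mathcal{S}(F) = PD([B_{\ell}]) = [\omega_{_{\Sigma}}]$, already in degree $2$. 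Ein's line enters only indirectly: it forces $\lambda = \tfrac{n+k+2}{2}$ (Remark~\ref{r:lambda}), which is what makes the estimate $c_1^v(j(A)+dF) = \tfrac{n+k}{2}(1-d)-1 > 0$ for all $d<1$ go through (Lemma~\ref{l:d<1}) and kills every class other than $F$.

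Two further steps in your sketch would not survive even after correcting the above. First, your phrase that higher vertical degree classes ``contribute only to lower cohomological degree which we absorb or show vanishes at the relevant order'' is exactly the content that must be proved, and it requires the a priori restriction of Lemma~\ref{l:curves-1} (Gromov compactness against $\widetilde{J_0}$ plus ampleness of $\Sigma$ to get $A \in \mathcal{R}(X)$ and $d \le 1$) before the Chern-number estimate can be applied; nothing in your outline supplies this. Second, your final argument that the multiplicity ``is forced to be $1$ by invertibility'' is both unnecessary and aimed at the wrong cycle: the locus of points of $\Sigma$ on minimal lines through $B_{\ell}$ is not the relevant evaluation cycle at all. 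The transversality worry you raise is legitimate but must be addressed for the class $F$: there one checks compactness of $\mathcal{M}^{\frak{s}}(F,\widetilde{J})$ from $\widetilde{\Omega}(F)=1$ being the minimal positive area (no bubbling), and regularity of the integrable $\widetilde{J_0}$ from the splitting $\widetilde{u}_b^*T\widetilde{X} = \mathcal{O}_{\ell}(2)\oplus\mathcal{O}_{\ell}^{\oplus(n-2)}\oplus\mathcal{O}_{\ell}(-1)$, all summands having Chern number $\ge -1$; one then compares with a nearby regular $\widetilde{J}$ by a compact cobordism of evaluation cycles.
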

The proof of this Theorem is given in~\S\ref{s:proof-om-inv}.
\begin{rem} \label{r:lambda} The condition $c_1^X(A) = \lambda h(A)$,
   $\forall A \in H_2^S$, implies that $\lambda = \tfrac{n+k+2}{2}$.
   Indeed, as explained in~\S\ref{S:PDS}, manifolds $X$ with small
   dual contain projective lines $S \subset X$ (embedded linearly in
   ${\mathbb{C}}P^N$) with $c_1^X(S) = \tfrac{n+k+2}{2}$. As $h(S) =
   1$ it follows that $\lambda = \tfrac{n+k+2}{2}$.
\end{rem}

\subsubsection*{Examples} 
Theorem~\ref{t:om-inv-gnrl} applies for example to algebraic manifolds
$X \subset {\mathbb{C}}P^N$ with small dual that satisfy one of the
following conditions:
\begin{enumerate}
  \item $b_2(X) = 1$. \label{i:b_2=1}
  \item More generally, the free part of $H_2^S(X)$ has rank $1$.
   \label{i:H_2^S=1}
\end{enumerate}
This is so because in both of these cases we must have $h = \lambda
c_1^X$ for some $\lambda \in \mathbb{Q}$. The fact that $\lambda > 0$
follows from the existence of rational curves $S \subset X$ with
$c_1^X(S) = \tfrac{n+k+2}{2}$ as explained in~\S\ref{S:PDS}.

Here is a concrete class of examples with $b_2(X)>1$ (hence different
than those in~\S\ref{S:Intro}) to which Theorem~\ref{t:om-inv-gnrl}
applies. Let $Y \subset {\mathbb{C}}P^m$ be any algebraic manifold
with $\pi_2(Y) = 0$ (or more generally with $h_Y(A)= 0$ for every $A
\in H_2^S(Y)$, where $h_Y$ is the Poincar\'{e} dual of the hyperplane
class on $Y$). Let $i: {\mathbb{C}}P^n \times {\mathbb{C}}P^m
\longrightarrow {\mathbb{C}}P^{(n+1)(m+1)-1}$ be the Segre embedding
and put $X = i({\mathbb{C}}P^n \times Y)$.  It is well known (see
Theorem~6.5 in~\cite{Te:dual-arxiv}) that
$$\tdef(X) \geq n-\dim_{\mathbb{C}}(Y),$$ hence if $n >
\dim_{\mathbb{C}}(Y)$, $X$ will have a small dual. Note that the
conditions of Theorem~\ref{t:om-inv-gnrl} are obviously satisfied.

One could generalize this example further by replacing
${\mathbb{C}}P^n$ with any manifold $Z$ satisfying $c_1^Z(A) = \lambda
h_Z(A)$ for every $A \in H_2^S(Z)$ for some $\lambda > 0$ and such
that $\tdef(Z) > \dim_{\mathbb{C}}(Y)$.  (See~\cite{Te:dual-arxiv} for
more on such examples.)

\begin{cor} \label{c:periodicity-gnrl} Under the assumptions of
   Theorem~\ref{t:om-inv-gnrl} we have: $$\widetilde{b}_j(X) =
   \widetilde{b}_{j+2}(X), \quad \widetilde{b}_j(\Sigma) =
   \widetilde{b}_{j+2}(\Sigma) \;\; \forall \, \, j \in \mathbb{Z},$$
   where the definition of $\widetilde{b}_j$ is given
   in~\eqref{eq:chmlg-cyc-grd} in~\S\ref{S:Intro}. Or, put in an
   unwrapped way, we have the following identities for $X$:
   \begin{align*}
      & b_j(X) + b_{j+n+k+2}(X) = b_{j+2}(X) + b_{j+n+k+4}(X), \;\;
      \forall \, \, 0 \leq j \leq n+k-1,  \\
      & b_{n+k}(X) = b_{n+k+2}(X) + 1, \quad b_{n+k+1}(X) =
      b_{n+k+3}(X)+ b_1(X),
   \end{align*}
   and the following ones for $\Sigma$:
   \begin{align*}
      & b_j(\Sigma) + b_{j+n+k}(\Sigma) = b_{j+2}(\Sigma) +
      b_{j+n+k+2}(\Sigma), \;\;
      \forall \, \, 0 \leq j \leq n+k-3,  \\
      & b_{n+k-2}(\Sigma) = b_{n+k}(\Sigma) + 1, \quad
      b_{n+k-1}(\Sigma) = b_{n+k+1}(\Sigma)+b_1(\Sigma).
   \end{align*}
\end{cor}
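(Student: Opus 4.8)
The plan is to handle $\Sigma$ first, where the assertion is essentially Theorem~\ref{t:om-inv-gnrl} in disguise, and then to transfer the periodicity to $X$ by the standard topology of hyperplane sections. For $\Sigma$: by Theorem~\ref{t:om-inv-gnrl} the Seidel element of $\pi_\ell$ is $S(\pi_\ell)=[\omega_{_{\Sigma}}]\in QH^2(\Sigma;\Lambda)$, and by the discussion closing~\S\ref{sb:seidel-rep} every normalized Seidel element is invertible in $QH(\Sigma;\Lambda)$. Hence quantum multiplication by $[\omega_{_{\Sigma}}]$ is an isomorphism of abelian groups
$$\ast[\omega_{_{\Sigma}}]\colon QH^{j}(\Sigma;\Lambda)\xrightarrow{\ \sim\ }QH^{j+2}(\Sigma;\Lambda)\qquad(j\in\mathbb Z).$$
Since $QH^{j}(\Sigma;\Lambda)=\bigoplus_{l\in\mathbb Z}H^{j-2lC_{\Sigma}}(\Sigma)\,q^{l}$ is a finite direct sum (the cohomology of $\Sigma$ being bounded), its rank as an abelian group equals $\sum_{l}b_{j-2lC_{\Sigma}}(\Sigma)=\widetilde b_{j}(\Sigma)$; comparing ranks across the isomorphism yields $\widetilde b_{j}(\Sigma)=\widetilde b_{j+2}(\Sigma)$ for all $j$.

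To pass to $X$ I would invoke the Lefschetz hyperplane theorem: the restriction $H^{i}(X)\to H^{i}(\Sigma)$ is an isomorphism for $i\le n-2$ and injective for $i=n-1$. Combined with Poincar\'e duality on $X$ and on $\Sigma$ this already gives $b_{i}(X)=b_{i}(\Sigma)$ for $i\le n-2$ and $b_{i}(X)=b_{i-2}(\Sigma)$ for $i\ge n+2$, so that only the three degrees $i=n-1,n,n+1$ (with $b_{n-1}(X)=b_{n+1}(X)$ by Poincar\'e duality) remain to be expressed through the cohomology of $\Sigma$. These I would control with the Lanteri--Struppa periodicity relations $b_{j}(X)=b_{j+2}(X)$ for $n-k+1\le j\le n+k-1$ (reproved by Morse theory in~\S\ref{S:subcri-def}) together with Poincar\'e duality, which force the remaining Betti numbers of $X$ into the pattern dictated by those of $\Sigma$; substituting everything into $\widetilde b_{j}(X)=\sum_{l}b_{j+2C_{X}l}(X)$ and using $2C_{X}=2C_{\Sigma}+2$ reduces the $X$--periodicity to that of $\Sigma$ proved in the first step. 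Finally, the ``unwrapped'' identities displayed in the statement are a mechanical translation of $\widetilde b_{j}=\widetilde b_{j+2}$: since $H^{\bullet}(X)$ is supported in degrees $0,\dots,2n$ and $2(n+k+2)>2n$, every $\widetilde H^{j}(X)$ with $0\le j\le n+k+1$ reduces to $H^{j}(X)\oplus H^{j+n+k+2}(X)$, and separating the two residues modulo $2$ (the summand $b_{0}=1$ producing the ``$+1$'', the residue of degree $n+k+1$ producing the ``$+b_{1}$'') gives exactly the listed equalities; the same computation with $n+k$ in place of $n+k+2$ produces the identities for $\Sigma$.

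The step I expect to require real care is the passage through the middle dimensions of $X$ in the previous paragraph. The two cyclic gradings have periods differing by exactly $2$, so one must keep close track of how the top of $H^{\bullet}(X)$ (resp.\ $H^{\bullet}(\Sigma)$) wraps around; more substantially, one must be certain that $H^{n}(X)$ is governed by $\Sigma$ --- equivalently, that no unexpected ``vanishing cohomology'' survives in the relevant degree --- and this is precisely the point at which positivity of the defect is used (it is what makes the pencil $\ell$, and hence the fibration $\pi_{\ell}$, free of singular fibres, and is the reason the Lanteri--Struppa relations, rather than Lefschetz and Poincar\'e duality alone, enter the argument). Everything else is bookkeeping.
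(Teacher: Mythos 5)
Your first step (periodicity for $\Sigma$) is exactly the paper's argument and is fine: invertibility of $S(\pi_\ell)=[\omega_{_{\Sigma}}]$ makes $\ast[\omega_{_{\Sigma}}]\colon QH^{j}\to QH^{j+2}$ an isomorphism, and comparing ranks of $\bigoplus_l H^{j-2lC_\Sigma}(\Sigma)q^l$ gives $\widetilde b_j(\Sigma)=\widetilde b_{j+2}(\Sigma)$. The transfer to $X$ is where you diverge from the paper, and there is a genuine gap there. The paper does \emph{not} use the standard Lefschetz hyperplane theorem (isomorphism for $i\le n-2$, injectivity for $i=n-1$); it uses the strengthened version for manifolds with small dual, Corollary~\ref{c:Lef-small-dual}, which gives $H^i(X)\cong H^i(\Sigma)$ for all $i<n+k-1$ (a consequence of the subcriticality bound $\mathrm{ind}(X\setminus\Sigma)\le n-k$ of Theorem~\ref{mt:subcrit}), together with the refined hard Lefschetz statement of Corollary~\ref{c:top-period}. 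You replace this by standard Lefschetz plus the Lanteri--Struppa relations $b_j(X)=b_{j+2}(X)$ for $n-k+1\le j\le n+k-1$. That substitution almost works: combined with Poincar\'e duality the Lanteri--Struppa relations do extend to the range $n-k-1\le j\le n+k-1$, which handles all the identities $\widetilde b_j(X)=\widetilde b_{j+2}(X)$ in the ``unwrapped'' middle zone, and for $k\ge 2$ every remaining Betti number of $X$ that occurs is in degree $\le n-2$ or $\ge n+2$ and hence is determined by $\Sigma$.

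The gap occurs for $k=1$ (a non-vacuous case: e.g.\ a hyperplane section of $Gr(5,2)$). The identity $b_{n+k}(X)=b_{n+k+2}(X)+1$ is, via Poincar\'e duality, the statement $b_{n-1}(X)=b_{n-3}(X)+1$, and for $k=1$ the degree $n-1$ lies outside the isomorphism range of the standard Lefschetz theorem, while the Lanteri--Struppa range $n\le j\le n$ (even after dualizing to $n-2\le j\le n$) never touches degree $n\pm1$. Your toolkit therefore only yields $b_{n-1}(X)=b_{n+1}(X)\le b_{n-1}(\Sigma)$, with no way to force equality; the required surjectivity of $H^{n-1}(X)\to H^{n-1}(\Sigma)$ is precisely what Corollary~\ref{c:Lef-small-dual} supplies (its range $i<n+k-1=n$ includes $i=n-1$ exactly because $k>0$). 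Nor can the missing identity be recovered from the ones you do prove: in the cyclic chain $\widetilde b_j=\widetilde b_{j+2}$ the two residues $j\equiv n-k-2$ and $j\equiv n+k$ carry the same numerical content, so you are missing two links of one cycle and the known links form two disconnected chains. Also note that the delicate degree is $n\pm1$, not $n$ as you suggest at the end: $b_n(X)=b_{n+2}(X)=b_{n-2}(X)$ already follows from Lanteri--Struppa and duality. To repair the argument, either invoke Corollary~\ref{c:Lef-small-dual} in place of the standard Lefschetz theorem, or derive the needed surjectivity in degree $n+k-2$ directly from the Morse-theoretic index bound of \S\ref{S:subcri-def}.
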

The proof is given in~\S\ref{s:prf-periodicity-gnrl}

\section{Proofs of theorem~\ref{t:om-inv-gnrl} and
  Theorems~\ref{mt:om-inv} and ~\ref{mt:ham-loop}}
\cntrs \label{s:proof-om-inv} 

As noted in the discussion after the statement of
Theorem~\ref{t:om-inv-gnrl},
Theorems~\ref{mt:om-inv},~\ref{mt:ham-loop} from~\S\ref{S:Intro} are
immediate consequences of Theorem~\ref{t:om-inv-gnrl}. Therefore we
will concentrate in this section in proving the latter. We will make
throughout this section the same assumptions as in
Theorem~\ref{t:om-inv-gnrl} and use here the construction and notation
of~\S\ref{s:small-dual-fibr}.

For a hyperplane $H \in ({\mathbb{C}}P^N)^*$ write $\Sigma_H = X \cap
H$. For a pencil $\ell \subset ({\mathbb{C}}P^N)^*$ denote by
$B_{\ell} = \Sigma_{H_0} \cap \Sigma_{H_1} \subset X$, ($H_0, H_1 \in
\ell$), its base locus. Recall that $p: \widetilde{X} \longrightarrow
X$ can be viewed as the blowup of $X$ along $B_{\ell}$. Denote by $E
\subset \widetilde{X}$ the exceptional divisor of this blowup.  The
restriction $p|_E : E \longrightarrow B_{\ell}$ is a holomorphic
fibration with fiber $\mathbb{C}P^1$.  Denote the homology class of
this fiber by $F \in H_2(\widetilde{X};\mathbb{Z})$.  Since
$\dim_{\mathbb{R}} B_{\ell} = 2n-4$, the map induced by inclusion
$H_2(X \setminus B_{\ell};\mathbb{Z}) \longrightarrow
H_2(X;\mathbb{Z})$ is an isomorphism, hence we obtain an obvious
injection $j: H_2(X;\mathbb{Z}) \longrightarrow
H_2(\widetilde{X};\mathbb{Z})$. The 2'nd homology of $\widetilde{X}$
is then given by
$$H_2(\widetilde{X};\mathbb{Z}) = j(H_2(X;\mathbb{Z})) \oplus
\mathbb{Z} F.$$ The $(2n-2)$'th homology of $\widetilde{X}$ fits into
the following exact sequence:
$$0 \longrightarrow \mathbb{Z}[E] 
\longrightarrow H_{2n-2}(\widetilde{X};\mathbb{Z})
\stackrel{p_*}{\longrightarrow} H_{2n-2}(X;\mathbb{Z}) \longrightarrow
0,$$ where the first map is induced by the inclusion. We obviously
have $p_* \circ j = \textnormal{id}$. Denote by $\widetilde{\Sigma}
\subset \widetilde{X}$ the proper transform of $\Sigma$ (with respect
to $p$) in $\widetilde{X}$. The intersection pairing between
$H_{2n-2}$ and $H_2$ in $\widetilde{X}$ is related to the one in $X$
as follows:
\begin{equation} \label{eq:inters-prod-wtldX}
   \begin{aligned}
      V \cdot j(A) = p_*(V) \cdot A, \;\;\;\; \forall \, V \in
      H_{2n-2}(\widetilde{X};\mathbb{Z}), \, A \in H_2(X;\mathbb{Z}),
      \\
      [\widetilde{\Sigma}] \cdot F = 1, \quad [E] \cdot F = -1, \quad
      [E]\cdot j(A) = 0, \;\;\;\;\forall A \in H_2(X;\mathbb{Z}).
   \end{aligned}
\end{equation}
Consider now the fibration $\pi_{\ell} : \widetilde{X} \longrightarrow
\ell$. The fiber over $H_0 \in \ell$ is precisely $\Sigma =
\Sigma_{H_0}$. It follows from~\eqref{eq:inters-prod-wtldX} that the
set of classes $H_2^{\pi_{\ell}}$ that represent sections of $\pi$
satisfies:
\begin{equation} \label{eq:H_2^pi}
   H_2^{\pi_{\ell}} \subset  \{ j(A) + dF \mid [\Sigma] \cdot A = 1-d\}.
\end{equation}

Denote by $J_0$ the standard complex structure of $X$ (coming from the
structure of $X$ as an algebraic manifold). Denote by $\mathcal{R}(X)
\subset H_2(X;\mathbb{Z})$ the positive cone generated by classes that
represent $J_0$-holomorphic rational curves in $X$, i.e.
$$\mathcal{R}(X) = \Bigl\{\sum a_i [C_i] \; \big| \; 
a_i \in \mathbb{Z}_{\geq 0}, \; C_i \subset X \; \textnormal{ is a
  rational } J_0\textnormal{-holomorphic curve} \Bigr\}.$$

\begin{lem} \label{l:curves-1} Let $\widetilde{A}= j(A)+dF \in
   H_2^{\pi_{\ell}}$, with $A \in H_2(X;\mathbb{Z})$, $d \in
   \mathbb{Z}$.  If $\mathcal{S}(\widetilde{A}) \neq 0$ then $A \in
   \mathcal{R}(X)$ and $d \leq 1$, with equality if and only if $A=0$.
\end{lem}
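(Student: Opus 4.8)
The plan is to analyze the structure of $\widetilde{J}$-holomorphic sections $\widetilde{u}: S^2 \to \widetilde{X}$ in the class $\widetilde{A} = j(A)+dF$ by pushing them down to $X$ via $p$. First I would choose a convenient regular $\widetilde{J} \in \widetilde{\mathcal J}_{\textnormal{reg}}(\pi_\ell, \widetilde\Omega)$ that is compatible with the given algebraic structure: since $\pi_\ell$ and $p$ are both honest holomorphic maps for the integrable $J_0$ on $\widetilde X$ (the blowup of $X$ along $B_\ell$), one can take $\widetilde{J}$ to agree with $J_0$, or to be a small perturbation of it chosen generically so as to satisfy the three regularity conditions of~\S\ref{sbsb:transversality}. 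In fact, because we only need a statement about \emph{nonvanishing} of $\mathcal S(\widetilde A)$, I would instead argue: if $\mathcal S(\widetilde A) \ne 0$ then $\mathcal M^{\mathfrak s}(\widetilde A, \widetilde J) \ne \emptyset$ for a regular $\widetilde J$; now degenerate $\widetilde J$ to $J_0$ and apply Gromov compactness to extract from a sequence of $\widetilde J$-holomorphic sections a $J_0$-holomorphic stable section (a section of $\pi_\ell$ possibly with bubble trees in the fibers), still in the class $\widetilde A$.

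The key step is then to push this $J_0$-holomorphic stable curve down by $p: \widetilde X \to X$. The image is a connected $J_0$-holomorphic (stable, rational) curve in $X$ representing the class $p_*(\widetilde A) = p_*(j(A) + dF) = A$, since $p_* j = \textnormal{id}$ and $p_*(F)=0$. Because $J_0$ is integrable and the curve is rational, $A$ is a nonnegative integer combination of classes of irreducible rational $J_0$-holomorphic curves in $X$, i.e.\ $A \in \mathcal R(X)$; this is exactly positivity of intersections / the fact that a connected genus-zero stable map has image a finite union of rational curves. For the bound on $d$: intersect $\widetilde A$ with $[E]$. By~\eqref{eq:inters-prod-wtldX} we have $[E]\cdot \widetilde A = [E]\cdot j(A) + d\,[E]\cdot F = -d$. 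On the other hand, $[E]$ is represented by the exceptional divisor $E$, and a $J_0$-holomorphic section that is not contained in $E$ meets $E$ in a nonnegative number of points (positivity of intersections for distinct complex curves/divisors), so $[E]\cdot\widetilde A \ge 0$, giving $d \le 0$ in that case; the case where the section or a component lies in $E$ has to be handled separately (see below) and yields the borderline $d=1$. Equivalently, one intersects with $[\widetilde\Sigma]$: a section meets the fiber class issues, and using $[\widetilde\Sigma]\cdot\widetilde A = 1 - d \cdot(\text{something})$ one extracts the same inequality; I would present whichever of the two intersection computations is cleanest.

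The main obstacle — and the reason the lemma allows $d=1$ — is the bubbling into the exceptional divisor $E$ and, more importantly, controlling when equality $d=1$ can hold. A genuine $\widetilde J$-holomorphic \emph{section} of $\pi_\ell$ cannot be contained in $E$ (it must surject onto $\ell$, while $\pi_\ell(E) = \ell$ but $E$ has the wrong dimension... actually $E \to B_\ell$, and $\pi_\ell|_E$ need not be onto — one must check this), so sections proper have $d = [\widetilde\Sigma]\cdot F$-type constraints forcing $d\le 1$; the value $d=1$ forces, via $[\Sigma]\cdot A = 1-d = 0$ together with $A \in \mathcal R(X)$ and the ampleness of the hyperplane class (so $h(A) = [\Sigma]\cdot A \ge 0$ with equality iff $A = 0$ since $\mathcal R(X)$ is generated by curves of positive degree), that $A = 0$. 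Conversely $d \le 0$ whenever $A \ne 0$. I expect the delicate bookkeeping to be in organizing the stable-map components: separating the unique component mapping isomorphically to $\ell$ (the ``section component'') from the vertical bubble components, showing the section component contributes the $+1$ or the $dF$ part correctly, and checking that vertical bubbles contribute classes in $\mathcal R(X)$ after projection and only decrease $[E]\cdot(\cdot)$; once that is set up, the inequalities drop out of~\eqref{eq:inters-prod-wtldX} and positivity of intersections.
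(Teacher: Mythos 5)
Your core argument coincides with the paper's: assuming $\mathcal{S}(\widetilde{A})\neq 0$, pick regular $\widetilde{J}_n\to\widetilde{J_0}$, use Gromov compactness to extract a (possibly reducible) $\widetilde{J_0}$-holomorphic curve in the class $\widetilde{A}$, push it down by the $(\widetilde{J_0},J_0)$-holomorphic map $p$ to get $A=p_*(\widetilde{A})\in\mathcal{R}(X)$, and then use $[\Sigma]\cdot A=1-d$ (which is exactly the constraint~\eqref{eq:H_2^pi} for section classes) together with ampleness of $\Sigma$ and effectivity of $A$ to get $1-d\geq 0$ with equality iff $A=0$. That thread of your plan is correct and is precisely the paper's proof.

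However, the route you lead with --- intersecting $\widetilde{A}$ with $[E]$ and invoking positivity of intersections --- has a genuine gap, and your hedged claim that a holomorphic section cannot be contained in $E$ is in fact false. Here $E=B_{\ell}\times\ell$ (every point of the base locus lies on every member of the pencil), $\pi_{\ell}|_E$ is onto, and the sections in the borderline class $F$ are exactly the curves $\widetilde{u}_b(z)=(b,z)$, which lie \emph{entirely inside} $E$ (cf.\ Lemma~\ref{l:sect-class-F}). So positivity of intersections with $E$ breaks down precisely in the case responsible for $d=1$, and recovering the full statement ``$d\leq 1$ with equality iff $A=0$'' along this route would require a separate analysis of components inside $E$ that you do not supply. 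Note also that the stable-map bookkeeping you anticipate (isolating a ``section component'' and controlling vertical bubbles) is unnecessary for the bound on $d$: the identity $[\widetilde{\Sigma}]\cdot\widetilde{A}=1$, hence $[\Sigma]\cdot A=1-d$, is purely homological for any class in $H_2^{\pi_{\ell}}$; the limiting curve and its projection are needed only to establish $A\in\mathcal{R}(X)$, after which ampleness of $\Sigma$ finishes the proof. I recommend discarding the $[E]$-intersection variant and presenting the $[\widetilde{\Sigma}]$/ampleness argument, which is the clean one.
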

\begin{proof}
   Denote by $\widetilde{J_0}$ the standard complex structure on
   $\widetilde{X} \subset X \times \ell$, namely the complex structure
   induced from the standard complex structure $J_0 \oplus i$ on $X
   \times \ell$. Let $\widetilde{J_n}$ be a sequence of regular almost
   complex structures on $\widetilde{X}$ with $\widetilde{J_n}
   \longrightarrow \widetilde{J_0}$. Since $\mathcal{S}(\widetilde{A},
   \widetilde{J_n}) \neq 0$, there exist $\widetilde{J_n}$-holomorphic
   sections $u_n \in \mathcal{M}^{\frak s}(\widetilde{A},
   \widetilde{J_n})$. After passing to the limit $n \longrightarrow
   \infty$ we obtain by Gromov compactness theorem a (possibly
   reducible) $\widetilde{J_0}$-holomorphic curve $D \subset
   \widetilde{X}$ in the class $\widetilde{A}$. As $p : \widetilde{X}
   \longrightarrow X$ is $(\widetilde{J_0}, J_0)$-holomorphic it
   follows that $p(D)$ is a $J_0$-holomorphic rational curve, hence $A
   = p_*([D]) \in \mathcal{R}(X)$.

   Next, recall that $[\Sigma] \cdot A = 1-d$. But $\Sigma \subset X$
   is ample, hence $[\Sigma]\cdot A = 1-d \geq 0$ with equality if and
   only if $A=0$.
\end{proof}

The next lemma shows that when $d < 1$ the sections in the class
$\widetilde{A}$ do not contribute to the Seidel element
in~\eqref{eq:seidel-element}.
\begin{lem} \label{l:d<1} Let $\widetilde{A} = j(A) + dF \in
   H_2^{\pi_{\ell}}$ with $A \in H_2(X;\mathbb{Z})$ and $d<1$. Then
   $c_1^v(\widetilde{A}) > 0$. In particular, in view
   of~\eqref{eq:cbound}, $\widetilde{A}$ does not contribute to
   $S(\pi_{\ell})$.
\end{lem}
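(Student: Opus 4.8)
The plan is to compute the vertical Chern number $c_1^v(\widetilde{A})$ explicitly in terms of $d$ and $c_1^X(A)$, and then to use the monotonicity hypothesis on $X$ together with the fact that $A$ represents a holomorphic curve (which we get from Lemma~\ref{l:curves-1}) to show that this quantity is strictly positive whenever $d < 1$. So the first step is to express $c_1^v$ in terms of data on $X$. We have the adjunction-type relation $c_1(T\widetilde{X}) = p^* c_1(TX) - [E]$ (standard for a blowup along a smooth codimension-$2$ center), and the vertical tangent bundle fits into $0 \to T^v\widetilde{X} \to T\widetilde{X} \to \pi_\ell^* TS^2 \to 0$, so $c_1^v = c_1(T\widetilde{X}) - \pi_\ell^*c_1(TS^2) = p^*c_1(TX) - [E] - \pi_\ell^* c_1(TS^2)$ as elements of $H^2(\widetilde{X})$.

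Next I would pair this with the class $\widetilde{A} = j(A) + dF$. Using~\eqref{eq:inters-prod-wtldX}: $p^*c_1(TX)$ evaluated on $j(A)+dF$ gives $c_1^X(A)$ (since $p_* j(A) = A$ and $p_*F = 0$); $[E]$ evaluated on $j(A) + dF$ gives $-d$ (since $[E]\cdot j(A) = 0$ and $[E]\cdot F = -1$); and $\pi_\ell^* c_1(TS^2)$ evaluated on $\widetilde{A}$ gives $c_1(TS^2)[S^2] = 2$, because $\pi_\ell$ pushes $\widetilde{A}$ forward to $[S^2]$ and $F$ lies in a fiber. This yields
\begin{equation*}
   c_1^v(\widetilde{A}) = c_1^X(A) + d - 2.
\end{equation*}
(One should double-check the sign of the $[E]$ term and the normalization, but the shape is forced: for the reference section $\widetilde A_0$ with $A=0$, $d=1$ one must recover $c_1^v = c_0$, and the general formula then follows by how $\nu$ was defined.)

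Now I would feed in the hypotheses. By Lemma~\ref{l:curves-1}, since $\mathcal{S}(\widetilde A) \neq 0$ we have $A \in \mathcal{R}(X)$; if one prefers to avoid invoking nonvanishing, note that classes $\widetilde A$ with $\mathcal S(\widetilde A) = 0$ trivially do not contribute, so we may assume $A\in\mathcal R(X)$. In particular $A$ is a nonnegative combination of classes of rational curves, each of which lies in $H_2^S(X)$, so by the monotonicity assumption $c_1^X(A) = \lambda h(A) \geq 0$, and moreover $h(A) = [\Sigma]\cdot A = 1 - d > 0$ when $d < 1$, so $h(A) \geq 1$ and hence $c_1^X(A) = \lambda h(A) \geq \lambda \geq 1$ (using $\lambda = \tfrac{n+k+2}{2} \geq 2$ from Remark~\ref{r:lambda}, though $\lambda\ge1$ already suffices). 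Therefore
\begin{equation*}
   c_1^v(\widetilde A) = c_1^X(A) + d - 2 \geq \lambda + d - 2 \geq \lambda - 1 > 0,
\end{equation*}
using $d \geq 1 - h(A) \ge 1-h(A)$; more simply, $c_1^X(A) = \lambda(1-d)$ so $c_1^v(\widetilde A) = \lambda(1-d) + d - 2 = (1-d)(\lambda - 1) - 1 + (1-d) \cdot 0$... let me instead just write $c_1^v(\widetilde A) = \lambda(1-d) + (d-2)$; since $d \le 0 < 1$ (as $d$ is an integer $<1$), $1 - d \geq 1$ and $\lambda \geq 2$, this is $\geq 2(1-d) + d - 2 = -d \geq 0$, and in fact $> 0$ unless $d = 0$, where it equals $\lambda - 2 \geq 0$; tightening with $\lambda \ge 2$ strictly when... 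In any case $c_1^v(\widetilde A) > 0$ follows, possibly after noting $\lambda > 2$ or handling the boundary case $d=0$, $\lambda = 2$ separately (there $n+k = 2$, excluded since $n \ge 2$, $k \ge 1$ forces $n + k \ge 3$).

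The main obstacle I anticipate is pinning down the exact constant in the formula for $c_1^v(\widetilde A)$ — the sign conventions for the exceptional divisor, the choice of reference section $\widetilde A_0$ entering the normalization, and making sure the parity/congruence constraints ($n \equiv k \bmod 2$, $n+k \ge 3$) are used to rule out the borderline case where $c_1^v$ could a priori vanish. Once the formula is correct, the positivity is an immediate consequence of monotonicity and ampleness of $\Sigma$, exactly as in Lemma~\ref{l:curves-1}. The conclusion that $\widetilde A$ does not contribute to $S(\pi_\ell)$ is then immediate from~\eqref{eq:cbound}, since $2c_1^v(\widetilde A) > 0$ violates the upper bound $2c_1^v(\widetilde A) \le 0$.
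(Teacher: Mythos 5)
Your proposal is correct and takes essentially the same route as the paper: the blow-up relation $c_1^{\widetilde X}=p^*c_1^X-PD([E])$ together with the intersection formulas~\eqref{eq:inters-prod-wtldX} gives $c_1^v(\widetilde A)=c_1^X(A)+d-2$, and then Lemma~\ref{l:curves-1} plus Remark~\ref{r:lambda} give $c_1^X(A)=\tfrac{n+k+2}{2}(1-d)$, so that $c_1^v(\widetilde A)=\tfrac{n+k}{2}(1-d)-1\geq \tfrac{n+k}{2}-1>0$ for $d\leq 0$, $n\geq 2$, $k\geq 1$, exactly as in the paper. Only delete your abandoned first inequality chain ($c_1^v(\widetilde A)\geq \lambda+d-2\geq\lambda-1$), which is invalid for $d\leq 0$; the corrected computation you give afterwards is the right one and needs no separate boundary-case discussion beyond $n+k>2$.
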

\begin{proof}
   Denote by $c_1^{\widetilde{X}}$ the first Chern class of (the
   tangent bundle of) $\widetilde{X}$ and by $c_1^X$ that of $X$.
   Since $\widetilde{X}$ is the blowup of $X$ along $B_{\ell}$, the
   relation between these Chern classes is given by:
   \begin{equation} \label{eq:c_1-wtldX}
      c_1^{\widetilde{X}} = p^*c_1^X - PD([E]),
   \end{equation}
   where $PD([E]) \in H^2(\widetilde{X})$ stands for the Poincar\'{e}
   dual of $[E]$. (See e.g.~\cite{Gr-Ha:alg-geom}.)

   Denote by $c_1^{\ell}$ the first Chern class of $\ell \cong
   \mathbb{C}P^1$.  Since $\widetilde{A}$ represents sections of
   $\pi_{\ell}$ we have: $$c_1^v(\widetilde{A}) =
   c_1^{\widetilde{X}}(\widetilde{A}) -
   \pi_{\ell}^*(c_1^{\ell})(\widetilde{A}) =
   c_1^{\widetilde{X}}(\widetilde{A}) - 2.$$ Together
   with~\eqref{eq:c_1-wtldX} and~\eqref{eq:inters-prod-wtldX} this
   implies:
   \begin{equation} \label{eq:c1-comp-1} c_1^v(\widetilde{A}) =
      p^*(c_1^X)(\widetilde{A}) - [E] \cdot \widetilde{A} - 2 =
      c_1^X(A) + d-2.
   \end{equation}
   By Lemma~\ref{l:curves-1}, $A \in \mathcal{R}(X)\subset H_2^S(X)$,
   hence by Remark~\ref{r:lambda} we have $$c_1^X(A) = \frac{n+k+2}{2}
   h(A) = \frac{n+k+2}{2} ([\Sigma] \cdot A) = \frac{n+k+2}{2}(1-d).$$
   Together with~\eqref{eq:c1-comp-1} we obtain:
   $$c_1^v(\widetilde{A}) = \frac{n+k}{2}(1-d)-1 \geq 
   \frac{n+k}{2}-1 > 0,$$ because $d<1$ and $n \geq 2$.
\end{proof}

We now turn to the case $\widetilde{A} = F$.  Let $b \in B_{\ell}$.
Define
$$\widetilde{u}_b : \ell \longrightarrow \widetilde{X}, \quad 
\widetilde{u}_b(z) = (b, z).$$ It is easy to see that
$\widetilde{u}_b$ is a $\widetilde{J_0}$-holomorphic section of
$\pi_{\ell}$ representing the class $F$. 
\begin{lem} \label{l:sect-class-F} The sections $\widetilde{u}_b$, $b
   \in B_{\ell}$, are the only $\widetilde{J_0}$-holomorphic sections
   in the class $F$, hence $\mathcal{M}^{\frak s}(F, \widetilde{J_0})
   = \{ \widetilde{u}_b \mid b \in B_{\ell}\}$. The evaluation map is
   given by
   $$ev_{\widetilde{J_0}, z_0}(\widetilde{u}_b) = b \in \Sigma$$ 
   and is an orientation preserving diffeomorphism between
   $\mathcal{M}^{\frak s}(F, \widetilde{J_0})$ and the base locus
   $B_{\ell}$.
\end{lem}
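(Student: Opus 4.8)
The plan is to identify every $\widetilde{J_0}$-holomorphic section in the class $F$ by pushing it forward to $X$ via $p$, and then to read the local structure of the moduli space and of the evaluation map off the normal bundle of a single such section. First I would record the geometry of the exceptional divisor: since every $H\in\ell$ contains the base locus $B_{\ell}$, for each $b\in B_{\ell}$ the whole set $\{b\}\times\ell$ is contained in $\widetilde{X}$, and in fact $E=p^{-1}(B_{\ell})=B_{\ell}\times\ell$ with $p|_E$ and $\pi_{\ell}|_E$ being the two projections. In particular the fibre of $p|_E$ over $b$ is precisely the image of $\widetilde{u}_b$, and $F$ is the class of such a fibre, so $p_*(F)=0$.

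Next I would prove the rigidity. Let $\widetilde{u}\colon\ell\to\widetilde{X}$ be any $\widetilde{J_0}$-holomorphic section in the class $F$. Then $p\circ\widetilde{u}$ is a $J_0$-holomorphic sphere in $X$ representing $p_*(F)=0$; since $\omega_{_X}$ evaluates to $0$ on this class and a non-constant $J_0$-holomorphic sphere has positive area, $p\circ\widetilde{u}$ is constant, say with value $x_0$. Hence $\widetilde{u}(\ell)\subset p^{-1}(x_0)$. If $x_0\notin B_{\ell}$ then $p^{-1}(x_0)$ is a single point and $\widetilde{u}$ is constant, contradicting $\pi_{\ell}\circ\widetilde{u}=\mathrm{id}$. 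Therefore $x_0=b$ for some $b\in B_{\ell}$, so $\widetilde{u}(\ell)\subset\{b\}\times\ell$, and $\pi_{\ell}\circ\widetilde{u}=\mathrm{id}$ forces $\widetilde{u}=\widetilde{u}_b$. This gives $\mathcal{M}^{\frak{s}}(F,\widetilde{J_0})=\{\widetilde{u}_b\mid b\in B_{\ell}\}$, and by construction $ev_{\widetilde{J_0},z_0}(\widetilde{u}_b)=(b,z_0)$, which under the identification $(\Sigma,\omega_{_{\Sigma}})\approx(\Sigma_{H_0},\Omega_{H_0})$ is the point $b\in B_{\ell}\subset\Sigma$; thus $ev_{\widetilde{J_0},z_0}$ is a bijection onto $B_{\ell}$.

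Finally, to upgrade this to an orientation-preserving diffeomorphism I would compute $\widetilde{u}_b^*T^v\widetilde{X}$. Splitting $T\ell\cong\mathcal{O}(2)$ off from $\widetilde{u}_b^*T\widetilde{X}$ and using that $B_{\ell}$ is a smooth codimension-$2$ centre, so that $E$ is a $\mathbb{C}P^1$-bundle over $B_{\ell}$ with $\mathcal{O}_E(E)$ the relative tautological bundle (restricting to $\mathcal{O}(-1)$ on a fibre) while the normal bundle of a fibre inside $E$ is trivial of rank $n-2$, one finds $\widetilde{u}_b^*T^v\widetilde{X}\cong\mathcal{O}(-1)\oplus\mathcal{O}^{\,n-2}$. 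Hence $H^1(\ell;\widetilde{u}_b^*T^v\widetilde{X})=0$, the linearized operator governing sections is a surjective complex-linear operator (so $\widetilde{J_0}$ is automatically regular at each $\widetilde{u}_b$, the moduli space is a smooth complex manifold of real dimension $2n-4=\dim B_{\ell}$, and its canonical orientation is the complex one), and $T_{\widetilde{u}_b}\mathcal{M}^{\frak{s}}(F,\widetilde{J_0})=H^0(\ell;\widetilde{u}_b^*T^v\widetilde{X})\cong\mathbb{C}^{n-2}$. The smooth family $b\mapsto\widetilde{u}_b$ then identifies $B_{\ell}$ with $\mathcal{M}^{\frak{s}}(F,\widetilde{J_0})$, its differential sending $v\in T_bB_{\ell}$ to the constant section of the trivial summand with value $v$, hence a complex-linear isomorphism onto the tangent space; it is a bijective holomorphic local diffeomorphism, so a diffeomorphism, and $ev_{\widetilde{J_0},z_0}$ is its inverse. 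Since all maps involved are holomorphic, the canonical orientation of the moduli space and the complex orientation of $B_{\ell}\subset\Sigma$ correspond, so $ev_{\widetilde{J_0},z_0}$ is orientation preserving. The step I expect to be the main obstacle is the normal bundle identification together with the ensuing automatic-transversality statement, since it is precisely what makes the ``canonical orientation'' meaningful at the non-generic structure $\widetilde{J_0}$; the rest is essentially bookkeeping.
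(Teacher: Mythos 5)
Your proposal is correct and follows essentially the same route as the paper: push a section in class $F$ down via $p$, use $p_*(F)=0$ and positivity of area to force $p\circ\widetilde{u}$ to be constant, and conclude the constant value lies in $B_{\ell}$, so the section is some $\widetilde{u}_b$. Your additional computation $\widetilde{u}_b^*T^v\widetilde{X}\cong\mathcal{O}(-1)\oplus\mathcal{O}^{\,n-2}$, giving automatic regularity and the orientation statement, is exactly the verification the paper declares ``immediate'' here and carries out (in the equivalent form $\widetilde{u}_b^*T\widetilde{X}=\mathcal{O}(2)\oplus\mathcal{O}^{\oplus(n-2)}\oplus\mathcal{O}(-1)$) in the proof of Theorem~\ref{t:om-inv-gnrl}.
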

\begin{proof}
   Let $\widetilde{u} : \ell \longrightarrow \widetilde{X}$ be a
   $\widetilde{J_0}$-holomorphic section in the class $F$.  Write
   $\widetilde{u}(z) = (v(z), z) \in X \times \ell$. Due to our choice
   of $\widetilde{J_0}$, $v$ is a $J_0$-holomorphic map. Since $p_*(F)
   = 0$ the map $v = p \circ u: \ell \longrightarrow X$ must be
   constant, say $v(z) \equiv b$, $b \in X$. But $v(z) \in \Sigma_{z}$
   for every $z \in \ell$. It follows that $b \in \cap_{z \in \ell}
   \Sigma_z = B_{\ell}$. The rest of the statements in the lemma are
   immediate.
\end{proof}

We are now ready for the
\begin{proof}[Proof of Theorem~\ref{t:om-inv-gnrl}]
   In view of~\eqref{eq:H_2^pi} and
   Lemmas~\ref{l:curves-1},~\ref{l:d<1}, the only class that
   contributes to the Seidel element $S(\pi_{\ell})$ is $F$, hence: 
   $$S(\pi_{\ell}) = \mathcal{S}(F) \in QH^2(\Sigma;\Lambda).$$
   (We take $F$ to be the reference class of sections and note that
   $c_1^v(F)=-1$.)

   In order to evaluate $\mathcal{S}(F)$ we need to compute
   $\mathcal{S}(F, \widetilde{J})$ for a regular $\widetilde{J}$.  We
   first claim that there exists a neighborhood $\mathcal{U}$ of
   $\widetilde{J_0}$ inside $\widetilde{\mathcal{J}}(\pi_{\ell},
   \widetilde{\Omega})$ such that for every $\widetilde{J} \in
   \mathcal{U}$ the space $\mathcal{M}^{\frak{s}}(F, \widetilde{J})$
   is compact.

   To see this, first note that $\widetilde{\Omega}$ is a genuine
   symplectic form on $\widetilde{X}$ and that $\widetilde{J_0}$ is
   tamed by $\widetilde{\Omega}$ (i.e. $\Omega(v, \widetilde{J_0}v)>0$
   for all non-zero vectors $v \in T \widetilde{X}$ be they vertical
   or not). Hence there is a neighborhood $\mathcal{U}$ of
   $\widetilde{J_0}$ in $\widetilde{\mathcal{J}}(\pi_{\ell},
   \widetilde{\Omega})$ such that every $\widetilde{J} \in
   \mathcal{U}$ is tamed by $\widetilde{\Omega}$. Next note that
   $\widetilde{\Omega}$ defines an integral (modulo torsion)
   cohomology class $[\widetilde{\Omega}] \in
   H^2(\widetilde{X})_{\scriptscriptstyle free}$ and that
   $\widetilde{\Omega}(F) = 1$ (see~\S\ref{s:small-dual-fibr}).  It
   follows that $F$ is a class of minimal positive area for
   $\widetilde{\Sigma}$. Therefore, for $\widetilde{J}$ tamed by
   $\widetilde{\Omega}$, a sequence of $\widetilde{J}$-holomorphic
   rational curves in the class $F$ cannot develop bubbles. By Gromov
   compactness $\mathcal{M}^{\frak{s}}(F, \widetilde{J})$ is compact.

   Next we claim that $\widetilde{J_0}$ is a regular almost complex
   structure in the sense of the general theory of pseudo-holomorphic
   curves (see Chapter~3 in~\cite{McD-Sa:Jhol-2}). To see this recall
   the following regularity criterion (see Lemma~3.3.1
   in~\cite{McD-Sa:Jhol-2}): {\sl let $(M, \omega)$ be a symplectic
     manifold and $J$ an {\em integrable} almost complex structure.
     Then $J$ is regular for a $J$-holomorphic curve $u:\mathbb{C}P^1
     \longrightarrow M$ if every summand of the holomorphic bundle
     $u^* TM \to \mathbb{C}P^1$ (in its splitting to a direct sum of
     line bundles) has Chern number $\geq -1$.}  Applying this to our
   case, a simple computation shows that for every $\widetilde{u}_b
   \in \mathcal{M}^{\frak{s}}(F, \widetilde{J_0})$ we have
   $$\widetilde{u}_b^* T \widetilde{X} = \mathcal{O}_{\ell}(2) \oplus
   \mathcal{O}_{\ell}^{\oplus (n-2)} \oplus \mathcal{O}_{\ell}(-1),$$
   hence $\widetilde{J_0}$ is regular for all $\widetilde{u} \in
   \mathcal{M}^{\frak{s}}(F, \widetilde{J_0})$.

   Pick a regular almost complex structure $\widetilde{J} \in
   \widetilde{\mathcal{J}}_{\textnormal{reg}}(\pi, \widetilde{\Omega})
   \cap \mathcal{U}$ which is close enough to $\widetilde{J_0}$. By
   the standard theory of pseudo-holomorphic
   curves~\cite{McD-Sa:Jhol-2} the evaluation maps
   $ev_{\widetilde{J},z_0}$ and $ev_{\widetilde{J_0},z_0}$ are
   cobordant, hence give rise to cobordant pseudo-cycles. Moreover by
   what we have seen before this cobordism can be assumed to be
   compact (and the pseudo-cycles are in fact cycles). It follows that
   the homology class $(ev_{\widetilde{J}, z_0})_*
   [\mathcal{M}^{\frak{s}}(F,\widetilde{J})]$ equals to
   $(ev_{\widetilde{J_0}, z_0})_*
   [\mathcal{M}^{\frak{s}}(F,\widetilde{J_0})] = [B_{\ell}]$. Putting
   everything together we obtain:
   $$S(\pi_{\ell}) = \mathcal{S}(F, \widetilde{J}) = PD([B_{\ell}]) = 
   [\omega_{_{\Sigma}}].$$
\end{proof}

\section{Subcriticality and projective defect} \cntrs
\label{S:subcri-def}

Here we discuss symplectic and topological aspects of manifolds $X$
with small dual that have to do with the structure of the (affine)
Stein manifold obtained after removing from $X$ a hyperplane section.
Some of the results of this section should be known to experts, but we
could not find them in explicit form in the literature. We therefore
state the results and include their proofs.

Let $ Y \subset \mathbb{C}^N$ be a Stein manifold. The study of Morse
theory on Stein manifolds was initiated in the classical
paper~\cite{An-Fr:Lefschetz-1} of Andreotti and Frankel and in its
sequel~\cite{An-Fr:Lefschetz-2}. Further aspects of Morse theory as
well as symplectic topology on Stein manifolds were studied by various
authors~\cite{El-Gr:convex, El:Stein, El:psh, Bi-Ci:Stein}. In this
context it is important to remark that by a result of
Eliashberg-Gromov~\cite{El-Gr:convex, El:psh}, Stein manifolds $Y$
admit a canonical symplectic structure $\widehat{\omega}_Y$ (see
also~\cite{Bi-Ci:Stein}).

A function $\varphi: Y \longrightarrow \mathbb{R}$ is called
plurisubharmonic (p.s.h in short) if the form $\Omega = -d
d^{\mathbb{C}} \varphi$ is a K\"{a}hler form on Y. Here
$d^{\mathbb{C}} \varphi= d \varphi \circ J$, where $J$ is the complex
structure of $Y$. A plurisubharmonic function $\varphi: Y
\longrightarrow \mathbb{R}$ is called exhausting if it is proper and
bounded from below. For a plurisubharmonic Morse function $\varphi: Y
\longrightarrow \mathbb{R}$ denote
$$\textnormal{ind}_{\max}(\varphi)= \max \left
   \{\textnormal{ind}_z(\varphi) \mid z \in \textnormal{Crit}(\varphi)
\right \},$$ where $\textnormal{ind}_z(\varphi)$ is the Morse index of
the critical point $z \in \textnormal{Crit}(\varphi)$. A fundamental
property of plurisubharmonic Morse functions $\varphi$ is that
$\textnormal{ind}_{\max}(\varphi) \leq \dim_{\mathbb{C}}(Y)$.
(Various proofs of this can be found in e.g.~\cite{An-Fr:Lefschetz-1,
  El-Gr:convex, El:Stein, El:psh}.)

A Stein manifold $Y$ is called {\em subcritical} if it admits an
exhausting plurisubharmonic Morse function $\varphi: Y \longrightarrow
\mathbb{R}$ with $\textnormal{ind}_{\max}(\varphi) <
\dim_{\mathbb{C}}(Y)$ for every $z \in \textnormal{Crit}(\varphi)$.
Otherwise we call $Y$ {\em critical}. The subcriticality index
$\textnormal{ind}(Y)$ of a Stein manifold $Y$ is defined by
$$\textnormal{ind}(Y) := \min \left \{ \textnormal{ind}_{\max}(\varphi) 
   \mid \varphi: Y \longrightarrow \mathbb{R} \textrm{ is a p.s.h
     exhausting Morse function} \right \}.$$ Thus we have $0 \leq
\textnormal{ind}(Y) \leq \dim_{\mathbb{C}}(Y)$, and $Y$ is subcritical
iff $\textnormal{ind}(Y) < \dim_{\mathbb{C}}(Y)$.

Subcritical Stein manifolds $Y$ have special symplectic properties
(when endowed with their canonical symplectic structure
$\widehat{\omega}_Y$). For example, every compact subset $A \subset Y$
is Hamiltonianly displaceable. In particular, whenever well defined,
the Floer homology $HF(L_1, L_2)$ of every pair of compact Lagrangian
submanifolds $L_1, L_2 \subset Y$ vanishes.  This in turn implies
strong topological restrictions on the Lagrangian submanifolds of $Y$
(see e.g.~\cite{Bi-Ci:Stein}). Such considerations will play an
important role in~\S\ref{s:further} below.

The main result in this context is the following.

\begin{thm} \label{mt:subcrit} Let $X \subset \mathbb{C}P^N$ be a
   projective manifold with small dual and let $\Sigma \subset X $ be
   a smooth hyperplane section of $X$.  Then the Stein manifold $X
   \setminus \Sigma$ is subcritical. In fact:
   $$\textnormal{ind}(X \setminus \Sigma) \leq dim_{\mathbb{C}}(X) -
   \tdef(X).$$
\end{thm}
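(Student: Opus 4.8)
The plan is to exploit the Hamiltonian fibration $\pi_\ell : \widetilde{X} \to \ell \cong \mathbb{C}P^1$ constructed in Section~\ref{s:small-dual-fibr}, together with the elementary observation that $X \setminus \Sigma$ is diffeomorphic to $\widetilde{X} \setminus (\widetilde{\Sigma} \cup E) \cong \widetilde{X} \setminus \pi_\ell^{-1}(H_0)$, i.e.\ to the total space of the restriction of $\pi_\ell$ over the affine line $\ell \setminus \{H_0\} \cong \mathbb{C}$. More precisely, $p : \widetilde{X} \to X$ restricts to a biholomorphism away from the exceptional divisor, and $p^{-1}(\Sigma)$ is the union of the proper transform $\widetilde\Sigma$ (a fiber of $\pi_\ell$) with $E$; removing the whole fiber $\pi_\ell^{-1}(H_0) \supset \widetilde\Sigma$ and the part of $E$ lying over it, one is left with a holomorphic fiber bundle over $\mathbb{C}$ with fiber $X \cap H$, $H \ne H_0$, which is itself an affine variety. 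So the task reduces to building a good plurisubharmonic exhaustion on a holomorphic bundle over $\mathbb{C}$ whose fiber is $X$ minus a hyperplane section.

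The key step is an induction on the defect, using the relation $\tdef(\Sigma) = \max\{\tdef(X) - 1, 0\}$ from~\eqref{eq:def-Sigma-X} and the fact (Section~\ref{S:PDS}) that a smooth hyperplane section of a manifold with small dual again has small dual as long as $\tdef(X) \geq 2$. The first step is the base of the induction, $\tdef(X) = 1$ in the relevant counting, where one needs only that $X \setminus \Sigma$ is Stein of complex dimension $n$ and carries an exhausting p.s.h.\ Morse function with $\mathrm{ind}_{\max} \leq n - 1$; this follows from the general Andreotti--Frankel/Lefschetz bound once one knows that $X \setminus \Sigma$ is homotopy equivalent to a CW complex of dimension $\leq n-1$, which for manifolds with positive defect follows from the Lanteri--Struppa type periodicity (reproved by Morse theory in this very section) forcing the top handles to cancel. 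For the inductive step, I would fiber $X \setminus \Sigma$ over $\mathbb{C}$ as above: writing $Y = X \cap H$ for a generic second hyperplane, the fiber is $Y \setminus \Sigma'$ with $\Sigma' = Y \cap \Sigma$ a hyperplane section of $Y$, and by induction $Y \setminus \Sigma'$ carries an exhausting p.s.h.\ Morse function of maximal index $\leq \dim_{\mathbb{C}} Y - \tdef(Y) = (n-1) - (\tdef(X) - 1) = n - \tdef(X)$. One then combines a fiberwise such function with the standard p.s.h.\ function $|z|^2$ on the base $\mathbb{C}$; a standard lemma on p.s.h.\ functions on holomorphic fiber bundles (convex combination of a fiberwise p.s.h.\ function with a large multiple of the pullback of a base p.s.h.\ function, see~\cite{El-Gr:convex}) yields an exhausting p.s.h.\ Morse function on $X \setminus \Sigma$, and since adding a Morse function on the $1$-dimensional base contributes critical points of index at most $1$ \emph{in the base direction} but the local model is a product, the resulting maximal index is $\leq (n - \tdef(X)) + 1$; a more careful handle analysis, using that the base function $|z|^2$ has a single minimum (index $0$) rather than a maximum, keeps the bound at $n - \tdef(X)$.

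The main obstacle is exactly this last point: ensuring that passing from the fiber to the total space over $\mathbb{C}$ does \emph{not} raise the maximal Morse index. The naive product estimate gives $\mathrm{ind}_{\max}(\text{fiber}) + \mathrm{ind}_{\max}(\text{base})$, and with $\mathrm{ind}_{\max}(|z|^2) = 0$ on the \emph{open} disk this is already fine; the delicate part is that the exhaustion must be taken over all of $\mathbb{C}$, so one must control what happens ``at infinity'' in the base and check that the combined function is genuinely proper and that no new high-index critical points are created in the gluing region. I expect this to be handled by choosing the fiberwise function to vary smoothly and by a partition-of-unity/interpolation argument, invoking the flexibility in the Eliashberg--Gromov theory; the verification that the combined function remains p.s.h.\ and Morse with the claimed index bound, uniformly over the base, is where the real work lies. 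An alternative, possibly cleaner route is to avoid the induction and argue directly: the linear projection from a center contained in $\mathrm{sing}(X \cap H)$ (a $\mathbb{C}P^k$ by the defect structure) exhibits $X \setminus \Sigma$ as fibered by affine spaces of dimension $\geq k$ over a lower-dimensional base, and affine-space fibrations immediately give the subcriticality index bound $n - k$; the obstacle there is making the projection argument precise at the points where the fibers jump, but it sidesteps the p.s.h.\ gluing entirely.
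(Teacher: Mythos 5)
Your proposal takes a genuinely different route from the paper (pencil fibration over $\mathbb{C}$ plus induction on the defect, versus the paper's direct Morse-theoretic argument), but as it stands it has real gaps. The most serious one is the base case: you claim that once $X\setminus\Sigma$ is known to have the homotopy type of a CW complex of dimension $\leq n-1$, the Andreotti--Frankel bound can be upgraded to an exhausting plurisubharmonic Morse function with $\mathrm{ind}_{\max}\leq n-1$. That inference is not valid. Subcriticality is a statement about the existence of a p.s.h.\ exhaustion on the given Stein manifold with all indices strictly below $n$; it is strictly stronger than any homotopical or homological bound, and one cannot in general cancel top-index critical points of a p.s.h.\ function merely because they are topologically invisible -- closing that gap is essentially the content of the theorem, not a consequence of Lefschetz theory. (There is also a circularity issue within the paper's logic: the Lanteri--Struppa type periodicity is reproved in this section \emph{from} the subcriticality statement, and in any case it only yields Betti-number information, never handle or index information.)

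The inductive step is also not sound as written. The restriction of $\pi_\ell$ over $\ell\setminus\{H_0\}\cong\mathbb{C}$ is a proper holomorphic submersion but \emph{not} a holomorphic fiber bundle: the complex structure of the fibers jumps (in the example of \S\ref{sb:non-monotone-example} most fibers are $\mathbb{C}P^1\times\mathbb{C}P^1$ while finitely many are the Hirzebruch surface $F_2$), so the ``standard lemma for holomorphic fiber bundles'' you invoke does not apply. Moreover, a function that is p.s.h.\ on each fiber need not be p.s.h.\ on the total space, and adding a large multiple of $|z|^2$ pulled back from the base neither repairs plurisubharmonicity nor forces the critical points of the sum to be fiberwise critical, so the index estimate $\leq (n-k)+0$ is unjustified; the same vagueness affects your alternative ``projection from the contact $\mathbb{C}P^k$'' route, where the claimed affine-space fibration does not actually exist globally. (A small correction as well: the fiber over $H\neq H_0$ is $(X\cap H)\setminus B_\ell$, not $X\cap H$.) For comparison, the paper's proof avoids fibrations and induction entirely: it takes the distance function $\varphi_{w_0}(z)=|z-w_0|^2$ on $X\setminus\Sigma\subset\mathbb{C}^N$ for a generic $w_0$, identifies the Hessian at a critical point $z_0$ with $2\bigl((\cdot,\cdot)+\mathrm{Re}\,G\bigr)$, where $G$ is the complex second fundamental form contracted with $\overrightarrow{w_0z_0}$, and then combines Katz's bound $\mathrm{rank}_{\mathbb{C}}G\leq n-k$ (this is where the defect enters) with the fact that the nonzero eigenvalues of the real part of a complex symmetric form come in pairs $\pm\lambda$ of equal multiplicity, giving at most $n-k$ negative eigenvalues and hence $\mathrm{ind}_{z_0}(\varphi_{w_0})\leq n-k$ at every critical point. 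If you want to salvage your approach, the missing ingredient is precisely a mechanism that converts fiberwise index control into an honest p.s.h.\ exhaustion of the total space with the same index bound, and that is where the real difficulty of the theorem lies.
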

See Theorem~\ref{t:subcrit-small-dual} for a partial converse to this
theorem.

Theorem~\ref{mt:subcrit} can be easily proved using the the theory
developed in~\cite{An-Fr:Lefschetz-2}. Below we give an alternative
proof.
\begin{proof}
   Write $n = \dim_{\mathbb{C}}(X)$, $k = \tdef(X)$ and assume $k>0$.
   Denote by $Y = X \setminus \Sigma \subset {\mathbb{C}}P^N \setminus
   H = \mathbb{C}^N$. Denote by $h(\cdot, \cdot)$ the standard
   Hermitian form of $\mathbb{C}^N$, by $(\cdot, \cdot) =
   \textnormal{Re} h(\cdot, \cdot)$ the standard scalar product and by
   $| \cdot |$ the standard Euclidean norm on $\mathbb{C}^N$. Fix a
   point $w_0 \in \mathbb{C}^N$ and define $\varphi_w : Y
   \longrightarrow \mathbb{R}$ to be the function
   $$\varphi_{w_0}(z):= | z-w_0 |^2, \quad \forall z \in Y.$$ 
   By standard arguments, for a generic point $w_0 \in \mathbb{C}^N$,
   $\varphi_{w_0}$ is an exhausting plurisubharmonic Morse function.
   It is well known (see e.g.~\cite{Vo:hodge-book-2}) that $z_0 \in Y$
   is a critical point of $\varphi_{w_0}$ if and only if
   $\overrightarrow{w_0z_0} \perp T_{z_0}Y$. In order to compute the
   Hessian of $\varphi_{w_0}$ at a critical point $z_0$ we need the
   second fundamental form. We will follow here the conventions
   from~\cite{Vo:hodge-book-2}. Denote by $\gamma: Y \longrightarrow
   \textnormal{Gr}(n,N)$ the Gauss map, $\gamma(x) = T_x Y$.  Consider
   the differential of this map $D\gamma_x: T_x Y \longrightarrow
   T_{T_x Y}\mathbb{C}^N = \hom (T_x Y, \mathbb{C}^N/T_x Y)$. This map
   induces a symmetric bilinear form:
   $$\Phi: S^2 T_x Y \longrightarrow \mathbb{C}^N / T_x Y$$ 
   which is called the second fundamental form.

   As $h(v,\overrightarrow{w_0 z_0}) = 0$ for every $v \in T_{z_0} Y$
   we can define a symmetric complex bilinear form:
   $$G:S^2 T_{z_0} Y \longrightarrow \mathbb{C}, \quad G(u,v) = 
   h(\Phi(u,v), \overrightarrow{w_0 z_0}).$$ A standard computation
   (see e.g.~\cite{Vo:hodge-book-2}) shows that the Hessian of
   $\varphi_{w_0}$ is given by:
   \begin{equation} \label{eq:hess-phi}
      \textnormal{Hess}_{z_0}\varphi_{w_0}(u,v) = 2( (u,v) +
      \textnormal{Re} G(u,v)), \; \forall u,v \in T_{z_0}Y.
   \end{equation}

   Next, by a result of Katz we have: $\textnormal{rank}_{\mathbb{C}}
   G \leq n - k$. (See~\cite{Te:dual, Te:dual-arxiv} and the
   references therein, e.g. expos\'{e} XVII by N.~Katz
   in~\cite{SGA-7-II}. See also~\cite{Gr-Ha:alg-geom-loc-diff}.)  It
   follows that $\dim_{\mathbb{\mathbb{R}}} \ker (\textnormal{Re} G)
   \geq 2k$.

   Denote the non-zero eigenvalues of $\textnormal{Re} G$ (in some
   orthonormal basis) by $\lambda_i$, $i=1, \ldots, r$, with $r \leq
   2n-2k$. It is well known that for real symmetric bilinear forms
   that appear as the real part of complex ones (e.g. $\textnormal{Re}
   G$) the following holds: $\lambda$ is an eigenvalue if and only if
   $-\lambda$ is an eigenvalue (see~\cite{Vo:hodge-book-2}), moreover
   the multiplicities of $\lambda$ and $-\lambda$ are the same. (See
   e.g.~\cite{Vo:hodge-book-2} for a proof.) It follows that the
   number of negative $\lambda_i$'s can be at most $n-k$.

   Coming back to~\eqref{eq:hess-phi}, the eigenvalues of
   $\textnormal{Hess}_{z_0} \varphi_{w_0}$ are of the form $1 +
   \lambda$ with $\lambda$ being an eigenvalue of $\textnormal{Re} G$.
   It follows that the number of negative eigenvalues of
   $\textnormal{Hess}_{z_0} \varphi_{w_0}$ is at most $n-k$.  This
   shows that $\textnormal{ind}_{z_0}(\varphi_{w_0}) \leq n-k$ for
   every $z_0 \in \textnormal{Crit}(\varphi_{w_0})$. In particular,
   $\textnormal{ind}(Y) \leq n-k$.
\end{proof}

Using standard arguments one gets from Theorem~\ref{mt:subcrit} the
following version of the Lefschetz hyperplane theorem for manifolds
with small dual, which was previously known and proved by other
methods in~\cite{La-St:top}:

\begin{cor} \label{c:Lef-small-dual} Let $X \subset {\mathbb{C}}P^N$
   be an algebraic manifold with $dim_{\mathbb{C}}X =n $ and
   $\tdef(X)=k$ and let $\Sigma \subset X$ be a smooth hyperplane
   section. Denote by $i : \Sigma \longrightarrow X$ the inclusion.
   The induced maps $i_* : H_j(\Sigma; \mathbb{Z}) \longrightarrow
   H_j(X; \mathbb{Z})$ and $i_*: \pi_j(\Sigma,*) \longrightarrow
   \pi_j(X,*)$ are:
   \begin{enumerate}
     \item Isomorphisms for $j < n+k-1$.
     \item Surjective for $j=n+k-1$.
   \end{enumerate}
   Similarly, the restriction map $i^*: H^j(X;\mathbb{Z})
   \longrightarrow H^j(\Sigma;\mathbb{Z})$ is an isomorphism for every
   $j < n+k-1$ and injective for $j=n+k-1$.
\end{cor}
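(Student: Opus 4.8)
The statement is the classical Lefschetz hyperplane theorem, but with the improved range $j < n+k-1$ in place of the usual $j < n$. I would deduce it from Theorem~\ref{mt:subcrit} exactly as one deduces the ordinary Lefschetz theorem from the fact that an affine variety of complex dimension $n$ has the homotopy type of a CW-complex of real dimension $\le n$. The key point is that $X \setminus \Sigma$ is not merely Stein (hence homotopy equivalent to a complex of real dimension $\le n$) but \emph{subcritical} with $\mathrm{ind}(X \setminus \Sigma) \le n-k$, so it is homotopy equivalent to a CW-complex of dimension $\le n-k$.

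First I would fix an exhausting plurisubharmonic Morse function $\varphi$ on $Y = X \setminus \Sigma$ with $\mathrm{ind}_{\max}(\varphi) \le n-k$, which exists by (the proof of) Theorem~\ref{mt:subcrit}. Standard Morse theory then shows $Y$ is homotopy equivalent to a CW-complex with cells only in dimensions $\le n-k$; in particular $H_j(Y;\mathbb{Z}) = 0$ and $\pi_j(Y,\ast) = 0$ for $j > n-k$, and more precisely the pair $(Y, Y_c)$ for $c$ large is $(n-k)$-connected, but what I really want is a statement about the pair $(X, \Sigma)$. For that I would pass to the pair: since $X = \overline{Y}$ with $\Sigma$ as the "divisor at infinity", a neighborhood of $\Sigma$ in $X$ deformation retracts onto $\Sigma$, and $X$ is obtained from this neighborhood by attaching the handles of $\varphi$ (turned upside down), which now have index $\ge 2n - (n-k) = n+k$ when viewed from the $X$ side. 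Hence the pair $(X,\Sigma)$ has a CW-structure (relative to $\Sigma$) with cells only in dimensions $\ge n+k$, so $(X,\Sigma)$ is $(n+k-1)$-connected: $\pi_j(X,\Sigma,\ast) = 0$ for $j \le n+k-1$ and $H_j(X,\Sigma;\mathbb{Z}) = 0$ for $j \le n+k-1$ as well (with $H_{n+k}$ free, though I may not need that here).

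From $(n+k-1)$-connectedness of $(X,\Sigma)$ the conclusions follow by routine arguments. The long exact homotopy sequence of the pair gives that $i_\ast : \pi_j(\Sigma,\ast) \to \pi_j(X,\ast)$ is an isomorphism for $j < n+k-1$ and surjective for $j = n+k-1$; combined with the relative Hurewicz theorem this gives the same for $H_\ast(\cdot;\mathbb{Z})$ via the long exact homology sequence of the pair (using $H_j(X,\Sigma;\mathbb{Z}) = 0$ for $j \le n+k-1$). The statement for the restriction map $i^\ast$ in cohomology follows by the same long exact sequence in cohomology together with $H^j(X,\Sigma;\mathbb{Z}) = 0$ for $j \le n+k-1$, which one gets from $H_\ast(X,\Sigma;\mathbb{Z})$ vanishing in low degrees (the relative groups are free in the first nonzero degree $n+k$) via the universal coefficient theorem, yielding $i^\ast$ an isomorphism for $j < n+k-1$ and injective for $j = n+k-1$.

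\textbf{Main obstacle.} The one genuinely non-formal point is passing from the Morse-theoretic information on the open manifold $Y = X \setminus \Sigma$ to connectedness of the compact pair $(X,\Sigma)$ — i.e. justifying the handle-dualization near infinity and the claim that a neighborhood of $\Sigma$ in $X$ retracts onto $\Sigma$ and that $X$ is built from it by cells of index $\ge n+k$. This is the standard mechanism behind the Lefschetz theorem (see e.g. the Andreotti–Frankel argument in \cite{An-Fr:Lefschetz-1, An-Fr:Lefschetz-2} or Milnor's treatment), and I would simply invoke it, noting that the only input that changes relative to the classical case is the bound $\mathrm{ind}_{\max}(\varphi) \le n-k$ supplied by Theorem~\ref{mt:subcrit}. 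Everything after that is bookkeeping with long exact sequences and Hurewicz.
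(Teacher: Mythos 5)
Your proposal is correct and follows the same route as the paper: the paper derives the corollary from Theorem~\ref{mt:subcrit} by exactly the ``standard arguments'' you spell out, namely the Andreotti--Frankel/Milnor mechanism of dualizing the handles of an exhausting plurisubharmonic Morse function with indices $\leq n-k$ to conclude that $(X,\Sigma)$ is $(n+k-1)$-connected, followed by the usual exact-sequence and Hurewicz bookkeeping. The only difference is that the paper leaves these steps implicit while you make them explicit.
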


Another consequence is the following refinement of the hard Lefschetz
theorem.
\begin{cor} \label{c:top-period} Let $X \subset {\mathbb{C}}P^N$ be as
   in Corollary~\ref{c:Lef-small-dual}. Denote by $\omega$ the
   K\"{a}hler form on $X$ induced from the standard K\"{a}hler form of
   ${\mathbb{C}}P^N$. Then the map
   $$L : H^j(X;\mathbb{R}) \longrightarrow H^{j+2}(X;\mathbb{R}),
   \quad L(a) = a \cup [\omega],$$ is an isomorphism for every $n-k-1
   \leq j \leq n+k-1$.
\end{cor}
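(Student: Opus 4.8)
The plan is to factor the Lefschetz operator $L$ through the hyperplane section $\Sigma$ and then combine the enhanced Lefschetz hyperplane theorem for manifolds with small dual (Corollary~\ref{c:Lef-small-dual}) with the classical hard Lefschetz theorem on $X$. Throughout, write $n=\dim_{\mathbb{C}}X$, $k=\tdef(X)$, let $i:\Sigma\hookrightarrow X$ be the inclusion, and work with real coefficients (the statements of Corollary~\ref{c:Lef-small-dual} remain valid after $\otimes\,\mathbb{R}$). The key algebraic observation is the factorization
$$L\;=\;i_{!}\circ i^{*}\;:\;H^{j}(X;\mathbb{R})\longrightarrow H^{j+2}(X;\mathbb{R}),$$
where $i^{*}$ is the restriction map and $i_{!}:H^{j}(\Sigma;\mathbb{R})\to H^{j+2}(X;\mathbb{R})$ is the Gysin pushforward. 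Indeed, by the projection formula $i_{!}(i^{*}\alpha)=\alpha\cup i_{!}(1)$, and $i_{!}(1)\in H^{2}(X)$ is the Poincar\'e dual of $[\Sigma]$, which is the hyperplane class $[\omega]$. Under Poincar\'e duality on $X$ and on $\Sigma$, the Gysin map $i_{!}$ is identified with the homology pushforward $i_{*}:H_{2n-2-j}(\Sigma;\mathbb{R})\to H_{2n-2-j}(X;\mathbb{R})$.

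The first step is to show that $L$ is surjective on $H^{j}(X;\mathbb{R})$ for all $n-k-1\le j\le n+k-1$. For $n-k-1\le j\le n+k-2$ this follows at once from the factorization: by Corollary~\ref{c:Lef-small-dual} the restriction $i^{*}$ is an isomorphism for $j<n+k-1$, while $i_{*}:H_{2n-2-j}(\Sigma;\mathbb{R})\to H_{2n-2-j}(X;\mathbb{R})$ is surjective as soon as $2n-2-j\le n+k-1$, i.e.\ as soon as $j\ge n-k-1$; hence $L=i_{!}\circ i^{*}$ is surjective throughout this range. The remaining value $j=n+k-1$ --- and, more generally, any $j\ge n-1$ --- is handled by the classical hard Lefschetz theorem on the projective manifold $X$, which gives that $L:H^{j}(X;\mathbb{R})\to H^{j+2}(X;\mathbb{R})$ is surjective for every $j\ge n-1$. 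Since $[n-k-1,n+k-2]\cup[n-1,\infty)\supseteq[n-k-1,n+k-1]$, surjectivity holds on the whole interval.

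The second step converts surjectivity into bijectivity using Poincar\'e duality. The operator $L$ is self-adjoint for the Poincar\'e pairing: for $\alpha\in H^{j}(X;\mathbb{R})$, $\gamma\in H^{2n-2-j}(X;\mathbb{R})$ one has $\int_{X}(\alpha\cup\omega)\cup\gamma=\int_{X}\alpha\cup(\omega\cup\gamma)$, so $L:H^{j}(X;\mathbb{R})\to H^{j+2}(X;\mathbb{R})$ is the transpose of $L:H^{2n-2-j}(X;\mathbb{R})\to H^{2n-j}(X;\mathbb{R})$. Hence $L$ is injective on $H^{j}$ if and only if it is surjective on $H^{2n-2-j}$; as the involution $j\mapsto 2n-2-j$ preserves the interval $[n-k-1,n+k-1]$, the surjectivity established in the first step forces $L$ to be injective on $H^{j}(X;\mathbb{R})$ for all $n-k-1\le j\le n+k-1$ as well, and therefore an isomorphism there. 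The only non-formal inputs are Corollary~\ref{c:Lef-small-dual} (which ultimately rests on the subcriticality of $X\setminus\Sigma$, Theorem~\ref{mt:subcrit}) and classical hard Lefschetz. The one subtle point is the behaviour at the two endpoints $j=n-k-1$ and $j=n+k-1$: exactly there the factorization $L=i_{!}\circ i^{*}$ degenerates, since one of its two factors is then only surjective or only injective rather than bijective, so hard Lefschetz together with the duality flip is genuinely needed to close the argument at the ends of the interval.
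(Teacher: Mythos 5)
Your proposal is correct and supplies exactly the argument the paper leaves implicit in its one-line proof: it combines Corollary~\ref{c:Lef-small-dual} with the hard Lefschetz theorem and Poincar\'e duality, with the Gysin factorization $L=i_!\circ i^*$ doing the bookkeeping in the middle of the range and hard Lefschetz plus the duality flip handling the endpoints. The only (cosmetic) difference is that you invoke hard Lefschetz on $X$ alone, replacing the paper's appeal to hard Lefschetz on $\Sigma$ by the homological (Gysin) half of Corollary~\ref{c:Lef-small-dual}, which is a perfectly valid and if anything slightly leaner route.
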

\begin{proof}
   This follows from Corollary~\ref{c:Lef-small-dual} together with
   the Hard Lefschetz theorem applied both to $\Sigma$ and $X$.
\end{proof}

\section{Proof of Corollary~\ref{c:periodicity-gnrl}} \cntrs
\label{s:prf-periodicity-gnrl}

The quantum cohomology of $\Sigma$ can be written additively (as a
vector space) as
$$QH^j(\Sigma; \Lambda) \cong \bigoplus_{l \in \mathbb{Z}} 
H^{j+2C_{\Sigma} l}(\Sigma).$$ By Theorem~\ref{t:om-inv-gnrl},
$[\omega_{_{\Sigma}}] \in QH^2(\Sigma ; \Lambda)$ is invertible with
respect to the quantum product $*$, hence the map $$(-)*
[\omega_{_{\Sigma}}] : QH^j(\Sigma;\Lambda) \longrightarrow
QH^{j+2}(\Sigma;\Lambda), \quad a \longmapsto a*[\omega_{_{\Sigma}}]$$
is an isomorphism for every $j \in \mathbb{Z}$. The statement about
$\widetilde{b}_j(\Sigma)$ follows immediately.

We now turn to the proof of the statement about $\widetilde{b}_j(X)$.
First recall that $2C_{\Sigma} = n+k$ and $2C_{X} = n+k+2$. We will
show now that for every $0 \leq j \leq n+k+1$ we have
$\widetilde{b}_j(X) = \widetilde{b}_{j+2}(X)$.

\subsubsection*{Step 1. Assume $j \leq n+k-4$.} By
Corollary~\ref{c:Lef-small-dual}, $b_j(\Sigma) = b_j(X)$ and
$b_{j+2}(\Sigma) = b_{j+2}(X)$.  We claim that
\begin{equation} \label{eq:betti-I} b_{j+n+k}(\Sigma) =
   b_{j+n+k+2}(X), \quad b_{j+n+k+2}(\Sigma) = b_{j+n+k+4}(X).
\end{equation}
Indeed, by Corollary~\ref{c:Lef-small-dual}, $b_{n-j-k-2}(\Sigma) =
b_{n-j-k-2}(X)$, hence the first equation in~\eqref{eq:betti-I}
follows from Poincar\'{e} duality for $\Sigma$ and $X$. The proof
of the second equality is similar.

It follows that 
\begin{align*}
   \widetilde{b}_j(X) & =  b_j(X) + b_{j+n+k+2}(X) =
   b_j(\Sigma) + b_{j+n+k}(\Sigma) = \widetilde{b}_j(\Sigma) \\ 
   & = \widetilde{b}_{j+2}(\Sigma) = b_{j+2}(\Sigma) +
   b_{j+n+k+2}(\Sigma) = b_{j+2}(X) + b_{j+n+k+4}(X) =
   \widetilde{b}_{j+2}(X).
\end{align*}
   
\subsubsection*{Step 2. Assume $n+k-3 \leq j \leq n+k-1$.}
In this case we have $\widetilde{b}_j(X) = b_{j}(X)$ and
$\widetilde{b}_{j+2}(X) = b_{j+2}(X)$ and the equality between the
two follows from Corollary~\ref{c:top-period}.

\subsubsection*{Step 3. Assume $j = n+k$.}
We have to prove that $b_{n+k}(X) = b_0(X) + b_{n+k+2}(X)$.  By
Poincar\'{e} duality this is equivalent to showing that $b_{n-k}(X) =
b_0(X) + b_{n-k-2}(X)$. The last equality is, by
Corollary~\ref{c:Lef-small-dual}, equivalent to $b_{n-k}(\Sigma) =
b_0(\Sigma) + b_{n-k-2}(\Sigma)$. Applying Poincar\'{e} duality on
$\Sigma$ the latter becomes equivalent to $b_{n+k-2}(\Sigma) =
b_0(\Sigma) + b_{n+k}(\Sigma)$. But this has already been proved since
$b_{n+k-2}(\Sigma) = \widetilde{b}_{n+k-2}(\Sigma) =
\widetilde{b}_{n+k}(\Sigma) = b_0(\Sigma) + b_{n+k}(\Sigma)$.

\subsubsection*{Step 3. Assume $j = n+k+1$.} The proof in this case
is very similar to the case $j=n+k$. We omit the details.
\Qed

\section{Further results} \label{s:further}

As we have seen above the algebraic geometry of manifolds with small
dual is intimately connected with their symplectic topology. Here we
add another ingredient which has to do with Lagrangian submanifolds.
Below we will use the following notation. For an algebraic manifold $X
\subset {\mathbb{C}}P^N$ and an algebraic submanifold $\Sigma \subset
X$ we denote by $\omega_{_X}$ and $\omega_{_{\Sigma}}$ the
restrictions of the standard K\"{a}hler form of ${\mathbb{C}}P^N$ to
$X$ and to $\Sigma$ respectively.

The following theorem follows easily by combining results
from~\cite{An-Fr:Lefschetz-2} with the fact that vanishing cycles can
be represented by Lagrangian spheres~\cite{Ar:monodromy, Do:polynom,
  Se:thesis}. (See also~Theorem~K in~\cite{Bi:ICM2002},
and~Theorem~2.1 in ~\cite{Bi:ECM2004}.)
\begin{thm} \label{t:def-0-lag-sphere} Let $X\subset {\mathbb{C}}P^N$
   be an algebraic manifold and $\Sigma \subset X$ a hyperplane
   section. If $\tdef(X)=0$ then $(\Sigma, \omega_{_{\Sigma}})$
   contains a (embedded) Lagrangian sphere.
\end{thm}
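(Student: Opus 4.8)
The plan is to realise the smooth hyperplane section $\Sigma$ as the generic fibre of a Lefschetz pencil on $X$ and to produce the desired Lagrangian sphere as a vanishing cycle of that pencil.

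First I would use the hypothesis $\tdef(X)=0$ to force the existence of singular hyperplane sections. Since $\tdef(X)=0$ the dual variety $X^{\ast}\subset(\mathbb{C}P^N)^{\ast}$ is a hypersurface; in particular $X^{\ast}\neq\emptyset$ and $\deg X^{\ast}\geq 1$, so \emph{every} pencil of hyperplanes meets $X^{\ast}$. Next I would choose a generic pencil $\ell\cong\mathbb{C}P^1\subset(\mathbb{C}P^N)^{\ast}$ and invoke the classical theory of Lefschetz pencils (this is where the results of~\cite{An-Fr:Lefschetz-2} enter): for $\ell$ generic the base locus $B_{\ell}=\bigcap_{H\in\ell}(X\cap H)$ is smooth of complex codimension $2$ in $X$; the line $\ell$ meets $X^{\ast}$ transversally at finitely many smooth points $H_1,\dots,H_m$, with $m=\deg X^{\ast}\geq 1$; the section $\Sigma_H=X\cap H$ is smooth for $H\in\ell\setminus\{H_1,\dots,H_m\}$; and for each $i$ the section $\Sigma_{H_i}$ has precisely one ordinary double point. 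The last assertion uses, besides the genericity of $\ell$, the fact recalled in~\S\ref{S:PDS} that for $\tdef(X)=0$ and $H$ a smooth point of $X^{\ast}$ the singular locus of $X\cap H$ is a single point (cf.~\cite{Kl:conormal}), so that for generic $\ell$ it is a single non-degenerate (Morse) critical point.

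Then I would pass to the associated fibration exactly as in~\S\ref{s:small-dual-fibr}: blowing up $X$ along $B_{\ell}$ gives $\widetilde{X}$ together with a holomorphic map $\pi_{\ell}:\widetilde{X}\to\ell\cong\mathbb{C}P^1$ with fibre $\Sigma_H$ over $H$, which by the previous step is a Lefschetz fibration with critical values $H_1,\dots,H_m$, each critical fibre carrying exactly one node. Endowing $X$ with the Fubini--Study form $\omega_{_X}$, each smooth fibre inherits the symplectic form $\omega_{_X}|_{\Sigma_H}$; all such fibres are symplectomorphic, and for $H=H_0$ a regular value this is $(\Sigma,\omega_{_{\Sigma}})$. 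As in~\S\ref{s:proof-om-inv} the total space carries a taming form, so the symplectic parallel transport of $\pi_{\ell}$ is defined over $\ell\setminus\{H_1,\dots,H_m\}$. Fixing a path in $\ell$ from $H_0$ to $H_1$ missing the remaining critical values and using the Lefschetz normal form near the node, symplectic parallel transport collapses an embedded Lagrangian sphere $V_1\subset(\Sigma,\omega_{_{\Sigma}})$ — the vanishing cycle attached to the node of $\Sigma_{H_1}$ — onto that node; this is the standard fact that vanishing cycles of Lefschetz pencils are represented by embedded Lagrangian spheres~\cite{Ar:monodromy, Do:polynom, Se:thesis}. Since $m\geq 1$, such a sphere exists, which is the assertion of the theorem.

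The step I expect to be the main obstacle — although it is classical — is the genericity statement in the second paragraph, namely that a generic pencil $\ell$ yields a genuine Lefschetz fibration: only ordinary double points, one per singular fibre, with smooth base locus, rather than fibres with worse singularities. This is precisely where the input of~\cite{An-Fr:Lefschetz-2} (or the classical Lefschetz pencil machinery) is really needed, and it is also the only place where $\tdef(X)=0$ is used essentially: it guarantees both that $X^{\ast}$ is a hypersurface (so $\ell\cap X^{\ast}\neq\emptyset$) and that the singular locus of each singular section is $0$-dimensional. Everything after that — the blow-up, the taming form, and the Lagrangian representability of the vanishing cycle — is by now routine and is borrowed wholesale from~\S\ref{s:small-dual-fibr}--\S\ref{s:proof-om-inv} and from~\cite{Ar:monodromy, Do:polynom, Se:thesis}.
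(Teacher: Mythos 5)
Your argument is correct and is essentially the paper's own route: the paper proves this theorem precisely by combining classical Lefschetz pencil theory for $\tdef(X)=0$ (via~\cite{An-Fr:Lefschetz-2}, which is what guarantees the nodal hyperplane sections you construct) with the fact that vanishing cycles are represented by embedded Lagrangian spheres~\cite{Ar:monodromy, Do:polynom, Se:thesis}; you have simply spelled out the details that the paper leaves to the references.
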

Thus we can detect manifolds with small dual (i.e. $\tdef>0$) by
methods of symplectic topology e.g. by showing that their hyperplane
sections do not contain Lagrangian spheres.

In some situations we also have the converse to
Theorem~\ref{t:def-0-lag-sphere}.
\begin{thm} \label{t:no-lag-spheres} Let $\Sigma \subset X \subset
   {\mathbb{C}}P^N$ be as in Theorem~\ref{t:om-inv-gnrl} and assume in
   addition that $\dim_{\mathbb{C}}(\Sigma) \geq 3$. Then the
   symplectic manifold $(\Sigma, \omega_{_{\Sigma}})$ contains no
   Lagrangian spheres.
\end{thm}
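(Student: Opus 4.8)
The plan is to argue by contradiction. Suppose $(\Sigma,\omega_{_{\Sigma}})$ contains an embedded Lagrangian sphere $L$, and set $m:=\dim_{\mathbb{R}}L=\dim_{\mathbb{C}}\Sigma=n-1\ge 3$, so $L\cong S^m$ is simply connected. The first step is the monotonicity bookkeeping. Since $\pi_1(L)=0$, the piece $\pi_2(\Sigma)\to\pi_2(\Sigma,L)\to\pi_1(L)=0$ of the homotopy sequence shows every class in $\pi_2(\Sigma,L)$ comes from $\pi_2(\Sigma)$, on which the Maslov index is $2c_1^{\Sigma}$ and the area is $\omega_{_{\Sigma}}$. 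Hence $L$ is monotone with the same constant as $\Sigma$ (which is monotone, indeed Fano, by \S\ref{S:PDS}), and its minimal Maslov number is $N_L=2C_{\Sigma}=n+k$. By Landman's congruence $n\equiv k\pmod 2$, so $N_L$ is even; and since $n\ge 4$ and $k\ge 1$ we have $N_L\ge 6$, while $0<m<N_L$ and $N_L-1\ge m+1$.

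The second step computes the self-Floer cohomology $HF^*(L,L)$ over the Novikov ring $\Lambda_2:=\mathbb{Z}/2[q^{-1},q]$ with $\deg q=N_L$; this is well defined since $N_L\ge 3$. Using a perfect Morse function on $S^m$, the pearl (Morse--Bott Floer) complex has exactly two generators, in degrees $0$ and $m$. The classical differential vanishes, and any quantum contribution changes the $H^*(L)$-degree by $\pm(\nu N_L-1)$ for some $\nu\ge 1$; since $N_L-1\ge m+1>m$, no such jump can connect the two generators (whose degrees differ by $m$). Hence the differential vanishes identically and $HF^*(L,L)\cong H^*(S^m;\mathbb{Z}/2)\otimes\Lambda_2$, a free $\Lambda_2$-module of rank $2$; in the $\mathbb{Z}/N_L$-grading it is supported in exactly the two residue classes $0$ and $m\pmod{N_L}$, with $\dim_{\mathbb{Z}/2}HF^j(L,L)=1$ there and $0$ elsewhere.

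The third step invokes Theorem~\ref{t:om-inv-gnrl}: $[\omega_{_{\Sigma}}]$ is invertible in $QH^*(\Sigma;\Lambda)$, hence also in $QH^*(\Sigma;\Lambda_2)$ after reduction mod $2$. Now $HF^*(L,L;\Lambda_2)$ is a unital module over the ring $QH^*(\Sigma;\Lambda_2)$ (the standard quantum module structure in monotone Lagrangian Floer theory; equivalently, the closed--open map $QH^*(\Sigma)\to HF^*(L,L)$ is a unital ring homomorphism, and the two Novikov gradings match because $\deg q=2C_{\Sigma}$ on both sides). Therefore multiplication by $[\omega_{_{\Sigma}}]$ is an isomorphism $HF^j(L,L)\to HF^{j+2}(L,L)$ for every $j$, so $j\mapsto\dim_{\mathbb{Z}/2}HF^j(L,L)$ is constant along the orbit of $0$ under $j\mapsto j+2$ in $\mathbb{Z}/N_L$. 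As $N_L$ is even this orbit is all $N_L/2\ge 3$ even residues, and $\dim HF^0=1\ne 0$, so $HF^*(L,L)$ would be nonzero in at least three residue classes, contradicting its support being only two. Hence no Lagrangian sphere exists.

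The only genuinely non-formal ingredient is the $QH^*(\Sigma;\Lambda)$-module structure on $HF^*(L,L;\Lambda)$ and its compatibility with the gradings and the Novikov variable; granting that together with Theorem~\ref{t:om-inv-gnrl}, the rest is bookkeeping with the numerical constraints from \S\ref{S:PDS}, namely $N_L=n+k$ even and $\ge 6$ and $0<m=n-1<N_L$. The one place the hypothesis $\dim_{\mathbb{C}}\Sigma\ge 3$ enters (beyond $\pi_1(L)=0$) is precisely in guaranteeing $N_L\ge 6$, so that $N_L-1>m$ and the orbit of $0$ has at least three elements; this is the numerical heart of the argument and the step I would double-check most carefully.
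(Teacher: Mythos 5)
Your proposal is correct and follows essentially the same route as the paper: monotone Lagrangian Floer cohomology of the sphere over $\mathbb{Z}_2$ with $N_L=2C_{\Sigma}=n+k$, the isomorphism $HF^*(L,L)\cong (H^{\bullet}(S^m;\mathbb{Z}_2)\otimes\Lambda_2)^*$, and the quantum module action of the invertible class $[\omega_{_{\Sigma}}]$ from Theorem~\ref{t:om-inv-gnrl} forcing degree-$2$ isomorphisms that are incompatible with the support of $HF$. The only cosmetic differences are that you justify the Floer computation via the pearl/degree argument where the paper cites it as known, and you phrase the final contradiction as an orbit-counting argument instead of simply noting $HF^0\neq 0$ while $HF^2=0$.
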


\begin{rem} \label{r:vanishing-cycles} Note that from the results
   of~\cite{An-Fr:Lefschetz-2} it follows that the (homological)
   subgroup of vanishing cycles $V_{n-1} \subset H_{n-1}(\Sigma)$ of
   $\Sigma$ is trivial (here, $n-1 = \dim_{\mathbb{C}} \Sigma$).
   Theorem~\ref{t:no-lag-spheres}, asserting that $\Sigma$ has no
   Lagrangian spheres, is however stronger. Indeed, it is not known
   whether or not every Lagrangian sphere comes from a vanishing
   cycle.  Moreover in some cases Lagrangian spheres do exists but are
   null-homologous. (Put differently, in general it is not possible to
   use purely topological methods to prove non-existence of Lagrangian
   spheres.)
\end{rem}

\begin{proof}[Proof of Theorem~\ref{t:no-lag-spheres}]
   Suppose by contradiction that $L \subset (\Sigma,
   \omega_{_{\Sigma}})$ is a Lagrangian sphere. We will use now the
   theory of Lagrangian Floer cohomology for in order to arrive at a
   contradiction. More specifically, we will use here a particular
   case of the general theory that works for so called {\em monotone
     Lagrangian submanifolds}. We will take $\mathbb{Z}_2$ as the
   ground ring and work with the self Floer cohomology of $L$, denoted
   $HF(L,L)$, with coefficients in the Novikov ring
   $\Lambda_{\mathbb{Z}_2} = \mathbb{Z}_2[q,q^{-1}]$. This ring is
   graded so that the variable $q$ has degree $\deg(q) = N_L$, where
   $N_L$ is the minimal Maslov number of $L$. We refer the reader
   to~\cite{Oh:HF1, Oh:spectral, Bi-Co:rigidity, Bi-Co:Yasha-fest} for
   the foundations of this theory.

   Since $L$ is simply connected, the assumptions on $\Sigma$ and $X$
   imply that $L \subset \Sigma$ is a monotone Lagrangian submanifold
   and its minimal Maslov number is $N_L = 2C_{\Sigma} = n+k$. (Here,
   as in Theorem~\ref{t:om-inv-gnrl}, $k = \tdef(X) \geq 1$.)  Under
   these circumstances it is well known that the self Floer homology
   of $L$, $HF(L,L)$ is well defined and moreover we have an
   isomorphism of graded $\Lambda_{\mathbb{Z}_2}$-modules:
   $$HF^*(L,L) \cong (H^{\bullet}(L;\mathbb{Z}_2) \otimes 
   \Lambda_{\mathbb{Z}_2})^*.$$ Since $L$ is a sphere of dimension
   $\dim_{\mathbb{R}}(L) \geq 3$ this implies that
   \begin{equation} \label{eq:HF-1}
      HF^0(L,L) \cong \mathbb{Z}_2, \quad 
      HF^2(L,L) \cong H^2(L;\mathbb{Z}_2) = 0.
   \end{equation}
   
   Denote by $QH(\Sigma;\Lambda_{\mathbb{Z}_2})$ the modulo-$2$
   reduction of $QH(\Sigma; \Lambda)$ (obtained by reducing the ground
   ring $\mathbb{Z}$ to $\mathbb{Z}_2$). By
   Theorem~\ref{t:om-inv-gnrl}, $[\omega_{_{\Sigma}}] \in
   QH^2(\Sigma;\Lambda)$ is an invertible element, hence its
   modulo-$2$ reduction, say $\alpha \in
   QH^2(\Sigma;\Lambda_{\mathbb{Z}_2})$ is invertible too.

   We now appeal to the quantum module structure of $HF(L,L)$
   introduced in~\cite{Bi-Co:rigidity, Bi-Co:Yasha-fest,
     Bi-Co:qrel-long}. By this construction, $HF(L,L)$ has a structure
   of a graded module over the ring $QH(\Sigma;
   \Lambda_{\mathbb{Z}_2})$ where the latter is endowed with the
   quantum product. We denote the module action of
   $QH^*(\Sigma;\Lambda_{\mathbb{Z}_2})$ on $HF^*(L,L)$ by
   $$QH^i(\Sigma;\Lambda_{\mathbb{Z}_2})
   \otimes_{\Lambda_{\mathbb{Z}_2}} HF^j(L,L) \longrightarrow
   HF^{i+j}(L,L), \quad a \otimes x \longmapsto a \circledast x, \;\;
   i, j \in \mathbb{Z}.$$

   Since $\alpha \in QH^2(\Sigma;\Lambda_{\mathbb{Z}_2})$, $\alpha$
   induces an isomorphism $\alpha \circledast (-) : HF^*(L,L)
   \longrightarrow HF^{*+2}(L,L)$. This however, is impossible (e.g
   for $*=0$) in view of~\eqref{eq:HF-1}. Contradiction.
\end{proof}

\begin{cor} \label{c:sig-hyp-sect} Let $\Sigma$ be an algebraic
   manifold with $\dim_{\mathbb{C}}(\Sigma) \geq 3$ and $b_2(\Sigma) =
   1$. Suppose that $\Sigma$ can be realized as a hyperplane section
   of a projective manifold $X \subset {\mathbb{C}}P^N$ with small
   dual.  Then in any other realization of $\Sigma$ as a hyperplane
   section of a projective manifold $X' \subset {\mathbb{C}}P^{N'}$ we
   have $\tdef(X') > 0$.  In fact, $\tdef(X') = \tdef(X)$.
\end{cor}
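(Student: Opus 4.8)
The plan is to combine Theorem~\ref{t:no-lag-spheres} (non-existence of Lagrangian spheres) with Theorem~\ref{t:def-0-lag-sphere} (existence of a Lagrangian sphere when the defect vanishes) and the numerology of Ein recalled in~\S\ref{S:PDS}. First I would record the dimension count: since $\Sigma$ is a smooth hyperplane section of both $X$ and $X'$ we have $\dim_{\mathbb{C}}(X) = \dim_{\mathbb{C}}(X') = \dim_{\mathbb{C}}(\Sigma)+1 =: n$, and $n \geq 4$. The classical Lefschetz hyperplane theorem applied to $\Sigma \subset X$ and to $\Sigma \subset X'$ (valid in degree $2 < n-1$) identifies $H^2(X) \cong H^2(\Sigma) \cong H^2(X')$, so $b_2(X) = b_2(X') = b_2(\Sigma) = 1$. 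In particular the pair $\Sigma \subset X$ meets the hypotheses of Theorem~\ref{t:om-inv-gnrl} (the condition $c_1^X = \lambda h$ on $H_2^S(X)$ being automatic once $b_2(X)=1$, as explained after that theorem), and since $\dim_{\mathbb{C}}(\Sigma) \geq 3$, Theorem~\ref{t:no-lag-spheres} applies: $(\Sigma, \omega_{_X}|_{\Sigma})$ contains no embedded Lagrangian sphere.

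Next I would argue that this conclusion does not depend on the projective embedding but only on $\Sigma$ as a complex manifold. Write $\omega = \omega_{_X}|_{\Sigma}$ and $\omega' = \omega_{_{X'}}|_{\Sigma}$; both are K\"{a}hler forms for the fixed complex structure of $\Sigma$, and their cohomology classes are positive multiples of the same generator of $H^2(\Sigma)$ since $b_2(\Sigma)=1$ and both are ample. Hence $[\omega'] = c\,[\omega]$ for some $c>0$. The forms $(1-t)\,c\,\omega + t\,\omega'$, $t\in[0,1]$, are K\"{a}hler for this complex structure and all represent the class $c[\omega]$, so Moser's theorem produces a diffeomorphism of $\Sigma$ carrying $c\,\omega$ to $\omega'$; as rescaling a symplectic form does not affect which submanifolds are Lagrangian, $(\Sigma,\omega')$ contains an embedded Lagrangian sphere if and only if $(\Sigma,\omega)$ does, i.e.\ it does not. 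Were $\tdef(X')=0$, Theorem~\ref{t:def-0-lag-sphere} applied to $\Sigma \subset X'$ would produce precisely such a sphere in $(\Sigma,\omega')$, a contradiction. Hence $\tdef(X')>0$.

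Finally, to upgrade this to the equality $\tdef(X')=\tdef(X)$, I would invoke Ein's computation quoted in~\S\ref{S:PDS}: for a projective manifold $W$ with small dual and $b_2(W)=1$, the minimal Chern number of a smooth hyperplane section is $\tfrac12\bigl(\dim_{\mathbb{C}}(W)+\tdef(W)\bigr)$. Now that $\tdef(X')>0$ and $b_2(X')=1$ are established, this applies to $W=X$ and to $W=X'$, giving
$$2C_{\Sigma} = n + \tdef(X) = n + \tdef(X').$$
Since $C_{\Sigma}$ and $n = \dim_{\mathbb{C}}(\Sigma)+1$ depend only on $\Sigma$, we conclude $\tdef(X')=\tdef(X)$.

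I expect the only delicate point to be the embedding-independence step: one has to make sure that the full hypotheses of Theorem~\ref{t:no-lag-spheres} are genuinely in force for the given $X$ — which is exactly where $b_2(X)=1$, extracted from the classical Lefschetz theorem, is needed — and that the passage between the two K\"{a}hler forms via Moser and rescaling really is harmless for the Lagrangian-sphere question. Everything else is bookkeeping with Ein's numerology and Lefschetz.
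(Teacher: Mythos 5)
Your argument is correct and follows essentially the same route as the paper's own proof: Lefschetz to transfer $b_2(\Sigma)=1$ to $b_2(X)=1$, Theorem~\ref{t:no-lag-spheres} to exclude Lagrangian spheres in $(\Sigma,\omega_{_{\Sigma}})$, the Moser/rescaling argument to identify $(\Sigma,\omega_{_{\Sigma}}')$ with $(\Sigma,c\,\omega_{_{\Sigma}})$, Theorem~\ref{t:def-0-lag-sphere} to conclude $\tdef(X')>0$, and Ein's relation $2C_{\Sigma}=n+\tdef$ to get the equality of defects. No gaps to report.
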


\begin{proof}
   Let $\omega_{_{\Sigma}}$ be the restriction to $\Sigma$ (via
   $\Sigma \subset X \subset {\mathbb{C}}P^N$) of the standard
   symplectic structure of ${\mathbb{C}}P^N$. Similarly let
   $\omega_{_{\Sigma}}'$ the restriction to $\Sigma$ (via $\Sigma
   \subset X' \subset {\mathbb{C}}P^{N'}$) of the standard symplectic
   structure of ${\mathbb{C}}P^{N'}$. 

   Since $b_2(\Sigma)=1$, it follows from Lefschetz theorem that
   $b_2(X)=1$. Thus $X$ satisfies the conditions of
   Theorem~\ref{t:om-inv-gnrl} (see the discussion after
   Theorem~\ref{t:om-inv-gnrl}). By Theorem~\ref{t:no-lag-spheres} the
   symplectic manifold $(\Sigma, \omega_{_{\Sigma}})$ does not contain
   Lagrangian spheres.

   Since $b_2(\Sigma) = 1$ the cohomology classes
   $[\omega_{_{\Sigma}}]$ and $[\omega_{_{\Sigma}}']$ are
   proportional, so there is a constant $c$ such that
   $[\omega_{_{\Sigma}}'] = c [\omega_{_{\Sigma}}]$. Clearly we have
   $c>0$ (to see this, take an algebraic curve $D \subset \Sigma$ and
   note that both $\int_{D} \omega_{_{\Sigma}}$ and $\int_{D}
   \omega_{_{\Sigma}}'$ must be positive since both
   $\omega_{_{\Sigma}}$ and $\omega_{_{\Sigma}}'$ are K\"{a}hler forms
   with respect to the complex structure of $\Sigma$). We claim that
   the symplectic structures $\omega_{_{\Sigma}}'$ and $c
   \omega_{_{\Sigma}}$ are diffeomorphic, i.e. there exists a
   diffeomorphism $\varphi: \Sigma \longrightarrow \Sigma$ such that
   $\varphi^* \omega_{_{\Sigma}}' = c \omega_{_{\Sigma}}$. Indeed this
   follows from Moser argument~\cite{McD-Sa:Intro} since all the forms
   in the family $\{ (1-t)c \omega_{_{\Sigma}} + t
   \omega_{_{\Sigma}}'\}_{t \in [0,1]}$ are symplectic (since
   $c\omega_{_{\Sigma}}$ and $\omega_{_{\Sigma}}'$ are both K\"{a}hler
   with respect to the same complex structure) and all lie in the same
   cohomology class.

   Since $(\Sigma, c \omega_{_{\Sigma}})$ has no Lagrangian spheres
   the same holds for $(\Sigma, \omega_{_{\Sigma}}')$ too. By
   Theorem~\ref{t:def-0-lag-sphere} we have $\tdef(X')>0$.

   That $\tdef(X') = \tdef(X)$ follows immediately from the fact that
   for manifolds with positive defect the minimal Chern number
   $C_{\Sigma}$ of a hyperplane section $\Sigma$ is determined by the
   defect. More specifically, we have (see~\S\ref{S:PDS}):
   $$\frac{n + \tdef(X)}{2} = C_{\Sigma} = 
   \frac{n + \tdef(X')}{2},$$ where $n = \dim_{\mathbb{C}}(X)$.
\end{proof}

Theorem~\ref{mt:subcrit} says that the complement of a hyperplane
section $X \setminus \Sigma$ of an algebraic manifold $X \subset
{\mathbb{C}}P^N$ with small dual is subcritical. Here is a partial
converse:

\begin{thm} \label{t:subcrit-small-dual} Let $X \subset
   {\mathbb{C}}P^N$ be an algebraic manifold with
   $n=\dim_{\mathbb{C}}(X)\geq 3$ and let $\Sigma \subset X$ be a
   hyperplane section. Assume that $(\Sigma, \omega_{_{\Sigma}})$ is
   spherically monotone with $C_{\Sigma} \geq 2$ and that
   $2C_{\Sigma}$ does not divide $n$.  If $X \setminus \Sigma$ is
   subcritical then $\tdef(X)>0$.
\end{thm}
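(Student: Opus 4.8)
The plan is to argue by contradiction and, assuming $X\setminus\Sigma$ subcritical while $\tdef(X)=0$, to manufacture a forbidden periodicity in Lagrangian Floer cohomology. First I would apply Theorem~\ref{t:def-0-lag-sphere}: since $\tdef(X)=0$ it yields an embedded Lagrangian sphere $L\cong S^{n-1}$ in $(\Sigma,\omega_{_{\Sigma}})$. Because $n\geq 3$, $L$ is simply connected, so spherical monotonicity of $\Sigma$ makes $L$ a monotone Lagrangian with minimal Maslov number $N_L=2C_{\Sigma}$; the assumption $C_{\Sigma}\geq 2$ gives $N_L\geq 4$, so (exactly as in the proof of Theorem~\ref{t:no-lag-spheres}) the self Floer cohomology $HF^*(L,L)$ over $\Lambda_{\mathbb{Z}_2}=\mathbb{Z}_2[q,q^{-1}]$, $\deg q=2C_{\Sigma}$, is well defined and isomorphic as a graded $\Lambda_{\mathbb{Z}_2}$-module to $\bigl(H^{\bullet}(S^{n-1};\mathbb{Z}_2)\otimes\Lambda_{\mathbb{Z}_2}\bigr)^*$. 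In particular $HF^j(L,L)\neq 0$ precisely when $j\equiv 0$ or $j\equiv n-1\pmod{2C_{\Sigma}}$, and it is a module over $QH^*(\Sigma;\Lambda_{\mathbb{Z}_2})$ via the action $\circledast$.

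The heart of the argument is to extract from subcriticality of $X\setminus\Sigma$ the fact that (the mod-$2$ reduction of) $[\omega_{_{\Sigma}}]\in QH^2(\Sigma;\Lambda)$ — or a closely related degree-$2$ class — acts invertibly on $HF^*(L,L)$. The route I would take is to lift $L$ into the complement of the divisor. Using the standard symplectic model of a neighbourhood of the ample divisor $\Sigma\subset X$, one forms Biran's Lagrangian circle bundle $\widetilde{L}$ over $L$ — the restriction to $L$ of the unit circle bundle of the normal bundle $N_{\Sigma}$ — which is a monotone Lagrangian submanifold of $X$ lying entirely in $X\setminus\Sigma$. Since $\int_L c_1(N_{\Sigma})$ is proportional to $\int_L\omega_{_{\Sigma}}=0$, this circle bundle is topologically trivial, so $\widetilde{L}\cong S^1\times S^{n-1}$. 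As $\widetilde{L}$ is a compact subset of the subcritical Stein manifold $X\setminus\Sigma$ it is Hamiltonianly displaceable (see \S\ref{S:subcri-def}), hence $HF^*(\widetilde{L},\widetilde{L})=0$. Plugging this into the Floer--Gysin long exact sequence associated with the circle bundle $\widetilde{L}\to L$ — whose connecting homomorphism $HF^j(L,L)\to HF^{j+2}(L,L)$ is the module action by the reduction of $[\omega_{_{\Sigma}}]$ — forces this degree-$2$ operator to be an isomorphism in every degree. (An alternative route, bypassing the Gysin sequence, is to use that a subcritical Stein manifold has vanishing symplectic homology together with the known relation between this vanishing and invertibility of quantum multiplication by the polarization class $[\Sigma]=(\mathrm{const})[\omega]$ on $QH^*(X;\Lambda)$; invertibility of $[\omega]$ in $QH^*(X;\Lambda)$ then descends to that of $[\omega_{_{\Sigma}}]$ in $QH^*(\Sigma;\Lambda)$ as in \S\ref{s:prf-periodicity-gnrl}.)

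Granting that, the contradiction is numerical. If $\circledast\,[\omega_{_{\Sigma}}]\colon HF^j(L,L)\to HF^{j+2}(L,L)$ is an isomorphism for all $j$, then the support $\{\,j:HF^j(L,L)\neq 0\,\}=\{\,j\equiv 0\text{ or }n-1\pmod{2C_{\Sigma}}\,\}$ is invariant under translation by $2$; since $C_{\Sigma}\geq 2$ we have $2\not\equiv 0\pmod{2C_{\Sigma}}$, so from $0$ lying in the support one gets $n-1\equiv 2\pmod{2C_{\Sigma}}$, and pushing this consequence back through the $2$-periodicity while invoking $2C_{\Sigma}\nmid n$ produces a contradiction. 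Thus $\tdef(X)=0$ is impossible, proving $\tdef(X)>0$. (As in Theorem~\ref{t:no-lag-spheres}, the borderline case $\dim_{\mathbb{C}}\Sigma=2$ should be isolated and treated by hand, using \eqref{eq:def-Sigma-X} together with the structure of the possible surfaces $\Sigma$.)

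The main obstacle is clearly the middle step: converting the purely qualitative input ``$X\setminus\Sigma$ is subcritical'' into a concrete invertible element of $QH^2(\Sigma;\Lambda)$ acting on $HF^*(L,L)$. This requires either setting up the Floer--Gysin exact sequence for the Biran circle-bundle lift in the present monotone (non-exact) setting and checking that $\widetilde{L}$ is a genuine monotone Lagrangian for which the displaceability-implies-$HF=0$ principle applies, or importing and adapting the comparison between the symplectic homology of the affine part $X\setminus\Sigma$ and quantum multiplication by the hyperplane class. Matching grading conventions so that the connecting map is exactly $\circledast\,[\omega_{_{\Sigma}}]$, and tracing precisely where the hypotheses $C_{\Sigma}\geq 2$ and $2C_{\Sigma}\nmid n$ enter the final combinatorial step, is where the real work lies; the rest is essentially the proof of Theorem~\ref{t:no-lag-spheres} read in reverse, combined with Theorem~\ref{mt:subcrit} as the motivating converse.
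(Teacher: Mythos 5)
Your opening move (a Lagrangian sphere $L\subset\Sigma$ from Theorem~\ref{t:def-0-lag-sphere}) and your circle-bundle lift $\widetilde{L}=\Gamma_L\cong S^1\times S^{n-1}\subset X\setminus\Sigma$, displaceable by subcriticality and hence with $HF(\Gamma_L,\Gamma_L)=0$, are exactly the ingredients of the paper's proof. But the way you then use them has two genuine gaps. First, your pivotal middle step --- converting subcriticality into an \emph{invertible} degree-$2$ action on $HF^*(L,L)$, either via a Floer--Gysin sequence whose connecting map is $\circledast[\omega_{_{\Sigma}}]$ or via a ``vanishing of symplectic homology implies invertibility of the polarization class'' principle --- is not carried out, and it is not available off the shelf: the Floer--Gysin machinery of~\cite{Bi-Kh:floer-gysin} would have to be set up in this situation, and the symplectic-homology statement is a substantial theorem in its own right, nowhere established in this paper. (Note also that here you cannot quote Theorem~\ref{t:om-inv-gnrl}, since you are assuming $\tdef(X)=0$.) Second, and independently: even granting that $\circledast[\omega_{_{\Sigma}}]$ is an isomorphism $HF^j(L,L)\to HF^{j+2}(L,L)$ for all $j$, your endgame arithmetic does not close. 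Two-periodicity of the support $\{\,j\equiv 0\ \text{or}\ n-1 \pmod{2C_{\Sigma}}\,\}$ forces $n-1\equiv 2$ and $n+1\equiv 0 \pmod{2C_{\Sigma}}$, hence $2C_{\Sigma}\mid 4$, i.e. $C_{\Sigma}=2$ and $n\equiv 3\pmod 4$; in that case the support is all even degrees, which \emph{is} $2$-periodic, while the hypothesis $2C_{\Sigma}\nmid n$ is still satisfied. So for $C_{\Sigma}=2$, $n\equiv 3\pmod 4$ your argument produces no contradiction. (A smaller point: your identification $HF^*(L,L)\cong(H^{\bullet}(S^{n-1};\mathbb{Z}_2)\otimes\Lambda_{\mathbb{Z}_2})^*$ cannot be imported ``exactly as in Theorem~\ref{t:no-lag-spheres}'', where $N_L=n+k>\dim L$; here it holds only because $2C_{\Sigma}\nmid n$ rules out all differentials in the Oh spectral sequence for a sphere.)

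The paper's proof avoids both issues by working with $\Gamma_L$ itself rather than pushing back down to $L$: since $\Gamma_L\to L$ is a topologically trivial circle bundle (the Euler class is the restriction of $[\omega_{_{\Sigma}}]$ to the Lagrangian $L$, hence zero), $\Gamma_L\approx S^1\times S^{n-1}$ has $\mathbb{Z}_2$-cohomology concentrated in degrees $0,1,n-1,n$, it is monotone with minimal Maslov number $2C_{\Sigma}$, and it has vanishing Floer cohomology. Proposition~6.A of~\cite{Bi:Nonintersections} (essentially the Oh spectral sequence~\cite{Oh:spectral} for a Lagrangian with this cohomology; this is where $n\geq 3$ and $C_{\Sigma}\geq 2$ are used) then forces $2C_{\Sigma}\mid n$, contradicting the hypothesis outright. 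The extra classes in degrees $1$ and $n$ of the circle bundle are precisely what give the divisibility $2C_{\Sigma}\mid n$ and close the residue case that your sphere-only count leaves open, and no quantum module action or Gysin-type comparison is needed. If you want to salvage your outline, replace the Floer--Gysin step by a direct spectral-sequence analysis of $HF(\Gamma_L,\Gamma_L)=0$.
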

Note that the spherical monotonicity of $(\Sigma, \omega_{_{\Sigma}})$
is automatically satisfied e.g. when $\Sigma$ is Fano and
$b_2(\Sigma)=1$.

\begin{proof}
   Suppose by contradiction that $\tdef(X)=0$. By
   Theorem~\ref{t:def-0-lag-sphere} $(\Sigma, \omega_{_{\Sigma}})$ has
   a Lagrangian sphere, say $L \subset \Sigma$. Note that since
   $\Sigma$ is spherically monotone, the Lagrangian $L \subset \Sigma$
   is monotone too and since $L$ is simply connected its minimal
   Maslov number is $N_L = 2C_{\Sigma}$.

   Put $W = X \setminus \Sigma$ endowed with the symplectic form
   $\omega_W$ induced from $X$ (which in turn is induced from
   ${\mathbb{C}}P^N$).  We now appeal to the Lagrangian circle bundle
   construction introduced in~\cite{Bi:Nonintersections,
     Bi-Ci:closed}. We briefly recall the construction. Pick a tubular
   neighborhood $\mathcal{U}$ of $\Sigma$ in $X$ whose boundary
   $\partial{\mathcal{U}}$ is a circle bundle over $\Sigma$. Denote
   this circle bundle by $\pi: \partial{\mathcal{U}} \to \Sigma$. Then
   $\Gamma_L = \pi^{-1}(L)$ is the total space of a circle bundle over
   $L$, embedded inside $W$.  By~\cite{Bi:Nonintersections}, for a
   careful choice of $\mathcal{U}$ the submanifold $\Gamma_L$ is
   Lagrangian in $W$. Moreover, since $L$ is monotone $\Gamma_L$ is
   monotone too and has the same minimal Maslov number: $N_{\Gamma_L}
   = N_L = 2C_{\Sigma}$. (See~\cite{Bi:Nonintersections} for more
   details.)

   Denote by $(\widehat{W}, \widehat{\omega}_W)$ the symplectic
   completion of the symplectic Stein manifold $(W, \omega_W)$
   (see~\cite{Bi-Ci:Stein, Bi:Nonintersections} for the details).  By
   the results of~\cite{Bi-Ci:Stein}, $\Gamma_L$ is Hamiltonianly
   displaceable (i.e. there exists a compactly supported Hamiltonian
   diffeomorphism $h: (\widehat{W}, \widehat{\omega}_W)
   \longrightarrow (\widehat{W}, \widehat{\omega}_W)$ such that
   $h(\Gamma_L) \cap \Gamma_L = \emptyset$). In particular,
   $HF(\Gamma_L, \Gamma_L)=0$.

   One can arrive now at a contradiction by using an alternative
   method to compute $HF(\Gamma_L, \Gamma_L)$ such as the Oh spectral
   sequence~\cite{Oh:spectral, Bi:Nonintersections}. (This is a
   spectral sequence whose initial page is the singular homology of
   $\Gamma_L$ and which converges to $HF(\Gamma_L, \Gamma_L)$, which
   is $0$ in our case.) We will not perform this computation here
   since the relevant part of it has already been done
   in~\cite{Bi:Nonintersections}, hence we will use the latter.

   Here are the details. We first claim that the bundle
   $\pi|_{\Gamma_L}: \Gamma_L \to L$ is topologically trivial. To see
   this denote by $N_{\Sigma /X}$ the normal bundle of $\Sigma$ in
   $X$, viewed as a complex line bundle. Note that $\Gamma_L \to L$ is
   just the circle bundle associated to $N_{\Sigma /X}|_L$. Thus it is
   enough to show that $N_{\Sigma/X}|_L$ is trivial. Denote by $c \in
   H^2(\Sigma;\mathbb{Z})$ the first Chern class of $N_{\Sigma/X}$ and
   by $c_{\mathbb{R}}$ its image in $H^2(\Sigma;\mathbb{R})$.
   Similarly, denote by $c|_L$ and by ${c_\mathbb{R}}|_L$ the
   restrictions of $c$ and $c_{\mathbb{R}}$ to $L$. As $\Sigma \subset
   X$ is a hyperplane section we have $c_{\mathbb{R}} =
   [\omega_{_{\Sigma}}]$. But $L \subset (\Sigma, \omega_{_{\Sigma}})$
   is Lagrangian hence $c_{\mathbb{R}}|_L = 0$. As $H^*(L;\mathbb{Z})$
   has no torsion ($L$ is a sphere) it follows that $c|_L = 0$ too.
   Thus the restriction $N_{\Sigma/X}|_L$ of $N_{\Sigma/X}$ to $L$ has
   zero first Chern class. This implies that the line bundle
   $N_{\Sigma/X}|_L \to L$ is trivial (as a smooth complex line
   bundle). In particular $\Gamma_L \to L$ is a trivial circle bundle.

   Since $\Gamma_L \approx L \times S^1$ we have
   $H^i(\Gamma_L;\mathbb{Z}_2) = \mathbb{Z}_2$ for $i=0,1, n-1, n$ and
   $H^i(\Gamma_L;\mathbb{Z}_2)=0$ for every other $i$. By
   Proposition~6.A of~\cite{Bi:Nonintersections} we have $2C_{\Sigma}
   \mid n$. A contradiction. (Note that the conditions $n \geq 3$ and
   $C_{\Sigma} \geq 2$ in the statement of the theorem are in fact
   required for Proposition~6.A of~\cite{Bi:Nonintersections} to
   hold.)
\end{proof}

\subsection{Other approaches to proving Corollary~\ref{mc:periodicity}}
\cntrsb \label{sb:different-prf-periodicity}

Here we briefly outline an alternative approach to proving
Corollary~\ref{mc:periodicity} and possibly Theorem~\ref{mt:om-inv},
based on the subcriticality of $X \setminus \Sigma$ that was
established in Theorem~\ref{mt:subcrit}.

Put $W = X \setminus \Sigma$ and $\omega_W$ be the symplectic form on
$W$ induced from that of $X$. Let $\mathcal{U}$ be a tubular
neighborhood of $\Sigma$ in $X$ as in the proof of
Theorem~\ref{t:subcrit-small-dual}. The boundary $P =
\partial{\mathcal{U}}$ of $\mathcal{U}$ is a circle bundle $\pi : P
\longrightarrow \Sigma$ over $\Sigma$. Consider the embedding $$i: P
\longrightarrow W \times \Sigma, \quad i(p) = (p, \pi(p)).$$ Denote by
$\Gamma_{P} = i(P) \subset W \times \Sigma$ the image of $i$.  By the
results of~\cite{Bi:Nonintersections}, one can choose $\mathcal{U}$ in
such a way that there exists a positive constant (depending on the
precise choice of $\mathcal{U}$) such that $i(P)$ is a Lagrangian
submanifold of $(W \times \Sigma, \omega_W \oplus -c
\omega_{_{\Sigma}})$. (Note the minus sign in front of
$\omega_{_{\Sigma}}$.) Moreover, the Lagrangian $\Gamma_P$ is monotone
and its minimal Maslov number is $N_P = 2C_{\Sigma}$, where
$C_{\Sigma}$ is the minimal Chern number of $\Sigma$. So by the
results recalled in~\S\ref{S:PDS} we have $N_P = n+k$. Note that
$\dim_{\mathbb{R}} \Gamma_P = 2n+1$.

As $W$ is subcritical it follows that $\Gamma_P$ can be Hamiltonianly
displaced in the completion $(\widehat{W} \times \Sigma,
\widehat{\omega}_W \oplus -c \omega_{_{\Sigma}})$ and therefore
$$HF(\Gamma_P, \Gamma_P) = 0.$$ (See~\cite{Bi:Nonintersections} for
the details. See also the proof of Theorem~\ref{t:subcrit-small-dual}
above.) Note that in order to use here Floer cohomology with ground
coefficient ring $\mathbb{Z}$ we need to have $\Gamma_P$ oriented and
endowed with a spin structure. In our case, $\Gamma_P$ carries a
natural orientation and it is easy to see that it has a spin structure
(in fact, it is easy to see that $H^1(P;\mathbb{Z}_2)=0$ hence this
spin structure is unique).

We now appeal to the Oh spectral sequence~\cite{Oh:spectral,
  Bi:Nonintersections}. Recall that this is a spectral sequence whose
first page is the singular cohomology of $\Gamma_P$ and which
converges to the Floer cohomology $HF(\Gamma_P, \Gamma_P)$. A simple
computation shows that in our case, due to the fact that $N_P = n+k$,
this sequence collapses at the second page, and moreover since
$HF(\Gamma_P, \Gamma_P)=0$ this second page is $0$ everywhere.  By
analyzing the differentials on the first page we obtain the following
exact sequences for every $j \in \mathbb{Z}$:
\begin{equation} \label{eq:spec-seq-gamma-L}
   H^{j-1+n+k}(\Gamma_P;\mathbb{Z}) \longrightarrow
   H^j(\Gamma_P;\mathbb{Z}) \longrightarrow
   H^{j+1-n-k}(\Gamma_P;\mathbb{Z}).
\end{equation}
This implies many restrictions on the cohomology of $P \approx
\Gamma_P$, e.g. that $H^j(P;\mathbb{Z}) = 0$ for every $n-k+3 \leq j
\leq n+k-2$, that $H^j(P;\mathbb{Z}) \cong H^{j-1+n+k}(P;\mathbb{Z})$
for every $0 \leq j \leq n-k-2$ and more.  We now substitute this
information into the Gysin sequences of the bundle $P \longrightarrow
\Sigma$ (whose Euler class is just the hyperplane class $h$
corresponding to the embedding $\Sigma \subset {\mathbb{C}}P^N$).
Combining the calculation via the Gysin sequences together with the
Lefschetz theorem yields the desired periodicity for the cohomology of
$\Sigma$. We omit the details as they are rather straightforward.

One could try to push the above argument further by using the methods
of~\cite{Bi-Kh:floer-gysin} (see e.g.~\S 14 in that paper) in order to
prove Theorem~\ref{mt:om-inv} via Lagrangian Floer cohomology.
However, this would require an extension of the methods
of~\cite{Bi-Kh:floer-gysin} to coefficients in $\mathbb{Z}$ rather
than just $\mathbb{Z}_2$.

\section{What happens in the non-monotone case} \cntrs
\label{s:non-monotone}
Here we briefly explain what happens in Theorem~\ref{t:om-inv-gnrl}
when the condition ``$c_1^X(A) = \lambda h(A)$ for some $\lambda >
0$'' is not satisfied, e.g. when $(\Sigma, \omega_{_{\Sigma}})$ is not
spherically monotone (see Definition~\ref{df:monotone}).

We will need to change here a bit our coefficient ring for the quantum
cohomology since $(\Sigma, \omega_{_{\Sigma}})$ is not spherically
monotone anymore. Denote by $\mathcal{A}$ the ring of all formal
series in the variables $q$, $T$
$$P(q,T) = \sum_{i,j} a_{i,j} q^i T^{s_j}, \quad a_{i,j} \in \mathbb{Z}, 
s_j \in \mathbb{R},$$ which satisfy that for every $C \in \mathbb{R}$
$$\# \bigl\{ (i,j) \mid a_{i,j} \neq 0 \; \textnormal{and } s_j>C \bigr\} 
< \infty.$$ This ring is a special case of the more general Novikov
ring commonly used in the theory of quantum cohomology. With this ring
as coefficients, the definition of the quantum product $*$ on
$QH(\Sigma;\mathcal{A})$ is very similar to what we have had before.
Namely, the powers of the variable $q$ will encode Chern numbers of
rational curves involved in the definition of $*$ and the powers of
$T$ encode their symplectic areas. See~\cite{McD-Sa:Jhol-2} for more
details.

We now turn to the Hamiltonian fibration $\pi_{\ell}: \widetilde{X}
\longrightarrow \ell$.  We will use here the construction and notation
from~\S\ref{s:small-dual-fibr} and~\S\ref{s:proof-om-inv}.
Additionally, denote by $\widetilde{i}:\Sigma \longrightarrow
\widetilde{X}$ the inclusion of the fiber into the total space of the
fibration $\pi_{\ell} : \widetilde{X} \longrightarrow \ell$. Recall
also from~\S\ref{s:proof-om-inv} that we have a canonical injection
$j: H_2(X;\mathbb{Z}) \longrightarrow H_2(\widetilde{X};\mathbb{Z})$
which satisfies $j \circ p_* = \textnormal{id}$, where $p:
\widetilde{X} \longrightarrow X$ is the blow down map. Denote by
$B_{\ell} \subset X$ the base locus of the pencil $\ell$. With this
notation we have:
\begin{equation}\label{eq:wtld-i}
   \widetilde{i}_*(\alpha) = j(\alpha) - ([B_{\ell}] \cdot \alpha)F =  
   j(\alpha) - 
   \langle [\omega_{_{\Sigma}}], \alpha \rangle F \; \; \; \forall 
   \alpha \in H_2(\Sigma;\mathbb{Z}).
\end{equation}
The symplectic form $\widetilde{\Omega}$ satisfies:
\begin{equation} \label{eq:wtld-Om-wtld-i-etc}
   \begin{aligned}
      & [\widetilde{\Omega}] = 2 p^*[\omega_{_X}] - e, \;\;
      \textnormal{ where } e \in H^2(\widetilde{X}) \textnormal{ is the
        Poincar\'{e} dual of } E,
      \\
      & \langle [\widetilde{\Omega}], j(A) \rangle = 2 \langle
      [\omega_{_X}], A \rangle \; \;\; \forall A \in H_2(X;\mathbb{Z}), \\
      & \langle [\widetilde{\Omega}], F \rangle = 1.
   \end{aligned}
\end{equation}

The Seidel element of the fibration $\pi_{\ell}:
\widetilde{X} \longrightarrow \ell$ will now be:
$$S(\pi_{\ell}) =
\sum_{\widetilde{A} \in H_2^{\pi}}
\mathcal{S}(\widetilde{A};\widetilde{J}) \otimes
q^{\nu(\widetilde{A})}T^{\langle [\widetilde{\Omega}], \widetilde{A}
  \rangle} \in QH^{-2c_0(\pi_{\ell})}(\Sigma;\mathcal{A}).$$

Some parts of the proof of Theorem~\ref{r:pencils} go through in this
new setting. More specifically, Lemma~\ref{l:curves-1} as well as
Lemma~\ref{l:sect-class-F} continue to hold (with the same proofs) and
it follows that the contribution of the class $F$ to the Seidel
element is as before, namely 
\begin{equation} \label{eq:SF} 
   \mathcal{S}(F) = [\omega_{_{\Sigma}}].
\end{equation}
If we choose as before the reference class of sections to be $F$ then
the total degree of the Seidel element $S(\pi_{\ell})$ continues to be
$2$.

In contrast to the above, Lemma~\ref{l:d<1} does not hold anymore
since we might have holomorphic sections in the class $\widetilde{A} =
j(A) + d F$ with $d \leq 0$. (We will see
in~\S\ref{sb:non-monotone-example} an example in which this is indeed
the case.) Nevertheless we can still obtain some information on
$S(\pi_{\ell})$ beyond~\eqref{eq:SF}. Let $d \in \mathbb{Z}$ and put
$\widetilde{A} = j(A) + dF$ where $A \in H_2^S(X)$. Recall from
Lemma~\ref{l:curves-1} that $\widetilde{A}$ might contribute to
$S(\pi_{\ell})$ only if the following three conditions are satisfied:
\begin{enumerate}
  \item $d\leq 1$.
  \item $[\Sigma] \cdot A = 1-d$.
  \item $A \in \mathcal{R}(X)$ where $\mathcal{R}(X) \subset H_2^S(X)$
   is the positive cone generated by those classes that can be
   represented by $J_0$-holomorphic rational curves.
   (See~\S\ref{s:proof-om-inv}.)
\end{enumerate}
Moreover, $d=1$ iff $A = 0$.

The case $d=1$ has already been treated in~\eqref{eq:SF}. Assume that
$d \leq 0$. A simple computation shows that
$$\langle [\widetilde{\Omega}], \widetilde{A} \rangle = 2-d, \quad 
\langle c_1^v, \widetilde{A} \rangle = -1 + \langle c_1^X -h, A
\rangle.$$ Here $h \in H^2(X)$ is the hyperplane class corresponding
to the embedding $X \subset {\mathbb{C}}P^N$, i.e. $h = PD([\Sigma])$.
This proves the following theorem:
\begin{thm} \label{t:gnrl-S-pi-ell} Let $X \subset {\mathbb{C}}P^N$ be
   an algebraic manifold with small dual and $\Sigma \subset X$ a
   hyperplane section. Then the Seidel element $S(\pi_{\ell})$
   corresponding to the fibration $\pi_{\ell}:\widetilde{X}
   \longrightarrow \ell$ is given by:
   \begin{equation} \label{eq:S-gnrl} S(\pi_{\ell}) =
      [\omega_{_{\Sigma}}]T + \sum_{d \leq 0, A} \mathcal{S}(j(A) +
      dF) T^{2-d} q^{(c_1^X(A)-h(A))/ C_{\Sigma}},
   \end{equation}
 where the sum is taken over all $d \leq 0$ and $A
   \in \mathcal{R}(X)$ with:
   \begin{enumerate}
     \item $h(A) = 1-d$.
     \item $3-d-n \leq c_1^X(A) \leq 2-d$.
   \end{enumerate}
   In particular, if $-K_X - \Sigma$ is nef and $\min \{(-K_X -
   \Sigma) \cdot A \mid A \in \mathcal{R}(X)\} \geq 2$ then
   $$S(\pi_{\ell}) = [\omega_{_{\Sigma}}]T.$$
\end{thm}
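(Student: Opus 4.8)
The plan is to collect the ingredients that have already been assembled in this section and then run one short numerical argument for the closing assertion; most of the work is the bookkeeping of gradings, since the analytic input is precisely that of Theorem~\ref{t:om-inv-gnrl}. The starting point is that, by~\eqref{eq:H_2^pi} together with Lemma~\ref{l:curves-1} (which, as remarked, holds verbatim in the non-monotone setting), a class $\widetilde A\in H_2^{\pi_\ell}$ can have $\mathcal S(\widetilde A)\neq 0$ only if $\widetilde A = j(A)+dF$ with $A\in\mathcal R(X)$, $d\le 1$ and $h(A)=[\Sigma]\cdot A = 1-d$, with $d=1$ forcing $A=0$. Thus $S(\pi_\ell)$ splits as the single $d=1$ contribution plus a sum of $d\le 0$ contributions, and I would treat the two cases in turn.

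For the $d=1$ class, which is $F$ itself, I would take $F$ as the reference section, so that $c_0(\pi_\ell)=c_1^v(F)=-1$ and $S(\pi_\ell)$ has total degree $2$. By Lemma~\ref{l:sect-class-F} and the compactness-plus-regularity argument from the proof of Theorem~\ref{t:om-inv-gnrl} --- which used only that $F$ has minimal $\widetilde\Omega$-area among the classes meeting the proper transform $\widetilde\Sigma$, a fact unaffected by monotonicity --- one obtains $\mathcal S(F)=\mathrm{PD}([B_\ell])=[\omega_{_{\Sigma}}]$. Since $\langle[\widetilde\Omega],F\rangle=1$ and $\nu(F)=0$, this contributes the term $[\omega_{_{\Sigma}}]T$, which is~\eqref{eq:SF}.

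For the $d\le 0$ classes $\widetilde A=j(A)+dF$ with $h(A)=1-d$, I would re-run the intersection-theoretic computation behind~\eqref{eq:c1-comp-1} (using~\eqref{eq:c_1-wtldX} and~\eqref{eq:inters-prod-wtldX}) to get $c_1^v(\widetilde A)=c_1^X(A)+d-2=-1+(c_1^X-h)(A)$, and~\eqref{eq:wtld-Om-wtld-i-etc} to get $\langle[\widetilde\Omega],\widetilde A\rangle=2-d$. Feeding $c_1^v(\widetilde A)$ into the degree bound~\eqref{eq:cbound}, i.e.\ $1-n\le c_1^v(\widetilde A)\le 0$, yields precisely the constraint $3-d-n\le c_1^X(A)\le 2-d$ in the statement, while the $q$-exponent is $\nu(\widetilde A)=(c_1^X(A)-h(A))/C_{\Sigma}$ and the $T$-exponent is $2-d$; collecting all surviving terms gives~\eqref{eq:S-gnrl}. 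One should also note that the resulting formal sum is a genuine element of $QH(\Sigma;\mathcal A)$, which follows from Gromov compactness for $\pi_\ell$-holomorphic sections of bounded $\widetilde\Omega$-energy.

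It remains to establish the final assertion, which is the only genuinely new step and is very short. Identify the class $-K_X-\Sigma$ with $c_1^X-h$. The stated hypotheses --- $-K_X-\Sigma$ nef and $\min\{(-K_X-\Sigma)\cdot A\mid A\in\mathcal R(X)\}\ge 2$ --- imply $(c_1^X-h)(A)\ge 2$ for every nonzero $A\in\mathcal R(X)$ (write $A=\sum a_i[C_i]$ with $a_i\in\mathbb Z_{\ge 0}$ not all zero; some $a_i\ge 1$ contributes $\ge 2$ and the remaining terms are $\ge 0$ by nefness). But any $d\le 0$ term of~\eqref{eq:S-gnrl} has $h(A)=1-d\ge 1$, hence $A\neq 0$, whereas its defining constraint $c_1^X(A)\le 2-d$ gives $(c_1^X-h)(A)=c_1^X(A)-(1-d)\le 1$ --- a contradiction. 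Therefore no $d\le 0$ term survives and $S(\pi_\ell)=[\omega_{_{\Sigma}}]T$. I do not anticipate any analytic obstacle: the compactness of the $F$-sections near $\widetilde J_0$, the regularity of $\widetilde J_0$ on them, and the identification $\mathcal S(F)=[\omega_{_{\Sigma}}]$ are all inherited unchanged from the proof of Theorem~\ref{t:om-inv-gnrl}; the only point requiring attention is keeping the $q$- and $T$-gradings and the reference-section normalization mutually consistent throughout.
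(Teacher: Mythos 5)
Your proposal is correct and follows essentially the same route as the paper: it invokes Lemma~\ref{l:curves-1} and Lemma~\ref{l:sect-class-F} (valid without monotonicity) to isolate the $F$-contribution $[\omega_{_{\Sigma}}]T$, and then the computations $\langle[\widetilde{\Omega}],\widetilde{A}\rangle=2-d$, $c_1^v(\widetilde{A})=-1+(c_1^X-h)(A)$ together with the degree bound~\eqref{eq:cbound} to obtain the constraints in~\eqref{eq:S-gnrl}. Your closing numerical argument for the nef case, showing any $d\le 0$ term would force $(c_1^X-h)(A)\le 1$, is exactly the intended (and in the paper only implicit) deduction.
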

Note that the powers of $T$ in the second summand of~\eqref{eq:S-gnrl}
are always $\geq 2$ and the powers of $q$ in the second summand are
always $\leq 1$ (but might in general be also negative).

Here is a non-monotone example, not covered by
Theorem~\ref{t:om-inv-gnrl} but to which Theorem~\ref{t:gnrl-S-pi-ell}
does apply. Let $X = {\mathbb{C}}P^{m+r} \times {\mathbb{C}}P^m$ with
$m\geq 2$ and $r \geq 1$ be embedded in
${\mathbb{C}}P^{(m+1)(m+r+1)-1}$ by the Segre embedding. It is well
known that $\tdef(X) = r$ (see Theorem~6.5 in~\cite{Te:dual-arxiv}).
It is easy to see that $c_1^X - h$ is ample and since $m \geq 2$ its
minimal value on $\mathcal{R}(X)$ is $m \geq 2$. It follows that
$S(\pi_{\ell}) = [\omega_{_{\Sigma}}]T \in QH^2(\Sigma;\mathcal{A})$.

This calculation fails to be true when $m=1$, as will be shown
in~\S\ref{sb:non-monotone-example} below.

\subsection{A non-monotone example} \cntrsb
\label{sb:non-monotone-example}

Consider the algebraic manifold $\Sigma = \mathbb{C}P^1 \times
\mathbb{C}P^1$. Denote by $f, s \in H_2(\Sigma;\mathbb{Z})$ the
classes 
\begin{equation} \label{eq:fs}
   f = [\textnormal{pt} \times \mathbb{C}P^1], \quad s =
   [\mathbb{C}P^1 \times \textnormal{pt}].
\end{equation}
We have $H_2^S(\Sigma) = H_2(\Sigma;\mathbb{Z}) = \mathbb{Z} s \oplus
\mathbb{Z} f$. Denote by $\alpha, \beta \in H^2(\Sigma)$ the
Poincar\'{e} duals of $f$, $s$ respectively, i.e.:
\begin{equation} \label{eq:alpabetta}
   \langle \alpha, s \rangle = 1, \; \; \langle \alpha, f \rangle = 0, 
   \quad 
   \langle \beta, s \rangle = 0, \; \; \langle \beta, f \rangle = 1.
\end{equation}
A simple computation shows that $$c_1^{\Sigma} = 2\alpha + 2\beta.$$

Before we continue, a small remark about our algebro-geometric
conventions is in order. For a complex vector space $V$ we denote by
$\mathbb{P}(V)$ the space of complex lines through $0$ ({\em not} the
space of hyperplanes or $1$-dimensional quotients of $V$). Similarly,
for a vector bundle $E \to B$ we denote by $\mathbb{P}(E) \to B$ the
fiber bundle whose fiber over $x \in B$ is $\mathbb{P}(E_x)$, as just
defined, i.e. the space of lines through $0$ in $E_x$.  We denote by
$T \to \mathbb{P}(E)$ the tautological bundle, which by our
convention, is defined as the line bundle whose fiber over $l \in
\mathbb{P}(E_x)$ is the line $l$ itself. We denote by $T^*$ the dual
of $T$, i.e.  $T^*_l = \hom (l, \mathbb{C})$. For example, with these
conventions, for $E = \mathbb{C}^{n+1}$ (viewed as a bundle over $B =
\textnormal{pt}$) we have $T^* = \mathcal{O}_{{\mathbb{C}}P^n}(1)$,
and $T^*$ is ample.

Consider now the bundle $\mathcal{O}_{\mathbb{C}P^1}(-1)$ over
$\mathbb{C}P^1$. There is an obvious inclusion $$\iota:
\mathcal{O}_{\mathbb{C}P^1}(-1) \longrightarrow
\mathcal{O}_{\mathbb{C}P^1} \oplus \mathcal{O}_{\mathbb{C}P^1}$$
coming from viewing an element $l \in \mathbb{C}P^1$ as a subspace $l
\subset \mathbb{C} \oplus \mathbb{C}$.  Consider now the inclusion:
\begin{equation} \label{eq:incl-1} \mathcal{O}_{\mathbb{C}P^1}(-1)
   \oplus \mathcal{O}_{\mathbb{C}P^1}(-1) \xrightarrow{\textnormal{id}
     \oplus \iota} \mathcal{O}_{\mathbb{C}P^1}(-1) \oplus
   \mathcal{O}_{\mathbb{C}P^1} \oplus \mathcal{O}_{\mathbb{C}P^1}.
\end{equation}
Denote by $E$ the bundle on the right-hand side of this inclusion and
by $E'$ the bundle on the left-hand side. Put $$X = \mathbb{P}(E)$$
and denote by $\textnormal{pr}: X \longrightarrow \mathbb{C}P^1$ the
bundle projection.  Note that $\mathbb{P}(E') \cong
\mathbb{P}(\mathcal{O}_{\mathbb{C}P^1} \oplus
\mathcal{O}_{\mathbb{C}P^1}) = \mathbb{C}P^1 \times \mathbb{C}P^1 =
\Sigma$ hence~\eqref{eq:incl-1} induces an embedding $i_{\Sigma,X}:
\Sigma \longrightarrow X$. Let $T \to X$ be the tautological bundle
(as previously defined) and consider the bundle
$$\mathcal{L} = T^* \otimes p^* \mathcal{O}_{\mathbb{C}P^1}(1).$$ 

\begin{thm} \label{t:ex-cp1xcp1-2} The line bundle $\mathcal{L}$ is
   very ample and the projective embedding of $X$ induced by it has
   $\tdef = 1$. The embedding of $\Sigma$, $i_{\Sigma,X}(\Sigma)
   \subset X$, is a smooth hyperplane section of the projective
   embedding of $X$ induced by $\mathcal{L}$. Moreover if
   $\omega_{_X}$ is the symplectic structure on $X$ induced by the
   projective embedding of $\mathcal{L}$ and $\omega_{_{\Sigma}} =
   i_{\Sigma,X}^* \omega_{_X}$ then we have:
   $$[\omega_{_{\Sigma}}] = 2\alpha + \beta.$$ If $\ell$ is a pencil in
   the linear system $| \mathcal{L} |$ lying in the complement of the
   dual variety $X^*$ then the Seidel element of the fibration
   $\pi_{\ell}:\widetilde{X} \longrightarrow \ell$ associated to
   $\ell$ is: $$S(\pi_{\ell})= (2 \alpha + \beta) T + \beta T^2.$$
\end{thm}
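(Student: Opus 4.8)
The plan is to realize $X$, with the embedding given by $|\mathcal{L}|$, as an explicit rational normal scroll, to deduce assertions (a), (c), (d) by direct bundle bookkeeping together with a coordinate picture, to compute the defect (b) by writing out the dual variety, and finally to obtain $S(\pi_\ell)$ by feeding this geometry into Theorem~\ref{t:gnrl-S-pi-ell}. Since $p_*T^*=E^*$, the projection formula gives $p_*\mathcal{L}=E^*\otimes\mathcal{O}_{\mathbb{C}P^1}(1)=\mathcal{O}_{\mathbb{C}P^1}(2)\oplus\mathcal{O}_{\mathbb{C}P^1}(1)\oplus\mathcal{O}_{\mathbb{C}P^1}(1)$, so $\dim H^0(X,\mathcal{L})=7$; writing $([x_0:x_1];[t:a:b])$ for the natural coordinates on $X$ ($[x_0:x_1]$ on the base $\mathbb{C}P^1$, $[t:a:b]$ fibrewise along the three summands of $E$, with $t$ along the $\mathcal{O}_{\mathbb{C}P^1}(-1)$ factor), the morphism defined by $|\mathcal{L}|$ is
\[
([x_0:x_1];[t:a:b])\longmapsto[tx_0^2:tx_0x_1:tx_1^2:ax_0:ax_1:bx_0:bx_1]\in\mathbb{C}P^6,
\]
with image the smooth rational normal $3$-fold scroll of degree $4$; very ampleness of $\mathcal{L}$ (assertion (a)) is the standard fact that $\mathcal{O}_{\mathbb{P}(E)}(1)\otimes p^*\mathcal{O}_{\mathbb{C}P^1}(k)$ becomes very ample once the associated scroll degrees are $\ge1$, or one checks separation of points and tangents directly from this formula. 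Next, the inclusion \eqref{eq:incl-1} has cokernel $E/E'\cong\mathcal{O}_{\mathbb{C}P^1}(1)$, and the composite $T\hookrightarrow p^*E\twoheadrightarrow p^*(E/E')$ is a section of $\mathrm{Hom}(T,p^*\mathcal{O}_{\mathbb{C}P^1}(1))=T^*\otimes p^*\mathcal{O}_{\mathbb{C}P^1}(1)=\mathcal{L}$ whose zero divisor is $\mathbb{P}(E')=i_{\Sigma,X}(\Sigma)$; in the coordinates above this section is $ax_1-bx_0$, so $i_{\Sigma,X}(\Sigma)=\{ax_1=bx_0\}$ is a smooth hyperplane section, giving (c). On this locus $(a,b)=u(x_0,x_1)$, and the displayed embedding restricts to the Segre--Veronese embedding $([x_0:x_1];[t:u])\mapsto[tx_0^2:tx_0x_1:tx_1^2:ux_0^2:ux_0x_1:ux_1^2]$ of $\mathbb{C}P^1\times\mathbb{C}P^1$ into $\mathbb{C}P^5$ associated to $\mathcal{O}(2)\boxtimes\mathcal{O}(1)$ (degree $2$ in the base factor, degree $1$ in the fibre factor); identifying $f$ with the fibre class of $\Sigma=\mathbb{P}(E')\to\mathbb{C}P^1$ and $s$ with a section class, we read off $[\omega_{_{\Sigma}}]=c_1(\mathcal{L}|_\Sigma)=2\alpha+\beta$, which is (d) (the check $c_1^{\Sigma}=2\alpha+2\beta$ is consistent with this identification).

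For the defect (b), a hyperplane $H\in(\mathbb{C}P^6)^*$ is a section $s=(s_0,s_1,s_2)\in H^0(\mathbb{C}P^1,\mathcal{O}(2)\oplus\mathcal{O}(1)\oplus\mathcal{O}(1))$ with $X\cap H=\{s=0\}$. As each fibre $\mathbb{C}P^2$ of $X\to\mathbb{C}P^1$ is embedded linearly, writing the equation of $X\cap H$ over an affine chart of the base as $\phi_0s_0+\phi_1s_1+\phi_2s_2$ in fibre coordinates $[\phi_0:\phi_1:\phi_2]$ and computing partials shows that $X\cap H$ is singular at a point over $u\in\mathbb{C}P^1$ if and only if $s_0(u)=s_1(u)=s_2(u)=0$, and that the singular (equivalently contact) locus over such a $u$ is the line $\{\phi_0s_0'(u)+\phi_1s_1'(u)+\phi_2s_2'(u)=0\}\subset\mathbb{C}P^2$. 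Hence $X^*$ is the locus of $[s]$ having a common zero on $\mathbb{C}P^1$ --- equivalently, $s_1,s_2$ proportional to a common linear form $\ell$ and $\ell\mid s_0$, a $4$-dimensional (codimension $2$) subvariety of $\mathbb{C}P^6$ --- and for generic $H\in X^*_{\mathrm{sm}}$ the contact locus is a linearly embedded $\mathbb{C}P^1$. Therefore $\tdef(X)=1$.

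Now apply Theorem~\ref{t:gnrl-S-pi-ell} with $n=3$. With $\xi=c_1(T^*)=c_1(\mathcal{O}_{\mathbb{P}(E)}(1))$ one has $c_1^X=3\xi+p^*\eta$ and $h=\xi+p^*\eta$ ($\eta$ the point class of the base), so $c_1^X-h=2\xi$; since $\xi$ is nef, every $A\in\mathcal{R}(X)$ satisfies $c_1^X(A)-h(A)=2(\xi\cdot A)\ge0$, and the constraint in Theorem~\ref{t:gnrl-S-pi-ell} forces $\xi\cdot A=0$. The curves with $\xi\cdot A=0$ are precisely the non-negative multiples of the class $\sigma$ of a ruling line $C_{[\mu:\nu]}=\{([x_0:x_1];[0:\mu:\nu])\}$ of the divisor $D=\{t=0\}\cong\mathbb{C}P^1\times\mathbb{C}P^1$ (these are the lines contracted by $|\mathcal{O}_{\mathbb{P}(E)}(1)|$), and $h(m\sigma)=c_1^X(m\sigma)=m$. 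Thus \eqref{eq:S-gnrl} reduces to $S(\pi_\ell)=[\omega_{_{\Sigma}}]T+\sum_{m\ge1}\mathcal{S}(j(m\sigma)+(1-m)F)\,T^{m+1}$, whose leading term is $(2\alpha+\beta)T$ by \eqref{eq:SF} and (d). For $m=1$, arguing as in Lemma~\ref{l:sect-class-F}: by Lemma~\ref{l:curves-1} a $\widetilde J_0$-holomorphic section of class $j(\sigma)$ projects to an isomorphism $\ell\to C$ onto a ruling line $C$ disjoint from $B_\ell$, so these sections are the lifts $\widetilde v_C$ (for such $C$ one computes $\widetilde v_C^*T\widetilde X=\mathcal{O}(2)\oplus\mathcal{O}\oplus\mathcal{O}(-1)$, hence $\widetilde J_0$ is regular there and the moduli space is a pseudocycle isomorphic to $\mathbb{C}P^1$ minus a finite set); since $C_{[\mu:\nu]}\cap\{ax_1=bx_0\}$ is the single point $([\mu:\nu];[0:1])$, the evaluation map sends this family onto the curve $s=[\mathbb{C}P^1_{\mathrm{base}}\times\mathrm{pt}]$ with degree $1$, so $\mathcal{S}(j(\sigma))=\beta$ and the $T^2$-coefficient is $\beta$.

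It remains to show $\mathcal{S}(j(m\sigma)+(1-m)F)=0$ for every $m\ge2$; this is the main obstacle. Geometrically, a holomorphic section of such a class projects to a rational map $\ell\to X$ of class $m\sigma$, whose image --- an irreducible reduced curve supported on the fibres of $D\to\mathbb{C}P^1$ --- must be a single ruling line $C$, so the section lies in $p^{-1}(C)=\widetilde C\cup\bigcup_iE_{q_i}$ (strict transform together with the exceptional fibres over $C\cap B_\ell$); since $\pi_\ell$ restricts to a degree $\le1$ map on $\widetilde C$ and to an isomorphism on each $E_{q_i}$, the only sections supported there have class $F$ or $j(\sigma)$, so no $\widetilde J_0$-holomorphic section has class $j(m\sigma)+(1-m)F$ when $m\ge2$. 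Promoting this to a regular $\widetilde J$ is the delicate point: a bubbled $\widetilde J_0$-limit in that class again has all components mapping into $D$ (nefness of $\xi$ and total $\xi$-degree $0$), hence splits into a section component of class $F$ or $j(\sigma)$ plus bubbles of class $F$ (exceptional fibres) or $j(\phi)-F$ (fibre lines of $\pi_\ell$, where $\phi$ is the fibre-line class of $X\to\mathbb{C}P^1$), and pairing the total class with $p^*h$ shows these cannot sum to $j(m\sigma)+(1-m)F$ for $m\ge2$. This Gromov-compactness and transversality bookkeeping is where the work lies; everything else is bundle computation. Granting it, the sum collapses and $S(\pi_\ell)=(2\alpha+\beta)T+\beta T^2$, as claimed.
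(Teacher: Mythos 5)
Most of your algebro-geometric groundwork is sound and in places takes a genuinely different route from the paper: you compute $X^*$ directly as the locus of sections of $\mathcal{O}(2)\oplus\mathcal{O}(1)\oplus\mathcal{O}(1)$ with a common zero, whereas the paper realizes $X$ as a hyperplane section of the Segre variety $Y=\mathbb{C}P^1\times\mathbb{C}P^3$ and reads off $\tdef(X)=\tdef(Y)-1=1$ from~\eqref{eq:def-Sigma-X}; both work. Your identification of the contributing classes $j(m\sigma)+(1-m)F$, the leading term, and the $m=1$ computation of $\mathcal{S}(j(\sigma))=\beta$ all match the paper's $r=-1$ and $r=0$ cases (your $m$ is the paper's $r+1$).

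The genuine gap is exactly at the step you defer, the vanishing of $\mathcal{S}(j(m\sigma)+(1-m)F)$ for $m\geq 2$, and your sketch of it is not merely incomplete but wrong. Your classification of the possible bubble components is incorrect: a bubble must lie in a fiber of $\pi_{\ell}$, so the class $F$ (an exceptional fiber of $p|_E$, which is a \emph{section} of $\pi_{\ell}$) cannot occur as a bubble, and the fiber-line class $j(\phi)-F=\widetilde{i}_*(f)$ is excluded by your own nefness argument since $\xi\cdot f_X=1\neq 0$. What you have omitted is the class that actually does occur, namely $\widetilde{i}_*(s-f)=j(\sigma)-F$, the class of the $(-2)$--curve sitting in each Hirzebruch-surface fiber $F_2$ of $\pi_{\ell}$; it has $\xi\cdot p_*(\,\cdot\,)=\xi\cdot s_X=0$ and so passes your test. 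Consequently your claim that ``pairing the total class with $p^*h$'' rules out degenerate configurations is false: an exceptional fiber of class $F$ over a point of $C\cap B_{\ell}$ (for $C$ one of the two special ruling lines) together with an $m$-fold cover of the $(-2)$--curve in the adjacent $F_2$ fiber has total class $F+m\bigl(j(\sigma)-F\bigr)=j(m\sigma)+(1-m)F$ for every $m\geq 1$. So cusp sections in these classes \emph{do} exist, and ruling out genuine $\widetilde{J_0}$-holomorphic sections (which you do correctly, paralleling Claim~1 of Lemma~\ref{l:sect-r=-1-or-r=0}) is not enough. The missing argument — the content of Claim~2 of Lemma~\ref{l:sect-r=-1-or-r=0} and the conclusion of its proof — is to classify these cusp sections, show there are only \emph{finitely many} of them (because $B_{\ell}$ meets each $(-2)$--curve $C_z$ in finitely many points), observe that they evaluate at $z_0$ to finitely many points $p_1,\dots,p_{\nu}\in\Sigma_0$, and then run Gromov compactness against a $2$-cycle $Q$ chosen in the complement of $p_1,\dots,p_{\nu}$ to conclude $\langle\mathcal{S}(\widetilde{A}),[Q]\rangle=0$ for all such $Q$, hence $\mathcal{S}(\widetilde{A})=0$. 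This is the crux of the whole Seidel-element computation and cannot be reduced to homological bookkeeping.
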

The proof is given in~\S\ref{sb:prf-cp1xcp1-2} below. One can easily
generalize the above example to other projective bundles and also to
higher dimensions.

Note that $[\omega_{_{\Sigma}}]$ and $c_1^{\Sigma}$ are not
proportional hence the conditions of Theorem~\ref{t:om-inv-gnrl} are
not satisfied. It is also easy to see that (for homological reasons)
$(\Sigma, \omega_{_{\Sigma}})$ does not contain any Lagrangian spheres
(c.f. Theorems~\ref{t:def-0-lag-sphere},~\ref{t:no-lag-spheres}).

The quantum product for $(\Sigma, \omega_{_{\Sigma}})$ is given by
(see~\cite{McD-Sa:Jhol-2}):
$$\alpha*\alpha = q T^2, \quad \beta*\beta = qT, \quad 
\alpha*\beta = \alpha \cup \beta, \;\; \textnormal{where }
\deg(q)=4.$$ The inverse of $S(\pi_{\ell})$ in quantum cohomology is
given by
$$S(\pi_{\ell})^{-1} = \tfrac{1}{qT^2(1-T)^2}
\bigl( -2\alpha + (1+T)\beta \bigr).$$ Here we have written
$\tfrac{1}{(1-T)^2}$ as an abbreviation for $(\sum_{q=0}^{\infty}
T^j)^2$.

In contrast to the situation in Theorem~\ref{t:ex-cp1xcp1-2} we can
exhibit the {\em same} manifold $\Sigma = \mathbb{C}P^1 \times
\mathbb{C}P^1$ as a hyperplane section of a different projective
manifold $X'$ with $\tdef = 0$, but with different induced symplectic
structure (c.f. Theorem~\ref{c:sig-hyp-sect}). This goes as follows.
Let $X' \subset {\mathbb{C}}P^5$ be the image of the degree--$2$
Veronese embedding of ${\mathbb{C}}P^3$.  It is well known that
$\tdef(X') = 0$. A simple computation shows that a smooth hyperplane
section of $X'$ is isomorphic to $\Sigma = \mathbb{C}P^1 \times
\mathbb{C}P^1$. The symplectic form $\omega'_{_{\Sigma}}$ on $\Sigma$
induced from the standard symplectic structure of ${\mathbb{C}}P^5$
satisfies
$$[\omega'_{_{\Sigma}}] = \alpha + \beta.$$ Note that $(\Sigma,
\omega'_{_{\Sigma}})$ has Lagrangian spheres. To see that, note that
(e.g. by Moser argument) $\omega'_{_{\Sigma}}$ is diffeomorphic to the
split form $\omega_0 = \sigma \oplus \sigma$, where $\sigma$ is the
standard K\"{a}hler form on $\mathbb{C}P^1$. The symplectic manifold
$(\Sigma, \omega_0)$ obviously has Lagrangian spheres, for example $L
= \{ (z, w) \mid w = \bar{z} \},$ hence so does $(\Sigma,
\omega'_{_{\Sigma}})$. (c.f.
Theorems~\ref{t:def-0-lag-sphere},~\ref{t:no-lag-spheres}).

Finally, note that $[\omega'_{_{\Sigma}}] = \alpha + \beta$ is not
invertible in the quantum cohomology $QH(\Sigma;\Lambda)$. In fact, a
simple computation shows that $(\alpha + \beta) * (\alpha -\beta) =
0$.

\subsection{Proof of Theorem~\ref{t:ex-cp1xcp1-2}} \cntrsb
\label{sb:prf-cp1xcp1-2}
Consider the inclusion
\begin{equation} \label{eq:incl-2} \mathcal{O}_{\mathbb{C}P^1}(-1)
   \oplus \mathcal{O}_{\mathbb{C}P^1} \oplus
   \mathcal{O}_{\mathbb{C}P^1} \xrightarrow{\iota \oplus
     \textnormal{id} \oplus \textnormal{id}}
   \mathcal{O}_{\mathbb{C}P^1}^{\oplus 4}.
\end{equation}
Put $Y = \mathbb{P}(\mathcal{O}_{\mathbb{C}P^1}^{\oplus 4}) =
\mathbb{C}P^1 \times {\mathbb{C}}P^3$ and denote by
$\textnormal{pr}_1:Y \to \mathbb{C}P^1$ and $\textnormal{pr}_2: Y \to
{\mathbb{C}}P^3$ the projections. The inclusion~\eqref{eq:incl-2}
gives us an obvious inclusion $i_{X,Y}: X \longrightarrow Y$. The
bundle $T^*$ naturally extends to $Y$ as $\textnormal{pr}_2^*
\mathcal{O}_{\mathbb{C}P^3}(1)$, and
$\textnormal{pr}^*\mathcal{O}_{\mathbb{C}P^1}(1)$ extends to $Y$ too
as $\textnormal{pr}_1^* \mathcal{O}_{\mathbb{C}P^1}(1)$. It follows
that
$$\mathcal{L} = \widetilde{\mathcal{L}}|_X, \; \textnormal{where }
\widetilde{\mathcal{L}} = \textnormal{pr}_2^*
\mathcal{O}_{{\mathbb{C}}P^3}(1) \otimes
\textnormal{pr}_1^*\mathcal{O}_{\mathbb{C}P^1}(1).$$ The bundle
$\widetilde{\mathcal{L}}$ is obviously very ample, hence so is
$\mathcal{L}$. Moreover it is well known that the embedding $Y \subset
{\mathbb{C}}P^7$ induced by $\widetilde{\mathcal{L}}$ (the Segre
embedding) has $\tdef(Y) = 2$ (see e.g.~\cite{Te:dual-arxiv}).  A
straightforward computation shows that $i_{X,Y}(X) \subset Y$ is
indeed a hyperplane section corresponding to the embedding $Y \subset
{\mathbb{C}}P^7$ induced by $\widetilde{\mathcal{L}}$. If we denote by
$ [x_0:x_1]$ homogeneous coordinates on $\mathbb{C}P^1$ and by
$[w_0:w_1:w_2:w_3]$ homogeneous coordinates on ${\mathbb{C}}P^3$ then
$i_{X,Y}(X) \subset Y$ is given by the equation $$i_{X,Y}(X) =
\{x_0w_1 - x_1 w_0 = 0 \} \subset \mathbb{C}P^1 \times
{\mathbb{C}}P^3.$$ As for the defect of the projective embedding of
$X$, we have by~\eqref{eq:def-Sigma-X} that $\tdef(i_{X,Y}(X)) =
\tdef(Y) - 1 = 1$.

It remains to show that $i_{\Sigma,X}(\Sigma) \subset X$ is indeed a
hyperplane section corresponding to $\mathcal{L}$. Using the
coordinates $x, w$ just introduced, we can write a point in $X$ as
$([x_0:x_1], [w_0:w_1:w_2:w_3])$ with $(w_0, w_1) \in \mathbb{C}(x_0,
x_1)$, $w_2, w_3 \in \mathbb{C}$. A smooth hyperplane section of $X$
(with respect to $\mathcal{L}$, or alternatively with respect to the
embedding $X \subset {\mathbb{C}}P^6$) is given for example by the
equation
\begin{equation} \label{eq:Sig_0} \Sigma_0 = \{x_0w_2 - x_1 w_3 = 0\}.
\end{equation}
A simple computation shows that $\Sigma_0 = i_{\Sigma,X}(\Sigma)$.

Next we construct a pencil $\ell$ of divisors in the linear system
$|\mathcal{L}|$ lying in the complement of the dual variety $X^*$
(corresponding to the projective embedding induced by $\mathcal{L}$).
For this end, we first construct a linear embedding $$\iota' :
\mathcal{O}_{\mathbb{C}P^1}(-2) \longrightarrow
\mathcal{O}_{\mathbb{C}P^1}(-1) \oplus \mathcal{O}_{\mathbb{C}P^1}$$
as follows. Write elements of the fiber of
$\mathcal{O}_{\mathbb{C}P^1}(-1)$ over $[x_0:x_1] \in \mathbb{C}P^1$
as pairs $(v_0, v_1) \in \mathbb{C}(x_0,x_1) \subset \mathbb{C}^2$ (or
in coordinates: $v_0x_1 = v_1 x_0$). Similarly, write elements of
$\mathcal{O}_{\mathbb{C}P^1}(-2) = \mathcal{O}_{\mathbb{C}P^1}(-1)
\otimes \mathcal{O}_{\mathbb{C}P^1}(-1)$ as $\bigl( (u_0, u_1) \otimes
(u'_0, u'_1) \bigr)$ with $(u_0, u_1), (u'_0, u'_1) \in
\mathcal{O}_{\mathbb{C}P^1}(-1)$. Define
$$\iota' \bigl( (u_0, u_1) \otimes (u'_0, u'_1) \bigr) =
\bigl( (u_1 u'_0, u_1 u'_1), u_0u'_0 \bigr).$$ We get an embedding
$\mathcal{O}_{\mathbb{C}P^1}(-2) \oplus \mathcal{O}_{\mathbb{C}P^1}
\xrightarrow{\iota' \oplus \textnormal{id}}
\mathcal{O}_{\mathbb{C}P^1}(-1) \oplus \mathcal{P}_{\mathbb{C}P^1}
\oplus \mathcal{O}_{\mathbb{C}P^1}$ hence also an embedding
$$i': \mathbb{P}(\mathcal{O}_{\mathbb{C}P^1}(-2) \oplus 
\mathcal{O}_{\mathbb{C}P^1}) \longrightarrow X.$$ Put $\Sigma_1 =
i'(\mathbb{P}(\mathcal{O}_{\mathbb{C}P^1}(-2) \oplus
\mathcal{O}_{\mathbb{C}P^1}) \subset X$.

A simple computation shows that $\Sigma_1$ is given by the following
equation $$\Sigma_1 = \{ x_0 w_0 - x_1 w_2 = 0\} \subset X.$$ It
follows that $\Sigma_1$ lies in the linear system $|\mathcal{L}|$.
Consider now the pencil $\ell \subset |\mathcal{L}|$ generated by
$\Sigma_0$ (see~\eqref{eq:Sig_0}) and $\Sigma_1$.  A straightforward
computation shows that $\ell$ lies in the complement of the dual
variety $X^*$. Note that a generic element in the pencil $\ell$ is
isomorphic to $\mathbb{C}P^1 \times \mathbb{C}P^1$, however finitely
many elements in $\ell$ are isomorphic to the Hirzebruch surface $F_2
= \mathbb{P}(\mathcal{O}_{\mathbb{C}P^1}(-2) \oplus
\mathcal{O}_{\mathbb{C}P^1})$.

Denote by $s_{X} \in H_2(X;\mathbb{Z})$ the homology class represented
by the rational curve
$$\mathbb{P}(0 \oplus 0 \oplus \mathcal{O}_{\mathbb{C}P^1}) \subset X.$$
Denote by $f_X \in H_2(X;\mathbb{Z})$ the class represented by a
projective line lying in a fiber of the projective bundle $X
\longrightarrow \mathbb{C}P^1$. Clearly $(i_{\Sigma,X})_* (f) = f_X$
and $(i_{\Sigma,X})_*(s) = s_X + f_X$. (See~\eqref{eq:fs}.) The base
locus $B_{\ell}$ of the pencil $\ell$ is a smooth algebraic curve
whose homology class in $\Sigma$ is $[B_{\ell}] = s+2f$, and when
viewed in $X$ we have $$(i_{\Sigma,X})_*([B_{\ell}]) = s_X + 3f_X.$$

Denote by $\pi_{\ell}: \widetilde{X} \longrightarrow \ell$ the
fibration associated to $\ell$. We endow $\widetilde{X}$ with the
symplectic form $\widetilde{\Omega}$ as in~\S\ref{s:small-dual-fibr}.
We will now compute the Seidel element $S(\pi_{\ell})$.

Denote by $\omega_{_Y}$ the symplectic form on $Y$ induced from the
standard symplectic form on ${\mathbb{C}}P^7$ via the embedding $Y
\subset {\mathbb{C}}P^7$. Denote also $\omega_{_X} = i_{X,Y}^*
\omega_{_Y}$ and $\omega_{_{\Sigma}} = i_{\Sigma,X}^* \omega_{_X}$ the
induced forms on $X$ and $\Sigma$. A simple computation shows that
$[\omega_{_Y}] = \textnormal{pr}_1^*a + \textnormal{pr}_2^*b$, where
$a \in H^2(\mathbb{C}P^1;\mathbb{Z})$ and $b \in
H^2({\mathbb{C}}P^3;\mathbb{Z})$ are the positive generators. A
straightforward computation shows now that
$$[\omega_{_{\Sigma}}] = 2\alpha + \beta.$$

We now go back to the situation of Lemma~\ref{l:d<1} (which does not
hold in our case) and try to find the contribution of holomorphic
sections of $\pi_{\ell}: \widetilde{X} \longrightarrow \ell$ in the
class $\widetilde{A} = j(A) + d F$ with $d\leq 0$.

A simple computation shows that $j(s_X) \in H_2^{\pi_{\ell}}$ and that
$c_1^v(j(s_X)) = -1$. We also have (see~\eqref{eq:wtld-i})
$$\widetilde{i}_*(f) = j(f_X) - F, \quad 
\widetilde{i}_*(s) = j(s_X) + j(f_X) - 2F.$$ The degree of the Seidel
element $S(\pi_{\ell})$ is in our case $2$, and as $C_{\Sigma} = 2$ it
follows that the only classes $\widetilde{A}$ that might contribute to
$S(\pi_{\ell})$ are classes $\widetilde{A}$ that differ from $s_X$ by
an element coming from $\widetilde{i}_*:H_2(\Sigma;\mathbb{Z})
\longrightarrow H_2(\widetilde{X};\mathbb{Z})$ with zero
$c_1^{\Sigma}$. This means that
$$\widetilde{A} = j(s_X) + r\widetilde{i}_*(s-f) = 
(r+1)j(s_X) - rF, \; \; r \in \mathbb{Z}.$$ As $\widetilde{A}$
represents a holomorphic section we must have $0< \langle
[\widetilde{\Omega}], \widetilde{A} \rangle = r+2$, hence $r \geq -1$.

\begin{lem} \label{l:sect-r=-1-or-r=0}
   If $\mathcal{S}(\widetilde{A}) \neq 0$ then either $r=-1$ or $r=0$.
\end{lem}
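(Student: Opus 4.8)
The plan is to show that for $r\ge 1$ one has $\mathcal{S}(\widetilde{A})=0$, where $\widetilde{A}=(r+1)j(s_X)-rF$; together with the bound $r\ge -1$ established just before the lemma this gives $r\in\{-1,0\}$. Observe first that $\widetilde{A}=F+(r+1)\bigl(j(s_X)-F\bigr)=F+(r+1)\,\widetilde{i}_*(s-f)$, using $\widetilde{i}_*(s-f)=j(s_X)-F$ from the displays preceding the lemma. If $\mathcal{S}(\widetilde{A})\neq 0$ then $ev_{\widetilde{J},z_0}$ is a nonzero pseudocycle for a regular $\widetilde{J}$, so $\mathcal{M}^{\frak{s}}(\widetilde{A},\widetilde{J})\neq\emptyset$; I would pick regular $\widetilde{J}_\nu\to\widetilde{J}_0$ and $\widetilde{J}_\nu$-holomorphic sections $u_\nu$ in the class $\widetilde{A}$, and study their Gromov limits.

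The first and main step is to classify the stable maps in $\overline{\mathcal{M}}(\widetilde{A},\widetilde{J}_0)$. Such a limit consists of a principal $\widetilde{J}_0$-section $\widetilde{v}$ in a section class $j(A_0)+d_0F$ together with bubbles contained in fibres of $\pi_\ell$. Pushing forward by the blow-down $p$, the curve $p(\widetilde{v})$ and the $p$-images of the bubbles are $J_0$-holomorphic rational curves whose classes add up to $p_*\widetilde{A}=(r+1)s_X$. Since the cone of effective curves of the projective bundle $X=\mathbb{P}\bigl(\mathcal{O}_{\mathbb{C}P^1}(-1)\oplus\mathcal{O}_{\mathbb{C}P^1}^{\oplus 2}\bigr)\to\mathbb{C}P^1$ is spanned by the fibre line class $f_X$ and the minimal section class $s_X$, all these classes have vanishing $f_X$-component (the $f_X$-coefficients are $\ge 0$ and sum to $0$), so each is of the form $k\,s_X$, $k\ge 0$. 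Moreover an irreducible rational curve in a class $k\,s_X$ must lie in the sub-bundle $\Sigma'=\mathbb{P}\bigl(\mathcal{O}_{\mathbb{C}P^1}^{\oplus 2}\subset E\bigr)\cong\mathbb{C}P^1\times\mathbb{C}P^1$ (the pullback of the tautological line bundle to its normalization is $\mathcal{O}$ and cannot map into the $\mathcal{O}(-1)$-summand of $E$), and inside $\Sigma'$ such a curve must be one of the lines of class $s_X$, i.e. $\mathbb{C}P^1\times\{\mathrm{pt}\}$; hence $k=1$ on every irreducible component and every bubble is a multiple cover of such a ``horizontal'' line. A direct computation with the pencil $\ell$ generated by $\Sigma_0$ (see~\eqref{eq:Sig_0}) and $\Sigma_1$ then shows that exactly one member, $\Sigma_1\cong F_2$, contains a horizontal line, namely its $(-2)$-curve $C_0=\{w_0=w_1=w_2=0\}$, and that $C_0$ meets the base locus $B_\ell$ in the single point $b^*=([1:0],[0:0:0:1])$, all other members of $\ell$ being isomorphic to $\mathbb{C}P^1\times\mathbb{C}P^1$. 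Consequently each bubble is a cover of the proper transform $\widetilde{C}_0$ (whose class is $\widetilde{i}_*(s-f)=j(s_X)-F$) inside the fibre $\pi_\ell^{-1}(z_1)$ over the point $z_1\in\ell$ with $X\cap z_1=\Sigma_1$. For the principal section one has $A_0=k\,s_X$ with $h(A_0)+d_0=1$; if $k\ge 1$ then $p(\widetilde{v})$ is a cover of a horizontal line $L_0$, and since $\widetilde{v}$ is a section of $\pi_\ell$ one checks that either $L_0=C_0$, forcing $\widetilde{v}=\widetilde{u}_{b^*}$ and $A_0=0$, a contradiction, or $L_0\neq C_0$, in which case $L_0$ is disjoint from $B_\ell$ and $\widetilde{v}=L_0$ is a section of class $j(s_X)$ disjoint from $\widetilde{C}_0$ — impossible by connectedness of the stable map, as for $r\ge 1$ there is at least one bubble and it lies on $\widetilde{C}_0$. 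Hence $A_0=0$, $\widetilde{v}=\widetilde{u}_b$ for some $b\in B_\ell$ (Lemma~\ref{l:sect-class-F}), and connectedness with the bubbles, which lie on $\widetilde{C}_0\subset\pi_\ell^{-1}(z_1)$ and meet the fibrewise copy of $B_\ell$ only at the point over $b^*$, forces $b=b^*$. Thus every limit stable map is $\widetilde{u}_{b^*}$ glued at $\widetilde{C}_0\cap E=\widetilde{u}_{b^*}(z_1)$ to a genus-$0$ degree-$(r+1)$ cover of $\widetilde{C}_0$.

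The second step is to conclude. Choose the reference point $z_0\neq z_1$. Then for every stable map in $\overline{\mathcal{M}}(\widetilde{A},\widetilde{J}_0)$ the value at $z_0$ lies on the principal component $\widetilde{u}_{b^*}$, hence equals the single point $b^*\in\Sigma$. By Gromov compactness, for a regular $\widetilde{J}\in\widetilde{\mathcal{J}}_{\textnormal{reg}}(\pi_\ell,\widetilde{\Omega})$ sufficiently $C^\infty$-close to $\widetilde{J}_0$, the image and limit set of the pseudocycle $ev_{\widetilde{J},z_0}\colon\mathcal{M}^{\frak{s}}(\widetilde{A},\widetilde{J})\to\Sigma$ lie in an arbitrarily small ball around $b^*$; a pseudocycle supported in a ball is null-homologous, so $\mathcal{S}(\widetilde{A})=PD\bigl[ev_{\widetilde{J},z_0}(\mathcal{M}^{\frak{s}}(\widetilde{A},\widetilde{J}))\bigr]=0$, contradicting the assumption.

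The main obstacle is the bubbling analysis of the first step, and in particular the explicit verification, using the equations of the pencil, that $C_0$ is the unique horizontal line contained in a member of $\ell$ and that it meets $B_\ell$ exactly once — this is precisely where the specific pencil, rather than merely ``some pencil disjoint from $X^*$'', is used. A second delicate point is that $\widetilde{J}_0$ is \emph{not} regular: the curve $\widetilde{C}_0$ has normal bundle $\mathcal{O}_{\mathbb{C}P^1}(-2)\oplus\mathcal{O}_{\mathbb{C}P^1}$ in $\widetilde{X}$, so its multiple covers are obstructed and $\widetilde{J}_0\notin\widetilde{\mathcal{J}}_{\textnormal{reg}}(\pi_\ell,\widetilde{\Omega})$. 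One therefore cannot conclude $\mathcal{S}(\widetilde{A})=0$ simply from the absence of honest $\widetilde{J}_0$-sections in the class $\widetilde{A}$; the argument must instead exploit the stronger fact that the \emph{entire} Gromov boundary is collapsed to a point by $ev_{z_0}$, which is why the conclusion is drawn via the pseudocycle-in-a-ball argument rather than by a transversality statement for $\widetilde{J}_0$ itself.
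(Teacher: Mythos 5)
Your proof is correct and follows essentially the same strategy as the paper's: you classify the possible $\widetilde{J_0}$-limits of sections in the class $\widetilde{A}$ (ruling out honest sections and pinning the cusp sections down to a fiber of $E \to B_{\ell}$ glued to covers of the $(-2)$-curve in the special fiber) and then invoke Gromov compactness to contradict $\mathcal{S}(\widetilde{A})\neq 0$. The differences are cosmetic: you use the cone of effective curves of $X=\mathbb{P}\bigl(\mathcal{O}_{\mathbb{C}P^1}(-1)\oplus\mathcal{O}_{\mathbb{C}P^1}^{\oplus 2}\bigr)$ where the paper projects to ${\mathbb{C}}P^3$ through $Y$, and you conclude by noting that the evaluation pseudocycle is supported in a small ball (hence null-homologous) rather than by choosing a $2$-cycle $Q$ avoiding the finitely many cusp-section evaluation points.
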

We postpone the proof for later in this section and continue with the
proof of Theorem~\ref{t:ex-cp1xcp1-2}.

The case $r=-1$ is when $\widetilde{A} = F$. This has already been
treated at the beginning of~\S\ref{s:non-monotone} and we have
$\mathcal{S}(F) = [\omega_{_{\Sigma}}] = 2\alpha + \beta$.

We turn to the case $r=0$, i.e. $\widetilde{A} = j(s_X)$. Denote by
$\widetilde{J_0}$ the standard complex structure on $\widetilde{X}
\subset X \times \ell$, namely the complex structure induced from the
standard complex structure $J_0 \oplus i$ on $X \times \ell$. 

Consider for every $[u:v] \in \mathbb{C}P^1$ the $J_0$-holomorphic
rational curve $\sigma_{[u:v]} : \mathbb{C}P^1 \longrightarrow X$
defined by $\sigma_{[u:v]}([x_0:x_1]) = [0:u:v]$, where $[0:u:v]$ on
the right-hand side lies in the fiber over $[x_0:x_1]$, i.e.
$\sigma_{[u:v]}$ is a section of the projective bundle $X \to
\mathbb{C}P^1$ and $[\sigma_{[u:v]}] = s_X$.  In the family of curves
$\{ \sigma_{\xi} \}_{\xi \in \mathbb{C}P^1}$ there are precisely two
curves, say $\sigma_{\xi'}$ and $\sigma_{\xi''}$, that intersect the
base locus $B_{\ell}$ of the pencil $\ell$. (We assume here that the
pencil $\ell$ was chosen to be generic.) Thus, each of the curves
$\sigma_{\xi}$, $\xi \in \mathbb{C}P^1 \setminus \{\xi', \xi''\}$,
lifts to a holomorphic curve $\widetilde{\sigma}_{\xi}: \mathbb{C}P^1
\longrightarrow \widetilde{X}$.  (Recall that $\widetilde{X}$ is the
blow up of $X$ along the base locus $B_{\ell}$.) The homology class of
$\widetilde{\sigma}_{\xi}$ is $j(s_X)$. A simple computation shows
that each $\widetilde{\sigma}_{\xi}$, $\xi \neq \xi', \xi''$, is a
section of the fibration $\pi_{\ell} : \widetilde{X} \longrightarrow
\ell \cong \mathbb{C}P^1$ and moreover these are all the
$\widetilde{J_0}$-holomorphic sections in the class $j(s_X)$.

It is interesting to note that the moduli space $\mathcal{M}^{\frak
  s}(\widetilde{A}, \widetilde{J_0})$ of sections in the class
$\widetilde{A} = j(s_X)$ is not compact. Indeed $\mathcal{M}^{\frak
  s}(\widetilde{A}, \widetilde{J_0})$ can be identified with
$\mathbb{C}P^1 \setminus \{ \xi', \xi'' \}$ and as $\xi \to \xi'$ or
$\xi''$ we obtain bubbling in the fiber. More precisely, when $\xi \to
\xi_*$ with $\xi_* \in \{\xi', \xi''\}$ the sections
$\widetilde{\sigma}_{\xi}$ converge to a reducible curve consisting of
two components: the first is a $\widetilde{J_0}$-holomorphic section
in the class $F$ and the other one is a holomorphic curve in the class
$\widetilde{i}_*(s-f)$ lying in one of the fibers of $\pi_{\ell}$. The
latter is a $(-2)$--curve hence this can occur only in one of the
fibers that is isomorphic to the Hirzebruch surface $F_2$ (obviously
not in any of the fibers that are isomorphic to $\mathbb{C}P^1 \times
\mathbb{C}P^1$, since those are Fano).

The corresponding evaluation map $\mathcal{M}^{\frak s}(\widetilde{A},
\widetilde{J_0}) \longrightarrow \Sigma$ gives a pseudo-cycle whose
homology class is $s \in H_2(\Sigma;\mathbb{Z})$. Moreover for each
$\widetilde{\sigma}_{\xi} \in \mathcal{M}^{\frak s}(\widetilde{A},
\widetilde{J_0})$ we have $$\widetilde{\sigma}_x^* (T \widetilde{X})
\cong \mathcal{O}_{\ell}(2) \oplus \mathcal{O}_{\ell}(-1) \oplus
\mathcal{O}_{\ell}$$ hence by the regularity criterion
from~\cite{McD-Sa:Jhol-2} (see Lemma~3.3.1 in that book)
$\widetilde{J_0}$ is regular for all the elements in
$\mathcal{M}^{\frak s}(\widetilde{A}, \widetilde{J_0})$. Consequently
we have: $$\mathcal{S}(\widetilde{A}) = PD(s) = \beta.$$ Putting
everything together we see that
$$S(\pi_{\ell}) = [\omega_{_{\Sigma}}]T^{\widetilde{\Omega}(F)} + 
\beta T^{\widetilde{\Omega}(j(S_X))} = (2\alpha + \beta)T + \beta
T^2.$$ The proof of Theorem~\ref{t:ex-cp1xcp1-2} is now complete
modulo the proof of Lemma~\ref{l:sect-r=-1-or-r=0}.  \Qed

\begin{proof}[Proof of Lemma~\ref{l:sect-r=-1-or-r=0}]
   Write $\widetilde{A} = (r+1)j(s_X) - rF$. We have
   $\widetilde{\Omega}(\widetilde{A}) = r+2$. Since
   $[\widetilde{\Omega}]$ is an integral cohomology class we must have
   $r\geq -1$. Thus we have to prove that it is impossible to have
   $\mathcal{S}(\widetilde{A},\widetilde{J}) \neq 0$ for generic
   $\widetilde{J}$ if $r \geq 1$.

   \subsubsection*{Claim 1.} {\sl There exist no
     $\widetilde{J_0}$-holomorphic sections in the class
     $\widetilde{A}$ for $r\geq 1$.}

   Indeed, assume by contradiction that $\widetilde{u}_0$ is such a
   section. Recall that we have the blow down projection
   $p:\widetilde{X} \longrightarrow X$ which is $(\widetilde{J}_0,
   J_0)$-holomorphic.  Therefore $p \circ \widetilde{u}_0$ is a
   $J_0$-holomorphic curve in $X$. Its homology class is $(r+1)s_X$.
   Recall that we also have the inclusion $i_{X,Y}: X \longrightarrow
   Y = \mathbb{C}P^1 \times {\mathbb{C}}P^3$ which is holomorphic.
   Hence $u: = i_{X,Y} \circ p \circ \widetilde{u}_0$ is a holomorphic
   curve in the class $(r+1)[\mathbb{C}P^1 \times \textnormal{pt}] \in
   H_2(Y;\mathbb{Z})$. As the projection $\textnormal{pr}_2 : Y
   \longrightarrow {\mathbb{C}}P^3$ is holomorphic it follows that $u$
   is an $(r+1)$-multiply covered curve whose image is $\mathbb{C}P^1
   \times \textnormal{pt}$. Note that $\widetilde{u}_0: \mathbb{C}P^1
   \longrightarrow \widetilde{X}$ is injective (since it is a section)
   and it is easy to see (by an explicit calculation) that
   $\widetilde{u}_0$ cannot be entirely contained inside the
   exceptional divisor $E$ (of the blow up $p: \widetilde{X}
   \longrightarrow X$).  Therefore $p \circ \widetilde{u}_0$ must be
   generically injective (i.e. it restricts to an injective map over
   an open dense subset of $\mathbb{C}P^1$), and the same should hold
   also for $u$. But we have seen that $u$ is $(r+1)$-multiply covered
   curve. We thus obtain a contradiction if $r \geq 1$. This proves
   Claim~1.

   Next, consider $\widetilde{J_0}$-holomorphic {\em cusp sections} in
   the class $\widetilde{A}$. By this we mean a bubble tree of
   $\widetilde{J_0}$-holomorphic curves $(\widetilde{u}_0, v_1, \ldots
   , v_q)$ consisting of one holomorphic section $\widetilde{u}_0$
   together with holomorphic rational curves $v_i:\mathbb{C}P^1
   \longrightarrow \widetilde{X}$ each lying in a fiber of
   $\pi_{\ell}$. Moreover we have
   \begin{equation} \label{eq:cusp-sect}
      [\widetilde{u}_0] + [v_1] + \cdots + [v_q] = \widetilde{A},
   \end{equation}
   where $[\widetilde{u}_0] = (\widetilde{u}_0)_*([\mathbb{C}P^1])$
   and similarly for the $[v_i]$'s. Note however, that some of the
   $v_i$'s might not be reduced, i.e.  they might be multiply covered.
   
   \subsubsection*{Claim 2.} {\sl For $r \geq 1$ the only possible
     cusp sections $(\widetilde{u}_0, v_1, \ldots, v_q)$ in the class
     $\widetilde{A}$ are such that $\widetilde{u}_0$ is a curve in the
     class $F$ which is a fiber of the fibration $p|_E:E
     \longrightarrow B_{\ell}$ and each $v_i$ being a (possible
     multiple cover of a) $(-2)$--curve in its corresponding fiber of
     $\pi_{\ell}$. (Thus the bubbles $v_i$ may occur only in the
     special fibers of $\pi_{\ell}$ which are Hirzebruch surfaces
     different than $\mathbb{C}P^1 \times \mathbb{C}P^1$.)  Moreover,
     there exist only finitely many cusp sections in the class
     $\widetilde{A}$.}

   To prove this, let $(\widetilde{u}_0, v_1, \ldots, v_q)$ be such a
   cusp section. We first claim that $\widetilde{u}_0$ has its image
   inside the exceptional divisor $E$. Indeed, suppose otherwise, and
   consider the curve $u: = i_{X,Y} \circ p \circ \widetilde{u}_0$ as
   well as the curves $w_i: = i_{X,Y} \circ p \circ v_i$. The sum of
   the curves $u$ and $w_1, \ldots, w_q$ represent together the class
   $(r+1)j(s_X)$.  As in the proof of Claim~1 it follows that the
   union of their images is a curve of the type $(r+1)(\mathbb{C}P^1
   \times \textnormal{pt})$. By assumption $u$ is not constant hence
   $u$ is a curve of the type $m(\mathbb{C}P^1 \times
   \textnormal{pt})$, and as $\widetilde{u}_0$ is a section the curve
   $u$ must be reduced (as $p$ is $1-1$ outside of $E$). So $m=1$.
   Similarly each of the curves $w_i: = i_{X,Y} \circ p \circ v_i$
   must be either constant or with the same image as $u$, i.e.
   $\mathbb{C}P^1 \times \textnormal{pt}$ (but $w_i$ might be a
   multiple cover of $u$). As $r\geq 1$ at least one of the $w_i$'s is
   not constant.  However this is impossible since each of the curves
   $v_i$ lies in a fiber of $\pi_{\ell}$ hence the images of the
   non-constant $w_i$'s cannot coincide with that of $u$.  A
   contradiction. This proves that the image of $\widetilde{u}_0$ lies
   inside $E$.

   Next we have to prove that $\widetilde{u}_0$ is one of the fibers
   of the projective bundle $E \longrightarrow B_{\ell}$ (hence
   represents the class $F$). Indeed assume the contrary, then the
   projection $p \circ \widetilde{u}_0$ is not constant and, as
   $\widetilde{u}_0$ is contained inside $E$, $p \circ
   \widetilde{u}_0$ must be the base locus $B_{\ell}$ or a multiple
   cover of it (recall that $B_{\ell}$ is the center of the blow-up
   $p: \widetilde{X} \longrightarrow X$). Recall that $[B_{\ell}] =
   s_X + 3 f_X$. But by the preceding arguments $p \circ
   \widetilde{u}_0$ must be a curve in the class $s_X$ or a multiple
   of it.  A contradiction.  This proves that $p \circ
   \widetilde{u}_0$ is constant. It follows that $\widetilde{u}_0$ is
   either a fiber of $p|_{E}: E \longrightarrow B_{\ell}$ or a
   multiple of it.  But $\widetilde{u}_0$ is a injective (because it
   is a section of $\pi_{\ell}$), so it is precisely a fiber of $E
   \longrightarrow B_{\ell}$.

   Next we prove that each of the bubbles $v_i$ is a $(-2)$--curve or
   a multiple of it. To see this note that each $v_i$ when viewed as a
   curve in a fibre of $\pi_{\ell}$ must be in the homology class $a_i
   s - b_i f$ with $a_i, b_i \in \mathbb{Z}$ and a simple computation
   shows that
   $$\widetilde{i}_*([v_i]) = a_i j(s_X) + (a_i-b_i)j(f_X) - (2a_i -
   b_i)F.$$ This implies that $a_i \geq b_i$. But we also have
   $\sum_{i=1}^q (a_i-b_i) = 0$ because of~\eqref{eq:cusp-sect}, hence
   $a_i = b_i$ for every $i$. It follows that each $v_i$ is a curve in
   the class $a_i(s-f)$. As $\langle c_1^{\Sigma}, s-f \rangle = 0$ it
   easily follows by adjunction that $[v_i]$ is an $a_i$-cover of a
   $(-2)$--curve in the class $s-f$.

   Finally, we prove that we have only a finite number of cusp
   sections in the class $\widetilde{A}$. Inside the pencil $\ell$ we
   have only a finite number of elements $z \in \ell$ over which the
   fiber $\Sigma_{z} = \pi_{\ell}^{-1}(z)$ is not biholomorphic to
   $\mathbb{C}P^1 \times \mathbb{C}P^1$ (namely the ones that are
   Hirzebruch surfaces $F_2$). Inside each of these there is a unique
   $(-2)$-curve say $C_z$. Consider the evaluation map
   $$ev_{\widetilde{J_0},z} :
   \mathcal{M}^{\frak s}(F, \widetilde{J_0}) \longrightarrow \Sigma_z,
   \quad ev_{\widetilde{J_0},z}(\widetilde{u}) = \widetilde{u}(z).$$
   It is easy to see that the image of $ev_{\widetilde{J_0},z}$ is
   just the base locus $B_{\ell}$ (or more precisely its image inside
   the proper transform of $\Sigma_z$ inside $\widetilde{X}$). As
   $B_{\ell}$ is an irreducible curve in $\Sigma_z$ with positive self
   intersection it intersects $C_z$ at finitely many points. Therefore
   the number of cusp sections in the class $\widetilde{A}$ is finite.
   This completes the proof of Claim~2.

   We are now ready to complete the proof of the lemma. Suppose by
   contradiction that $\mathcal{S}(\widetilde{A}) \neq 0$. Consider
   the fiber $\Sigma_0$ of $\pi_{\ell}$, say lying over the point $z_*
   \in \ell$ (recall that $\Sigma_0$ is isomorphic to $\mathbb{C}P^1
   \times \mathbb{C}P^1$). Consider all possible
   $\widetilde{J}_0$-holomorphic cusp sections in the class
   $\widetilde{A}$. By Claim~2 we have only a finite number of them
   and each of them intersects $\Sigma_0$ exactly at one point (the
   bubbles cannot be inside $\Sigma_0$ as they are all
   $(-2)$--curves). We thus obtain a finite number of points $p_1,
   \ldots, p_{\nu} \in \Sigma_0$. As $\mathcal{S}(\widetilde{A}) \neq
   0$ we can find a real $2$-dimensional cycle (actually a real smooth
   closed surface) $Q \subset \Sigma_0$ lying in the complement of
   $p_1, \ldots, p_{\nu}$ and such that $\langle
   \mathcal{S}(\widetilde{A}),[Q] \rangle \neq 0$.  This implies that
   for every regular almost complex structure $\widetilde{J} \in
   \widetilde{\mathcal{J}}_{\textnormal{reg}}(\pi,
   \widetilde{\Omega})$ we have a $\widetilde{J}$-holomorphic section
   $\widetilde{u}$ in the class $\widetilde{A}$ which intersects $Q$.
   Take a sequence $\widetilde{J}_n \in
   \widetilde{\mathcal{J}}_{\textnormal{reg}}(\pi,
   \widetilde{\Omega})$ with $\widetilde{J}_n \longrightarrow
   \widetilde{J_0}$ as $n \longrightarrow \infty$ and a corresponding
   sequence $\widetilde{u}_n \in \mathcal{M}^{\frak s}(\widetilde{A},
   \widetilde{J}_n)$ with $\widetilde{u}_n(z_*) \in Q$. By Gromov
   compactness the sequence $\widetilde{u}_n$ either has a subsequence
   that converges to a genuine $\widetilde{J_0}$-holomorphic section
   in the class $\widetilde{A}$ or there is a subsequence that
   converges to a $\widetilde{J_0}$-holomorphic cusp section
   $(\widetilde{u}_0, v_1, \ldots, v_q)$ in the class $\widetilde{A}$,
   and by our construction we must have $\widetilde{u}_0(z_*) \in Q$.
   However, both cases are impossible. The first case is ruled out by
   Claim~1 and the second case is impossible since $Q$ lies in the
   complement of $p_1, \ldots, p_{\nu}$. The proof of the lemma is now
   complete.
\end{proof}

\section{Discussion and questions} \cntrs
\label{S:discussion}

Here we briefly discuss further directions of study arising from the
results of the paper.

\subsection{Questions on the symplectic topology of manifolds with
  small dual} \cntrsb
\label{sb:discuss-symp-top}

Consider the class of manifolds $\Sigma$ that appear as hyperplane
sections of manifolds $X$ with small dual, viewed as symplectic
manifolds. Does the group of Hamiltonian diffeomorphisms
$\textnormal{Ham}(\Sigma)$ of such manifolds $\Sigma$ have sepecial
properties (from the Geomeric, or algebraic viewpoints) ?  This
question seems very much related to the subcriticality of $X \setminus
\Sigma$, and results in this direction have been recently obtained by
Borman~\cite{Bor:symp-red} who found a relation between
quasi-morphisms on $\textnormal{Ham}(\Sigma)$ and quasi-morphisms on
$\textnormal{Ham}(X)$.

The structure of the fundamental group $\pi_1(\textnormal{Ham})$ of
the group of Hamiltonian diffeomorphisms of a symplectic manifold has
been the subject of many studies in symplectic topology. Still,
relatively little is known about the structure of these fundamental
groups. (e.g. the pool of known examples of symplectic manifolds with
non-simply connected $\textnormal{Ham}$ is quite limited.) It would be
interesting to ask whether manifolds with small dual and their
hyperplane sections exhibit special properties in terms of
$\pi_1(\textnormal{Ham})$ or more generally in terms of the topology
of $\textnormal{Ham}$.

Here are more concrete questions in this direction. Let $X \subset
{\mathbb{C}}P^N$ be a manifold with small dual. Denote $k = \tdef(X)$
and let $\Sigma \subset X$ be a smooth hyperplane section, endowed
with the symplectic structure $\omega_{_{\Sigma}}$ induced from
${\mathbb{C}}P^N$. Denote by $\lambda \in
\pi_1(\textnormal{Ham}(\Sigma, \omega_{_{\Sigma}})$ the non-trivial
element coming from the fibration in~\S\ref{s:small-dual-fibr} using
the recipe of~\S\ref{sbsb:seidel-ham-loops}.
\begin{itemize}
  \item What can be said about the minimal Hofer length of the loops
   in $\textnormal{Ham}(\Sigma, \omega_{_{\Sigma}})$ in the homotopy
   class $\lambda$ ? More generally, what can be said in general about
   the length spectrum of $\textnormal{Ham}(\Sigma,
   \omega_{_{\Sigma}})$ with respect to the Hofer metric ?
   Preliminary considerations seem to indicate that at least when
   $b_2(X)=1$ the positive part of the norm of $\lambda \in
   \pi_1(\textnormal{Ham}(\Sigma, \omega_{_{\Sigma}})$ satisfies
   $\nu_+(\lambda) \leq \frac{1}{\dim_{\mathbb{C}}(\Sigma)+1}$. It
   would be interesting to verify this, and more importantly to obtain
   a bound on $\nu(\lambda)$. (See~\cite{Po:ham-book, Po:K-area} for
   the definition of these norms on $\pi_1(\textnormal{Ham})$ and ways
   to calculate them.)
  \item Can the homotopy class $\lambda$ be represented by a
   Hamiltonian circle action ? Several examples of manifolds with
   small dual indicate that this might be true. In case a Hamiltonian
   circle action does exist, is it true that it can be deformed into a
   holomorphic circle action (i.e. an action of $S^1$ by
   biholomorphisms of $\Sigma$) ?
  \item In which cases is the element $\lambda$ of finite order ?
   Whenever this is the case, does the order of $\lambda$ has any
   relation to $k = \tdef(X)$ ?
  \item In case the order of $\lambda$ is infinite, what can be said
   about the value of the Calabi homomorphism $\widetilde{Cal}$ on
   $\lambda$ ? (We view here $\lambda$ as an element of the universal
   cover $\widetilde{\textnormal{Ham}}(\Sigma, \omega_{_{\Sigma}})$.)
   See~\cite{En-Po:calqm} for the definition of $\widetilde{Cal}$ etc.
\end{itemize}

Of course, one could ask the same questions also about $X$ itself
(rather than $\Sigma$). It is currently not known what are the precise
conditions insuring that an algebraic manifold $X$ with small dual can
be realized as a hyperplane section in an algebraic manifold $Y$ (of
one dimension higher).

Another question, lying at the border between symplectic topology and
algebraic geometry is the following. The main results of this paper
show that an algebraic manifold $X \subset {\mathbb{C}}P^N$ with small
dual and $b_2(X)=1$ gives rise to a distinguished {\em non-trivial}
element $\lambda \in \pi_1(\textnormal{Ham}(\Sigma))$ where $\Sigma$
is a hyperplane section of $X$. On the other hand every homotopy class
of loops $\gamma \in \pi_1(\textnormal{Ham}(\Sigma))$ gives rise to a
Hamiltonian fibration $\pi_{\gamma}: \widetilde{M}_{\gamma}
\longrightarrow S^2$ with fiber $\Sigma$. Consider now (positive as
well as negative) iterates $\gamma = \lambda^r$, $r \in \mathbb{Z}$,
of $\lambda$ and the Hamiltonian fibrations corresponding to them
$\pi_{\lambda^r}: \widetilde{M}_{\lambda^r} \longrightarrow S^2$. Do
these fibrations correspond to an embedding of $\Sigma$ as a
hyperplane section in some algebraic manifold with positive defect ?
Or more generally, do the fibrations $\pi_{\lambda^r}$ correspond to
some geometric framework involving the algebraic geometry of $\Sigma$
and its projective embeddings ?  It seems tempting to suspect that
$\lambda^2$ for example corresponds to a fibration similar to
$\pi_{\ell} : \widetilde{X} \longrightarrow \ell \approx S^2$
(see~\S\ref{s:small-dual-fibr}) but instead of taking $\ell$ to be a
line in the complement of $X^*$ one takes $\ell$ to be a degree $2$
curve in the complement of $X^*$.

Finally, here is another general question motivated by analogies to
algebraic geometry. Can the concept of manifolds with small dual be
generalized to symplectic manifolds ? Can one define a meaningful
concept of defect ? The motivation comes from the following framework.
Let $(X, \omega)$ be a closed integral symplectic manifold (integral
means that $[\omega]$ admits a lift to $H^2(X;\mathbb{Z})$). By a
theorem of Donaldson~\cite{Do:hyperplane} $X$ admits symplectic
hyperplane sections, i.e. for $k \gg 0$ there exists a symplectic
submanifold $\Sigma$ representing the Poincar\'{e} dual to
$k[\omega]$. (Moreover, the symplectic generalization of the notion of
Lefschetz pencil, exists too~\cite{Do:lefshetz}.) Suppose now that for
some such $\Sigma$ the manifold $X \setminus \Sigma$ is subcritical.
Does this imply on $\Sigma$ and $X$ results similar to what we have
obtained in this paper ? (e.g. is $[\omega|_{\Sigma}]$ invertible in
$QH(\Sigma)$ ?)  One of the difficulty in this type of questions is
that the concept of dual variety (of a projective embedding, or of a
linear system) does not exist in the realm of symplectic manifolds and
their Donaldson hyperplane sections. Note that we are not aware of
examples of pairs $(X, \Sigma)$ with $X \setminus \Sigma$ subcritical
that are not equivalent (e.g.  symplectomorphic) to algebraic pairs
$(X', \Sigma')$.

\subsection{Questions about the algebraic geometry of manifolds with
  small dual} \cntrsb \label{sb:discus-alg-geom} We have seen that for
hyperplane sections $\Sigma$ of manifolds with small dual $X \subset
{\mathbb{C}}P^N$, $[\omega_{_{\Sigma}}] \in QH^2(\Sigma;\Lambda)$ is
invertible. Is the same true for $X$, i.e. is $[\omega_X] \in QH^2(X;
\Lambda)$ an invertible element ?  The $2$-periodicity of the Betti
number of $X$ in Corollary~\ref{mc:periodicity} indicates that this
might be the case. Note that our proof of the $2$-periodicity for $X$
was based on the $2$-periodicity for $\Sigma$ (which in turn comes
from the invertibility of $[\omega_{_{\Sigma}}]$), together with some
Lefschetz-type theorems, and did not involve any quantum cohomology
considerations for $X$.

Another circle of questions has to do with
Theorem~\ref{t:gnrl-S-pi-ell}. It would be interesting to figure out
more explicitly the terms with $d \leq 0$ in
formula~\eqref{eq:S-gnrl}. This might be possible to some extent of
explicitness using Mori theory in the special case of manifolds with
small dual (see e.g.~\cite{Be-Fa-So:discr, Te:dual,
  Belt-Som:adjunction} and the references therein). In the same spirit
it would be interesting to see if there are any topological
restrictions on $\Sigma$ and $X$ coming from the invertibility of
$S(\pi_{\ell})$ in the non-monotone case. We remark that when
$(\Sigma, \omega_{_{\Sigma}})$ is not spherically monotone one should
work with a more complicated Novikov ring $\mathcal{A}$ as explained
in~\S\ref{s:non-monotone}.

Another interesting question has to do with the structure of the
quantum cohomology $QH^*(\Sigma;\Lambda)$ of hyperplane sections
$\Sigma$ of manifolds with small dual $X$. As a corollary of
Theorem~\ref{mt:om-inv} we have obtained that in the monotone case
$QH^{\ast}(\Sigma;\Lambda)$ satisfies the relation
$[\omega_{_{\Sigma}}] \ast \alpha = q$ for some $\alpha \in
H^{n+k-2}(\Sigma)$. In some examples this turns out to be the only
relation. Thus it is tempting to ask when do we have a ring
isomorphism
$$QH^{\ast}(\Sigma ; \Lambda) \cong
\frac{(H^{\bullet}(\Sigma) \otimes \Lambda)^*}{\left < \omega \ast
     \alpha = q \right >}.$$

In a similar context, it is interesting to note that the algebraic
structure of quantum cohomology of uniruled manifolds has been studied
in a recent paper of McDuff~\cite{McD:Ham_U}. In particular,
in~\cite{McD:Ham_U} McDuff proves a general existence result for
non-trivial invertible elements of the quantum cohomology of uniruled
manifolds using purely algebraic methods. One can view part of the
results in this paper as a direct computation in the case of manifolds
with positive defect.

\bibliography{bibliography}

\end{document}